       \newcommand{\ttfl} {\tfrac{2^{-2\ell}\ux-\uy}{t}}
\newtheorem{thm}{Theorem}[section]
\newtheorem{lemma}[thm]{Lemma}
\newtheorem{corollary}[thm]{Corollary}
\newtheorem{proposition}[thm]{Proposition}
\numberwithin{equation}{section}
\theoremstyle{definition}
\newtheorem{rem}[thm]{Remark}
\theoremstyle{definition}
\newtheorem{rems}[thm]{Remarks}
\newcommand{\Be}{\begin{equation}}
\newcommand{\Ee}{\end{equation}}
\newcommand{\Bea}{\begin{align}}
\newcommand{\Eea}{\end{align}}
\newcommand{\Beas}{\begin{align*}}
\newcommand{\Eeas}{\end{endalign*}}
\newcommand{\Benu}{\begin{enumerate}}
\newcommand{\Eenu}{\end{enumerate}}
\newcommand{\Bi}{\begin{itemize}}
\newcommand{\Ei}{\end{itemize}}
\newcommand{\np}{\text{NP}}
\newcommand{\eq}{\text{Eq}}
\newcommand{\im}{\text{IM}}
\newcommand\bbone{{\mathbbm 1}}
\def\th2{{\theta_{2n}}}
\def\th3{\overline{\theta}}
\def\uw{{\underline w}}
\def\intslash{\rlap{\kern  .32em $\mspace {.5mu}\backslash$ }\int}
\def\qsl{{\rlap{\kern  .32em $\mspace {.5mu}\backslash$ }\int_{Q_x}}}
\def\R{\mathbb R}
\def\emph#1{{\it #1 }}
\def\rank{{\text{\rm rank }}}
\def\supp{{\text{\rm supp }}}
\def\inn#1#2{\langle#1,#2\rangle}
\def\lc{\lesssim}
\def\gc{\gtrsim}
             \def\La{\Lambda}
\def\om{\omega}
\def\fI{{\mathfrak {I}}}
\def\fJ{{\mathfrak {J}}}
\def\fM{{\mathfrak {M}}}
\def\fS{{\mathfrak {S}}}
\def\fk{{\mathfrak {k}}}
\def\fn{{\mathfrak {n}}}
\def\fx{{\mathfrak {x}}}
\def\bbC{{\mathbb {C}}}
\def\bbH{{\mathbb {H}}}
\def\bbN{{\mathbb {N}}}
\def\bbR{{\mathbb {R}}}
\def\bbZ{{\mathbb {Z}}}
\def\cJ{{\mathcal {J}}}
\def\cK{{\mathcal {K}}}
\def\cM{{\mathcal {M}}}
\def\cR{{\mathcal {R}}}
\def\cS{{\mathcal {S}}}
\def\cT{{\mathcal {T}}}
\def\cU{{\mathcal {U}}}
\def\cZ{{\mathcal {Z}}}
\def\be#1{\begin{equation}\label{#1}}
\def\endeq{\end{equation}}
\def\endal{\end{align}}
\def\bas{\begin{align*}}
\def\eas{\end{align*}}
\def\bi{\begin{itemize}}
\def\ei{\end{itemize}}
\newcommand{\ubar}[1]{\stackunder[1.2pt]{$#1$}{\rule{.9ex}{.075ex}}}
\newcommand{\oa}{\bar{\alpha}}
\newcommand{\ua}{\ubar{\alpha}}
\newcommand{\ox}{{\bar x}}
\newcommand{\oy}{{\bar y}}
\newcommand{\ux}{{\ubar x}}
\newcommand{\uy}{{\ubar y}}
\newcommand{\uom}{{\ubar \omega}}
\newcommand{\oom}{{\bar \omega}}
\newcommand{\oH}{\bar{H}}
\begin{document}
\title
[On the Kor\'anyi Spherical  maximal function on Heisenberg groups]{On the Kor\'anyi Spherical  maximal function on Heisenberg groups}
\author[ R. Srivastava]{Rajula Srivastava}
\address{Rajula Srivastava: Department of Mathematics, University of Wisconsin, 480 Lincoln Drive, Madison, WI, 53706, USA.}
\email{rsrivastava9@wisc.edu}
\maketitle
\begin{abstract}
We prove $L^p\to L^q$ estimates for the local maximal operator associated with dilates of the K\'oranyi sphere in Heisenberg groups. These estimates are sharp up to endpoints and imply new bounds on sparse domination for the corresponding global maximal operator. We also prove sharp $L^p\to L^q$ estimates for spherical means over the Kor\'anyi sphere, which can be used to improve the sparse domination bounds in \cites{GangulyThangavelu} for the associated lacunary maximal operator. 
\end{abstract}
\section{Introduction}
Let $\bbH^n=\bbR^{2n} \times \bbR$ be the Heisenberg group of real Euclidean dimension $2n+1$. We shall use the notation $x=(\ux, \ox)$, $y=(\uy, \oy)$, with $\ux, \uy\in \bbR^{2n}$ and $\ox, \oy\in\bbR$, to denote elements of $\bbH^n$. The group law is given by 
\[x \cdot y =(\ux+\uy, \ox+\oy + \tfrac{1}{2}\ux^\intercal J\uy), \]
where \[J:=\begin{pmatrix}
0 & & I_n\\
-I_n &  & 0
\end{pmatrix}\]
is the $2n\times 2n$ standard symplectic matrix (and $I_n$ is the $n\times n$ identity matrix).
Further, the Kor\'anyi norm of an element $x$ is defined to be \[|x|_K:=(|\ux|^4+|\ox|^2)^{\frac{1}{4}}.\] This norm is homogeneous of degree one with respect to the natural parabolic dilation structure $\delta_t((\ux,\ox)):=(t\ux,t^2 \ox)$ on $\bbH^n$. Let $S_K$ be the sphere centred at the origin, of radius one with respect to the Kor\'anyi norm.  
There exists a unique Radon measure $\mu$ on $S_K$ induced by the Haar measure on $\bbH^n$. 
For $t>0$, the averaging operator associated with the $\mu_t$, the $t$-dilate of this measure, is given by
\begin{equation}
    \label{eq:koravg}
    \mathcal{A}_tf(x):=f*\mu_t(x)=\int_{S_K} f(\ux-t\ubar{\omega},\ox-t^2\oom-\tfrac{t}{2} 
\ux^\intercal J\ubar{\omega} ))\,d\mu((\ubar{\omega},\oom))
\end{equation}
and let $\mathcal{A}f(x,t):=\mathcal{A}_tf(x)$.
\begin{figure}
    \centering
    \captionsetup{justification=centering}
    \includegraphics[width=.4\linewidth,height=.4\textheight,keepaspectratio]{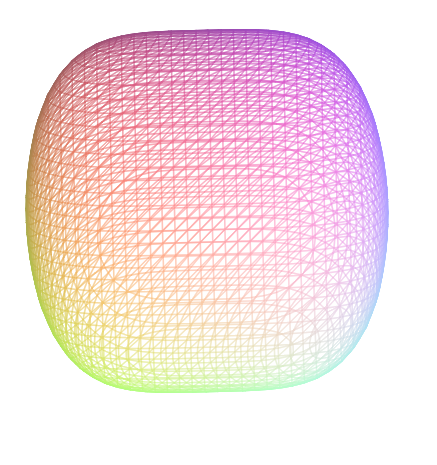}
    \caption{The Kor\'anyi sphere centred at the origin in $\bbH^1$}
    \label{fig:Koranyi}
\end{figure}

In 1981, Cowling \cites{Cowling1980} considered the global Kor\'anyi maximal function \[\mathfrak{M} f(x):=\sup_{t>0}|\mathcal{A}_t f(x)|.\] He showed that $\mathfrak{M}$ is bounded on $L^p(\bbH^n)$ for all $p>\frac{2n+1}{2n}$ and $n\geq 2$.  More recently, in \cite{GangulyThangavelu}, Ganguly and Thangavelu considered the lacunary variant \[\mathfrak{M}^{\text{lac}}f(x):=\sup_{k>0}|\mathcal{A}_{2^k} f(x)|\] on $\bbH^n$ for $n\geq 2$ and proved that $\mathfrak{M}^{\text{lac}}$ is bounded on $L^p$ for all $1<p<\infty$. They did so by establishing a $(p,q')$-sparse domination result for $\mathfrak{M}^{\text{lac}}$. Closely following Lacey's approach in the Euclidean case \cites{laceyJdA19}, they made use of an induction argument relying on $L^p\to L^q$ estimates for spherical means over the unit Kor\'anyi sphere \[\mathcal{A}_1f(x)=f*\mu(x),\] a problem which is also of interest in its own right.  
The objective  is to find the best possible value of $q$ in such an estimate. 
In \cite{GangulyThangavelu} it is proved that $\mathcal{A}_1$ maps $L^p(\bbH^n)$ to $L^q(\bbH^n)$ provided that $(\tfrac 1p, \tfrac 1q)$ belongs to the interior of the triangle with corners $(0,0)$, $(1,1)$, $(\tfrac{2n}{2n+1}, \frac 1{2n+1})$, as well as the line joining the points $(0,0)$ and $(1,1)$. In the following theorem, we provide $L^p\to L^q$ bounds that are sharp.
\begin{thm}
\label{thm single avg}
Let $n\geq 1$. The inequality 
\begin{equation*}
\label{Hsphmeans} \|f*\mu\|_{L^q(\bbH^n)} \lc \|f\|_{L^p(\bbH^n) }  
\end{equation*}  holds for all $f\in  L^p(\bbH^n)$ if and only if  
$(\tfrac 1p, \tfrac 1q)$ belongs to the closed triangle  with corners 
$(0,0)$, $(1,1)$ and $(\frac{2n+1}{2n+2}, \frac{1}{2n+2}).$
\end{thm}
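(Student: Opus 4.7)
The strategy is to split the argument into sufficiency and necessity, with the main work concentrated on the single endpoint bound at the vertex $(\tfrac{2n+1}{2n+2},\tfrac{1}{2n+2})$. Since $\mu$ is a finite positive measure, Young's inequality gives $\mathcal{A}_1 : L^p(\bbH^n)\to L^p(\bbH^n)$ for every $p\in[1,\infty]$, which places the full diagonal $1/p=1/q$ in the range of boundedness. By Riesz--Thorin interpolation it therefore suffices to prove the single endpoint inequality
\[
\|\mathcal{A}_1 f\|_{L^{2n+2}(\bbH^n)} \lc \|f\|_{L^{(2n+2)/(2n+1)}(\bbH^n)}.
\]
This endpoint coincides with the Strichartz--Littman endpoint for convolution with a smooth hypersurface measure of non-vanishing curvature on $\bbR^{2n+1}$; the content of the theorem is that the Koranyi sphere achieves the same bounds even though its Euclidean Gaussian curvature degenerates to order $|\uom|^2$ at the two poles $(\ubar{0},\pm 1)$.

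To access the endpoint I would decompose $\mu$ according to this polar degeneracy. Choosing a smooth partition of unity adapted to the dyadic shells $|\uom|\sim 2^{-k}$, write $\mu = \mu^{\sharp} + \sum_{k\ge 1}\mu_k$, with $\mu^{\sharp}$ supported where $|\uom|\gtrsim 1$. For the main piece $\mu^{\sharp}$, the Koranyi sphere is a smooth hypersurface with non-vanishing Gaussian curvature in the Euclidean sense, and the required $L^p\to L^q$ bound follows from the sharp Strichartz--Littman estimate for convolution with curved surface measures on $\bbR^{2n+1}$; the twist $\tfrac12 \ux^\intercal J\uom$ is absorbed as a smooth variable-coefficient phase and does not affect the boundedness range at this level. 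For the polar shells $\mu_k$ the surface is nearly flat in the Euclidean sense, Euclidean Fourier decay alone is insufficient to close the sum in $k$, and the extra ingredient must come from the Heisenberg twist. Concretely, the plan is to combine a trivial size bound $\|f\ast\mu_k\|_\infty \lc 2^{-A_1 k}\|f\|_1$ with an $L^2\to L^2$ bound $\|f\ast\mu_k\|_2\lc 2^{-A_2 k}\|f\|_2$ obtained by a $TT^*$ stationary-phase analysis of $f\mapsto f\ast\mu_k$ (or equivalently via the group Fourier transform on $\bbH^n$), in which the symplectic form $J$ supplies the non-degenerate phase Hessian missing from the pure Euclidean picture. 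Riesz--Thorin interpolation on each $\mu_k$ then yields an $L^p\to L^q$ bound with decay geometric in $k$, and summing recovers the endpoint.

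For necessity, the constraint $1/q \le 1/p$ is obtained by testing against the characteristic function of a Koranyi ball of large radius. Sharpness of the endpoint vertex is verified by a Knapp-type example adapted to the polar degeneracy: with $f=\chi_B$ where $B$ is the Koranyi ball of radius $\delta$ centred at the pole $(\ubar{0},1)\in S_K$, one has $\|f\|_{L^p}=\delta^{(2n+2)/p}$, while a direct analysis shows that $\mathcal{A}_1 f\sim \delta^{2n}$ on a Koranyi ball of radius $\sim \delta$ about $(\ubar{0},2)=(\ubar{0},1)\cdot (\ubar{0},1)$, the Heisenberg square of the pole. Hence $\|\mathcal{A}_1 f\|_{L^q}\sim \delta^{2n+(2n+2)/q}$, and the resulting ratio is uniformly bounded in $\delta\to 0$ precisely when $(\tfrac1p,\tfrac1q)$ lies in the closed triangle, with equality at the endpoint vertex.

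The principal obstacle is the $L^2\to L^2$ estimate for the polar shells $\mu_k$ with the geometric decay required to sum in $k$. Since the Euclidean curvature of $S_K$ is essentially zero near the poles, the decay must come entirely from the twist in the Heisenberg group law, and clarifying exactly how the symplectic form $J$ cleanly produces the missing non-degeneracy in the relevant oscillatory integral is the technical heart of the argument.
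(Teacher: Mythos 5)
There are genuine gaps on both sides of the ``if and only if.'' On the sufficiency side, your central interpolation scheme for the polar shells cannot run as written: the ``trivial size bound'' $\|f*\mu_k\|_\infty \lc 2^{-A_1k}\|f\|_1$ is false, because $\mu_k$ is a singular measure carried by a hypersurface and convolution with it does not map $L^1$ to $L^\infty$ at all (interpolating an $L^2\to L^2$ bound with the genuinely trivial $L^1\to L^1$ and $L^\infty\to L^\infty$ bounds only ever produces diagonal estimates, never $L^p$-improving ones). To get off-diagonal bounds one must first perform a dyadic \emph{frequency} decomposition of the kernel, prove $\|A^k_t\|_{L^1\to L^\infty}\lc 2^{k}$ and an $L^2$ bound for each frequency piece, and then handle the endpoint vertex --- where the pieces are merely uniformly bounded, not summable --- by a Littlewood--Paley argument. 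This is exactly what the paper does. Moreover, your polar-shell decomposition is unnecessary for this theorem: the paper's key observation is that the bilinear term $\tfrac12\ux^\intercal J\uy$ makes the \emph{rotational} curvature (the mixed Hessian $\Phi''_{x,y}$) non-degenerate everywhere on the Kor\'anyi sphere, including at the poles, so H\"ormander's fixed-time $L^2$ oscillatory integral theorem applies directly with the full decay $2^{-kn}$ and no loss in a polar parameter. (It is only the \emph{cinematic} curvature, relevant for the maximal function at $Q_4$, that degenerates at the poles and forces the rescaling you describe.) Your intuition that $J$ supplies the missing non-degeneracy is correct and is the paper's mechanism, but you leave precisely this verification --- which you yourself call the technical heart --- unproven, so the sufficiency argument is not complete.

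The necessity argument is also incomplete. Testing on large balls gives $\tfrac1q\le\tfrac1p$, and your polar Knapp example gives $\tfrac1p-\tfrac1q\le\tfrac{n}{n+1}$; the intersection of these two half-planes is a strip containing the closed triangle, not the triangle itself, so your claim that the Knapp ratio is bounded ``precisely when $(\tfrac1p,\tfrac1q)$ lies in the closed triangle'' is false --- the Knapp line is merely one supporting line at the vertex. Two further families of examples are needed: a thin-annulus/dilation example (test $f=\bbone_{\{||y|_K-1|\le\delta\}}$ against a $\delta$-ball at the origin) which forces $\tfrac1q\ge\tfrac{1}{2n+1}\tfrac1p$, cutting out the edge from $(0,0)$ to the vertex; and a focusing example (test $f=\bbone_{B_\delta}$ against points $x$ with $|x|_K\approx 1$ whose unit sphere passes near the support) which forces $2n+\tfrac1q\ge\tfrac{2n+1}{p}$, cutting out the edge from $(1,1)$ to the vertex. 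Without these, the ``only if'' direction is not established.
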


It is even more interesting to consider the local maximal function 
\[Mf(x):=\sup_{t\in [1,2]}|\mathcal{A}_t f(x)|.\] In light of the recent $L^p\to L^q$ estimates for the local maximal operator associated with codimension two spheres in the Heisenberg groups \cites{BagchiHaitRoncalThangavelu, roos2021lebesgue} (see the following remarks), it is natural to seek similar estimates for $M$. Such an estimate would also imply sparse bounds for the global maximal operator $\mathfrak{M}$.
Our main theorem contains $L^p\to L^q$ estimates for $\mathcal{M}$ which are sharp up to endpoints.
\begin{thm} \label{thm:max full} Let $n\ge 2$.
Let  $\cR$ be the closed quadrilateral  with corners
\begin{equation*}\label{quadrilateral}\begin{gathered}
Q_1=(0,0), \qquad Q_2=(\tfrac{2n}{2n+1}, 
\tfrac{2n}{2n+1}),  
\\
Q_3=(\tfrac{2n}{2n+1},\tfrac{1}{2n+1}), \quad 
 Q_4=\left(\tfrac{n(2n+1)}{2n^2+2n+2},\tfrac{n}{2n^2+2n+2}\right).
\end{gathered} 
\end{equation*} Then 
 
(i) $M:L^p(\bbH^n)\to L^q(\bbH^n)$ is bounded if $(\frac 1p, \frac 1q)$ belongs to the interior of $\cR$, or to the open boundary  segment $(Q_2,Q_3)$, or to the half open boundary segment $[Q_1,Q_2)$. 

(ii) $M$ does not map $L^p(\bbH^n)$ to $L^q(\bbH^n)$  if $(\frac 1p, \frac 1q)\notin \cR$.

(iii) $M$ does not map $L^p(\bbH^n)$ to $L^p(\bbH^n)$   for  $(\tfrac 1p,\tfrac 1q)=Q_2$. 
\end{thm}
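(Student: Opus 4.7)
The proof of Theorem~\ref{thm:max full}(i) follows the Schlag--Sogge blueprint, with the sharp single-average estimate of Theorem~\ref{thm single avg} as the main input. The plan is to perform a Littlewood--Paley decomposition $\mu=\sum_{k\ge 0}\mu_k$, with $\widehat{\mu_k}$ localized at frequency $2^k$, and for each interior point $(1/p,1/q)\in\cR$ reduce the maximal bound to a scale-by-scale estimate
\begin{equation*}
\|f*\mu_k\|_{L^q(\bbH^n)}\lc 2^{-ks(p,q)}\|f\|_{L^p(\bbH^n)}
\end{equation*}
that can be summed after paying a $2^{k/q}$ factor coming from a Sobolev embedding in $t$. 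Setting $F(t,x)=f*(\mu_k)_t(x)$, the pointwise bound $\sup_t|F(t)|^q\lc|F(1)|^q+\int_1^2|F|^{q-1}|\partial_tF|\,dt$, combined with the fact that $\partial_t(\mu_k)_t$ gains a factor of $2^k$ in operator norm, yields, via H\"older,
\begin{equation*}
\|\sup_{t\in[1,2]}|F|\|_{L^q(\bbH^n)}\lc 2^{k/q}\cdot 2^{-ks(p,q)}\|f\|_{L^p(\bbH^n)},
\end{equation*}
which is summable over $k\ge 0$ precisely when $s(p,q)>1/q$.

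The task then is to pin down $s(p,q)$ sharply, and here the geometry of $S_K$ enters decisively. I plan to split $\mu_k=\mu_k^{\mathrm{reg}}+\mu_k^{\mathrm{pole}}$, supported respectively on the bulk of $S_K$ (a smooth codimension-one hypersurface with $2n$ non-vanishing curvatures) and near the poles $(\underline 0,\pm 1)$. On the regular piece, standard stationary phase produces Fourier decay of order $2^{-kn}$ and, interpolated against the trivial $L^\infty\to L^\infty$ estimate, yields the expected single-average bounds. Near the poles, $S_K$ is locally a quartic graph $\ox=\pm 1\mp\tfrac12|\ux|^4+O(|\ux|^8)$, so the curvature in the $\ux$-directions degenerates. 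I will perform a secondary dyadic decomposition at the subordinate scale $|\ux|\sim 2^{-\ell}$ and exploit the anisotropic Heisenberg dilation $\delta_{2^{-\ell}}$ to rescale each piece to a non-degenerate one, with careful bookkeeping of the resulting $\ell$-sum. The function $s(p,q)$ will be the minimum of the contributions coming from the regular and pole pieces.

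Reaching the critical vertex $Q_4$ requires a genuinely sharper input than the elementary Sobolev trick. I will establish a local smoothing estimate
\begin{equation*}
\Big(\int_1^2\|f*(\mu_k)_t\|_{L^q(\bbH^n)}^q\,dt\Big)^{1/q}\lc 2^{-k\sigma(p,q)}\|f\|_{L^p(\bbH^n)}
\end{equation*}
with $\sigma(p,q)>s(p,q)$, exploiting averaging in the one-parameter family $\{\mathcal{A}_t\}$; substituting this into the Sobolev argument improves the summability condition from $s>1/q$ to $\sigma>1/q$ and gives the interior of $\cR$. The half-open diagonal segment $[Q_1,Q_2)$ is immediate from Cowling's $L^p$-bound for $\mathfrak{M}$ interpolated with the trivial $L^\infty\to L^\infty$ estimate, and the open vertical edge $(Q_2,Q_3)$ follows by interpolating this diagonal bound with an interior estimate close to $Q_3$.

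For the negative results (ii) and (iii) I will test against canonical examples. Knapp-type tubes adapted to the parabolic Heisenberg dilation, placed once near a pole of $S_K$ and once on its equator, will witness the sharpness of the two slanted edges $[Q_1,Q_4]$ and $[Q_4,Q_3]$, each tube being tailored to the curvature behaviour at the corresponding location. For the failure of $M$ on $L^p\to L^p$ at $Q_2$, where $p=(2n+1)/(2n)$, a Stein-type focusing example $f(x)=|x|_K^{-2n/p}(\log|x|_K^{-1})^{-1}\mathbf{1}_{|x|_K<1/2}$ does the job: $f$ lies in $L^p(\bbH^n)$ by a logarithmic margin, but $\mathcal{A}_tf$ for $t\in[1,2]$ concentrates along the $t$-dilate of the pole with a non-$L^p$ tail. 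The principal technical obstacle is the quartic degeneracy at the poles: controlling the $\ell$-summation without losing the sharp exponent $s(p,q)$, and then upgrading to a local smoothing estimate strong enough to reach $Q_4$, is the heart of the argument.
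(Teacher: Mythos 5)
Your overall skeleton --- dyadic frequency decomposition, the Sobolev-in-$t$ reduction paying $2^{k/q}$, and the secondary dyadic decomposition near the poles at scale $|\ux|\sim 2^{-\ell}$ followed by anisotropic rescaling --- matches the paper's architecture. But there is a genuine gap exactly where you locate ``the heart of the argument'': the estimate that reaches $Q_4$ is merely asserted as ``a local smoothing estimate\dots exploiting averaging in the one-parameter family,'' with no mechanism. In the paper this is a Stein--Tomas type argument: after discretizing $[1,2]$ into the $2^k$ points $\cZ_k$, one proves an $L^2\to L^{q_5,\infty}(\bbR^{2n+1}\times\cZ_k)$ bound with $q_5=\tfrac{2(n+2)}{n}$ by a $TT^*$ argument, interpolating (via Bourgain's trick) an $L^2\to L^2$ bound for $T^k_t(T^k_{t'})^*$ (from H\"ormander's theorem, i.e.\ non-vanishing rotational curvature) against an $L^1\to L^\infty$ kernel bound $\|K^k_{t,t'}\|_\infty\lc (1+2^k|t-t'|)^{-n}$. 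The latter requires the cinematic curvature condition \eqref{eq:rank-curv} --- that the cone $y\mapsto\nabla_{x,t}\Phi$ has the maximal number $2n$ of non-vanishing principal curvatures --- and verifying it for the rescaled operators at the poles is the substantive work: the Heisenberg bilinear term forces an imbalanced scaling ($\uy\mapsto 2^{-\ell}\uy$ but $\ux\mapsto 2^{-3\ell}\ux$, $\ox\mapsto 2^{-4\ell}\ox$), the rescaled amplitudes have $x$-support of size $2^{4\ell}$ so a Cotlar--Stein almost-orthogonality argument is needed on top of H\"ormander's estimate, and all curvature lower bounds must be shown uniform in $\ell$. None of this is supplied by ``averaging in the one-parameter family,'' and without it the exponent at $Q_4$ is not obtained.

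Two further points. First, your route to the open vertical edge $(Q_2,Q_3)$ fails: every point of that segment has $\tfrac1p=\tfrac{2n}{2n+1}$, whereas both inputs you propose to interpolate (the diagonal bound on $[Q_1,Q_2)$ and interior estimates near $Q_3$) have $\tfrac1p<\tfrac{2n}{2n+1}$, and interpolation cannot push $\tfrac1p$ beyond both endpoints. One needs endpoint information at $Q_2$ and $Q_3$ themselves; the paper obtains restricted weak type there by applying Bourgain's interpolation lemma to the scale-by-scale bounds (which grow at one Lebesgue exponent and decay at another) and then interpolates these two restricted weak type estimates to get strong bounds on the open segment. Second, part (ii) requires a counterexample for each of the four edges of $\cR$; you address only $Q_1Q_4$ and $Q_3Q_4$ (plus the vertex $Q_2$), omitting the condition $p\le q$ for the edge $Q_1Q_2$ and the concentration example $f=\bbone_{B_{\delta}}$ yielding $2n\ge\tfrac{2n+1}{p}$ for the edge $Q_2Q_3$. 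Also, your focusing function $|x|_K^{-2n/p}(\log|x|_K^{-1})^{-1}$ lies comfortably in $L^p$ (the relevant homogeneous dimension is $2n+2$, not $2n$), so the singularity is too weak to force divergence; the exponent must be chosen so that membership in $L^p$ holds only by the logarithmic margin.
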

\begin{figure}
    \centering
    \begin{tikzpicture}[scale=5]
\draw[step=1cm,thick,->] (0,0) -- (1,0) node[anchor=north west] {$\frac{1}{p}$};
\draw[step=1cm,thick,<->] (0,0) -- (0,1) node[anchor=south east] {$\frac{1}{q}$};
\draw[opacity=.8] (0,0) -- ({4/5},{4/5})--({4/5},{1/5})--({5/7},{1/7})--(0,0); 
\fill [color=gray,opacity=.3] ({4/5},{4/5})--({4/5},{1/5})--({5/7},{1/7})--(0,0);
\draw [dashed,opacity=.3] (1,0) -- (0,1);
		\draw [dashed,opacity=.3] (0,0) -- (1,{1/5}); 
		\draw [dashed,opacity=.3] (0,0) -- (1,1); 
		\draw [dashed,opacity=.2]
		(.5, 0) -- (.5, .5);
\fill (0,0) node [left] {$Q_1$} circle [radius=.02em];
		\fill ({4/5},{4/5}) node [above left] {$Q_{2}$} circle [radius=.02em];
		\fill ({4/5},{1/5}) node [right] {$Q_{3}$} circle [radius=.02em];
		\fill ({5/7},{1/7}) node [below right] {$Q_{4}$} circle [radius=.02em];		
		
\end{tikzpicture}
    \caption{The region $\mathcal{R}$ in Theorem \ref{thm:max full}, for $n=2$.}
    \label{fig:my_label}
\end{figure}
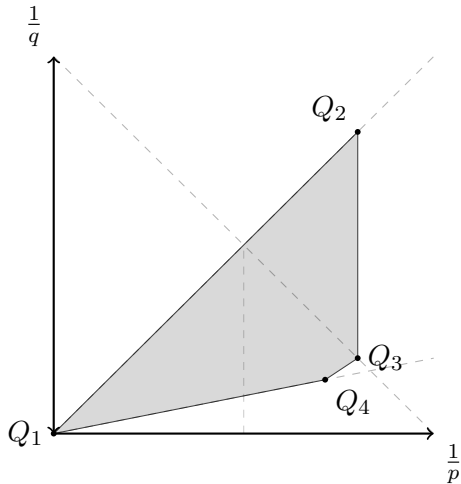
There has been considerable progress in the recent years on the problem of establishing $L^p$ improving properties of localized maximal functions associated with surfaces, both in the Euclidean and Heisenberg settings. A few remarks are in order to shed light on the chief features of our problem which make it different from some related bodies of work.
\begin{rems}
(i) \textit{Non-vanishing Rotational Curvature:} A different maximal function on the Heisenberg group, considered in \cites{NevoThangavelu1997, MuellerSeeger2004, NarayananThangavelu2004, BagchiHaitRoncalThangavelu, roos2021lebesgue, BeltranGuoHickmanSeeger}, is associated to averages over codimension two spheres contained in a dilation invariant subspace of $\bbH^n$. In contrast, the averaging operator $\mathcal{A}_t$ in \eqref{eq:koravg} is associated to a surface of codimension one which respects the non-isotropic dilation structure on $\bbH^n$. Further, unlike the codimension two case, the oscillatory integral operators associated with $\mathcal{A}_t$ possess non-vanishing rotational curvature at fixed time $t$
(this fact 
is also implicit in \cite{phong1986hilbert} and proven directly in \cite{schmidt1998maximaloperatoren}). In this sense, $\mathcal{A}_t$ could be considered to be a faithful analog of the Euclidean spherical average in the Heisenberg setting.

(ii) \textit{Vanishing of Cinematic Curvature:} Unlike the Euclidean spherical averaging operator, the rank of the cinematic curvature matrix \eqref{curvmatrix} associated with 
$\mathcal{A}$  
is zero at the north and south poles of the Kor\'anyi sphere. 
To deal with the issue of flatness (that is, the absence of cinematic curvature in the sense of \cite{sogge1991propagation}) at the poles, we need to employ a scaling argument. The first step in such an argument is to dyadically decompose the relevant kernel based on the distance from the poles, followed by rescaling each piece to a region where the curvature does not vanish. This idea was used by Iosevich \cites{iosevich1994maximal} to establish the $L^p$ boundedness of global maximal operators associated to families of flat, finite type curves in $\bbR^2$. It was also used in \cites{manna2017pl} and more recently \cites{LWZ} to prove $L^p\to L^q $ estimates for local maximal functions along some finite type curves in $\bbR^2$ and hypersurfaces in $\bbR^3$. However, unlike  \cites{iosevich1994maximal,LWZ,manna2017pl}, our problem is based in a non-Euclidean setting. The Heisenberg group structure calls for a new type of scaling argument.
\end{rems}

\subsection*{Comparison with the Euclidean convolution structure} The Heisenberg group structure plays a crucial role in our analysis. It shows itself in the presence of the bilinear term involving the symplectic matrix $J$ in the defining equation of the Kor\'anyi sphere centred at $x$ and of radius $t$, given by
\begin{equation}
    \label{eq Heis Kor sph}
    |\ux-\uy|^4+|\ox-\oy+\tfrac{1}{2}\ux^{\intercal}J\uy|^2=t^4.
\end{equation}
To illustrate its significance, we contrast our situation with that of the Kor\'anyi sphere in $\bbR^{2n+1}$ when the translations are Euclidean and not given by the Heisenberg law. The defining equation of such a sphere centred at $x$ and of radius $t$ is given by
\begin{equation}
    \label{eq Eucl Kor sph}
    |\ux-\uy|^4+|\ox-\oy|^2=t^4.
\end{equation}
\begin{figure}
    \centering
    \begin{tikzpicture}[scale=5]
\draw[step=1cm,thick,->] (0,0) -- (1,0) node[anchor=north west] {$\frac{1}{p}$};
\draw[step=1cm,thick,<->] (0,0) -- (0,1) node[anchor=south east] {$\frac{1}{q}$};
\draw[opacity=.8] (0,0) -- ({4/5},{4/5})--({4/5},{1/5})--({5/7},{1/7})--(0,0);
\draw[color=blue,opacity=1,dashed]
({2/3},{1/3})--({4/5},{3/5});
\draw[color=blue,opacity=1,dashed]
({5/8},{1/4})--({5/9},{1/9});
({4/5},{1/5})--({10/13},{2/13})--(5/7,1/7);
\fill [color=gray,opacity=.3] ({4/5},{4/5})--({4/5},{1/5})--({5/7},{1/7})--(0,0);
\fill [color=blue,opacity=.3] ({2/3},{2/3})--({2/3},{1/3})--({5/8},{1/4})--(0,0);
\draw [dashed,opacity=.3] (1,0) -- (0,1);
		\draw [dashed,opacity=.3] (0,0) -- (1,{1/5}); 
		\draw [dashed,opacity=.3] (0,0) -- (1,1); 
		\draw [dashed,opacity=.2]
		(.5, 0) -- (.5, .5);
\fill (0,0) node [left] {$Q_1=P_1$} circle [radius=.02em];
        \fill ({4/5},{4/5}) node [above left] {$Q_{2}$} circle [radius=.02em];
		\fill ({4/5},{1/5}) node [right] {$Q_{3}$} circle [radius=.02em];
		\fill ({5/7},{1/7}) node [below left] {$Q_{4}$} circle [radius=.02em];
		\fill ({2/3},{2/3}) node [above left] {$P_{2}$} circle [radius=.02em];
		\fill ({2/3},{1/3}) node [right] {$P_{3}$} circle [radius=.02em];
		\fill ({5/8},{1/4}) node [below right] {$P_{4}$} circle [radius=.02em];
		\fill ({4/5},{3/5}) node [below right] {$P_{3}'$} circle [radius=.02em];
		\fill ({5/9},{1/9}) node [below left] {$P_{4}'$} circle [radius=.02em];
		
\end{tikzpicture}
    \caption{A comparison between the regions of $L^p\to L^q$ boundedness for the Kor\'anyi spherical maximal operator corresponding to Heisenberg convolution (in grey) versus the Euclidean convolution (in blue), for $n=2$.}
    \label{fig comparison}
\end{figure}
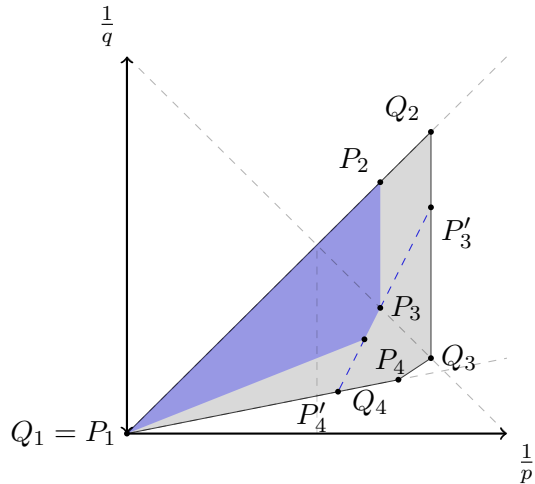  
The quadrilateral $\mathcal{R}=Q_1Q_2Q_3Q_4$ in Figure \ref{fig comparison} (in grey) depicts the region of almost sharp $L^p\to L^q$ estimates for the Kor\'anyi spherical maximal operator with Heisenberg group structure (given by Theorem \ref{thm:max full}). Since the necessary conditions for this operator (see \S \ref{sec counter eg}) also apply to the Kor\'anyi spherical maximal operator described with respect to Euclidean convolution, the region corresponding to the sharp $L^p\to L^q$ estimates for the Euclidean version is contained in $\mathcal{R}$. In fact, this containment is proper, as can be seen by means of a standard Knapp example adapted to a  homogeneous Euclidean hypersurface of degree four, which yields the sharpness of the edge $P_3'P_4'$ in the interior of $\mathcal{R}$. The quadrilateral $\mathcal{P}=P_1P_2P_3P_4$ (in blue) depicts the region in which scaling type arguments can be used to obtain a positive result for the Kor\'anyi maximal operator in the Euclidean setting. Its coordinates in $\bbR^{2n+1}$ are given by
\[\begin{gathered}
P_1=(0,0), \qquad P_2=(\tfrac{n}{n+1}, 
\tfrac{n}{n+1}),  
\\
P_3=(\tfrac{n}{n+1},\tfrac{1}{n+1}), \quad 
P_4=\left(\tfrac{n(2n+1)}{2n^2+3n+2},\tfrac{2n}{2n^2+3n+2}\right).
\end{gathered}\]
The estimate at $P_2$ is the same as the $L^p$ bound contained in \cite[Theorem 3.1]{cowling1987oscillatory} for global maximal operators associated to averages over Euclidean hypersurfaces which are graphs of smooth homogeneous functions with non-vanishing Hessian away from the origin, with the degree of homogeneity being at most twice the dimension of the surface. The estimates corresponding to $P_3$ and $P_4$ can be obtained by following the same arguments as in the proof of Theorem \ref{thm:max full} (but in a more straightforward manner). Eschewing detailed calculations, which shall appear in the author's Ph.D. thesis \cite{SrivThesis}, we now briefly discuss the reasons behind these differences between the Heisenberg and Euclidean settings.

In the Heisenberg case, the term $\ux^{\intercal}J\uy$ changes the geometry of the Kor\'anyi sphere as it is translated away from the origin and is responsible for the non-vanishing of the rotational curvature, even at the poles. 
As a result, H\"ormander's classical $L^2$ theory \cite[Ch. IX.1]{Stein-harmonic} can be used to establish a sharp fixed time $L^2$ estimate. 
This in turn implies the positive result in Theorem \ref{thm single avg} after interpolation with easily obtainable estimates involving the $L^1$ and $L^\infty$ spaces. Using standard Sobolev embedding, the estimates at the vertices $Q_1, Q_2, Q_3$ of the quadrilateral $\mathcal{R}$ also follow. They are sharp up to endpoints and analogous to the corresponding estimates for the standard sphere in $\bbR^{2n+1}$ defined using the Euclidean norm. 

This is in sharp contrast to the Kor\'anyi sphere with Euclidean translational structure, where the rotational curvature vanishes at the poles and a scaling argument (along the lines of \cites{iosevich1994maximal}) is needed. Further, the fixed time $L^2$ estimate obtained using scaling is worse than the same estimate for the standard Euclidean sphere, and consequently, so are the corresponding diagonal and off-diagonal estimates.

As mentioned already, the Heisenberg estimate at $Q_4$ is indeed affected by the flatness at the poles, as is the corresponding estimate at $P_4$ for the Kor\'anyi sphere in the Euclidean case. In both cases, after suitable decomposition and scaling, we need to prove a fixed time $L^2$ estimate for each rescaled piece separately. 
However, in order to establish 
these estimates in the Heisenberg case,  we need to introduce further localization and make use of an almost $L^2$ orthogonality that exists between the different localized pieces (see \S\ref{subsubsec almost L2}). This is due to the surprising feature that the non-isotropic scaling in the ``input variable'' $y$ is different from the one which shows up in the ``output variable'' $x$, with $x, y$ in \eqref{eq Heis Kor sph}. This phenomenon is absent in the Euclidean version and occurs, again, due to the presence of the bilinear term $\ux^\intercal J\uy$ in the second term in \eqref{eq Heis Kor sph}  (see \S\ref{sec scaling}, Remark \ref{rem diff scal}).  

Quite pleasantly, this ``imbalanced'' scaling argument yields a better estimate at $Q_4$ in the Heisenberg case than the estimate at $P_4$ for the Kor\'anyi sphere with Euclidean convolution structure. The former estimate is consistent with a new Knapp-type counterexample \S\ref{sec:Q3Q4} which establishes the sharpness of the edge $Q_3Q_4$ in the Heisenberg case. As mentioned already, the edge $P_3P_4$ for the Kor\'anyi sphere in the Euclidean setting is also sharp, which can be seen by using a standard Knapp example adapted to a Euclidean degree four homogeneous surface.

The estimates at $Q_4$ (and $P_4$) also rely on an $L^\infty$ bound on the kernel of an associated oscillatory integral operator. To do so, we shall seek to make use of estimates for standard oscillatory integrals of Carleson-Sj\"olin-H\"ormander type, in particular a variant of  Stein's theorem  \cite{SteinBeijing} formulated in \cite{MSS93} which relies on the maximal possible number of nonvanishing curvatures for a cone in the fibers of the canonical relation (see \eqref{eq:rank-curv}). 

The above discussion was focused on the estimates around the poles. Both the curvature conditions also hold at the equator and in the intermediate region of the Kor\'anyi sphere, enabling us to apply H\"ormander's $L^2$ estimate and the aforementioned Stein's theorem directly. However, their verification is still technically involved in the Heisenberg case due to the additional bilinear term. 
\subsection*{Plan of the paper}
The paper is organized as follows:
\begin{itemize}
\item \S \ref{sec prelim} contains a localization argument for the function $f$, and a partition of unity argument which streamlines our analysis into three regimes, based on whether the kernel of the Fourier integral operator being considered is supported around the poles, at the equator or in an intermediate region. We also describe the corresponding parametrization of the Kor\'anyi sphere (explicitly around the poles and implicitly in the other two regimes). 
\item In \S \ref{sec: main estimates} we list the basic estimates 
%which imply Theorems \ref{thm single avg} and \ref{thm:max full}, 
and reduce them to corresponding ones for oscillatory integral operators. 
\item In \S \ref{sec scaling} we use a scaling argument to understand the behaviour of our operator around the poles (or the north pole to be more specific, but the behaviour around the south pole is exactly the same due to symmetry). 
\item \S \ref{sec stein tomas} describes a Stein-Tomas type argument to reduce the estimate at $Q_4$ to an $L^2$ estimate and an $L^\infty$ bound on the kernel of an oscillatory integral operator. 
\item \S \ref{sec matrix inv} contains a few technical results about matrices and radial functions which will be crucial to calculations verifying the non-vanishing of the rotation curvature and the additional curvature condition \eqref{eq:rank-curv}. 
\item In \S \ref{sec np}, we verify these two conditions around the poles (for the rescaled operators). \S\ref{subsubsec almost L2} also contains an almost orthogonality argument crucial to the proof of the fixed time $L^2$ estimate for the rescaled operators. 
\item In \S \ref{sec eq im}, we verify the curvature conditions at the equator and in the intermediate region, thus finishing the proofs of the positive results contained in Theorems \ref{thm single avg} and \ref{thm:max full}. 
\item In \S \ref{sec counter eg} we provide counterexamples to show the sharpness of these theorems. 
\item In \S\ref{sec further} we briefly discuss their implications on sparse bounds for the lacunary and global maximal functions. 
\item \S \ref{section trans invar} contains a technical lemma about the invariance of both the curvature conditions under the Heisenberg group law.
\end{itemize}

\subsection*{Notation} 
Partial derivatives and tangent vectors along the coordinate direction $e_j$ will be denoted by the subscript $j$ for $j\in \{1,2,\ldots, 2n+1\}$. For a function $f: \bbR^{2n}\to \bbR^{2n}$, $f''$ shall be used to denote its Hessian which is a $2n\times 2n$ matrix.
By $A\lesssim B$ we shall mean that $A\le C\cdot B,$ where $C$ is a positive constant and $A\approx B$ shall signify that $A\lesssim B$ and $B\lesssim A$. $A\lesssim_D B$ shall mean that $A\leq C\cdot B$ with the positive constant $C$ depending on the parameter $D$.

\subsection*{Acknowledgement} 
The author is grateful to her advisor, Andreas Seeger, for suggesting this problem and his invaluable advice at various stages of the project. She is also thankful to the Hausdorff Research Institute of Mathematics and the organizers of the trimester program “Harmonic Analysis and Analytic Number Theory” for a productive stay in the summer of 2021. Research
supported in part by NSF grants DMS 1764295 and DMS 2054220.
\section{Preliminaries}
\label{sec prelim}
\subsection{Localization} 
Without loss of generality, we can limit our consideration to functions $f$ supported in a small subset of a thin neighborhood of the Kor\'anyi sphere of radius one centred at the origin. To see this we use the group translation to tile $\bbH^n$. Let $B_0=[-\tfrac 12,\tfrac 12)^{2n+1}$ and, for $\fn\in \bbZ^{2n+1}$, let $B_{\fn}= \fn \cdot B_0$, i.e.
 $B_{\fn}=\{(\ubar \fn+\ubar z,\bar \fn+\bar z+\tfrac{1}{2}\ubar \fn J\ubar z): z\in B_0\}$.
 One then verifies that $\sum_{\fn\in \bbZ^{2n+1} }\bbone_{B_\fn}=1$.
 Moreover, for $t\in [1,2]$, the $t$-dilates of the measure $\mu$ are supported in $\{w\in \bbH^n: |\ubar w|\le 2, |\bar w| \le 4\}$, hence in the union of $B_\fk$ with 
 $|\fk_j|\le 2$ for $j\le 2n$ and $|\fk_{2n+1}|\le 6$. Denote this set of indices by $\fJ$. Then  \[\supp \big(\mathcal{A}_t[f\bbone_{ B_\fn}]\big) \subset 
 \bigcup_{\fk\in \fJ} (\fn\cdot B_0\cdot B_\fk) \subset \bigcup_{\tilde \fn\in \fI(\fn)} B_{\tilde \fn},
 \]
 where $\fI(\fn)$ is a  set of indices $\tilde\fn$ with $|\fn_j-\tilde \fn_j|\le C(J,n) $ for $j=1,\dots, 2n+1$. This consideration of spatial orthogonality allows us to reduce to the case of functions $f$ supported in the union of a finite number of Heisenberg tiles $B_{\fn}$ which cover the unit Kor\'anyi sphere and its $t$ dilates for $t\in[1,2]$. We can further partition these tiles (resp. the interval $[1,2]$) into a finite number of sub-tiles (resp. sub-interval) of small enough size and consider the averaging operator associated to each such piece separately, summing up the estimates at the end. This reduces matters to the case when $f$ is supported in a small subset of a thin neighborhood of the unit Kor\'anyi sphere, of length at most $\frac{2^{-400n}}{100n}$ in each coordinate direction. As a consequence, we can also assume that $\mathcal{A}_tf$ is supported in a small neighborhood of the origin of length at most $\frac{2^{-400n}}{10n}$ in each direction.

We shall frequently make use of both Taylor approximation and the implicit function theorem to express the phase function of oscillatory integral operators. Both these theorems require the corresponding amplitude to be supported in a set of constant but sufficiently small size. By choosing a suitable smooth partition of unity, we can express $\mathcal{A}_t$ as a finite sum of operators, each of which is localized to a small region of the Kor\'anyi sphere. It then suffices to prove the required estimates for each such operator independently. We now explicitly describe such a localization.

Given a point $w\in S_K$, let $\mathcal{Q}(w):=\{\Tilde{w}\in S_K:|w-\Tilde{w}|<2^{-400n}\}$. 
Since the Kor\'anyi sphere $S_K$ is a compact submanifold of $\bbH^n$, there exists a finite set of points $\{w_{\nu}\}_{\nu}\subseteq S_K$ such that the collection $\{\mathcal{Q}_{\nu}:=\mathcal{Q}(w_{\nu})\}_{\nu}$ forms an open cover of $S_K$. Let $\sum_\nu \eta_\nu$ be a smooth partition of unity subordinate to $\{\mathcal{Q}_{\nu}\}_{\nu}$. Define $\mu^{\nu}:=\mu \eta_{\nu}$ so that $\mu=\sum_\nu \mu^{\nu}$.

We can rewrite \eqref{eq:koravg} as \[\mathcal{A}_tf(x):=\sum_\nu\int_{\mathcal{Q}_\nu} f(\ux-t\ubar{\omega},\ox-t^2\oom-\tfrac{t}{2} 
\ux^\intercal J\ubar{\omega} )\,d\mu^\nu(\ubar{\omega},\oom).\]
Using a change of variables $\uy=\ux-t\uom$, $\oy=\ox-t^2\oom-\tfrac{t}{2}
\ux^\intercal J\ubar{\omega}$, we can express
\[\mathcal{A}_tf(x):=\sum_\nu\int_{\mathcal{Q}_\nu} f(\uy,\oy)\,d\mu^\nu\left(\frac{\ux-\uy}{t},\frac{\ox-\oy+\tfrac{1}{2}\ux^{\intercal}J\uy}{t^2}\right).\]

\subsection{Parametrization}
The defining equation of the Kor\'anyi sphere centred at $x$ and of radius $t$ is
\begin{equation}
    \label{eq defining}
    F(x,t,y):=|\ux-\uy|^4+\left|\ox-\oy+\tfrac{1}{2}\ux^{\intercal}J\uy\right|^2-t^4=0.
\end{equation}
Differentiating, we have
\begin{equation}
    \label{eq grad defining}
    \nabla_{x,t}F(x,t,y)=4|\ux-\uy|^2(\ux-\uy)+2\left(\ox-\oy+\tfrac{1}{2}\ux^{\intercal}J\uy\right)\left(\tfrac{1}{2}J\uy+e_{2n+1}\right)+4t^3e_{2n+2}.
\end{equation}
To parametrize the Kor\'anyi sphere as the graph of a smooth function locally, we need to express one of the coordinates of $y$ as a function of the other $y-$coordinates, $x$ and $t$. The coordinate we choose, and hence the parametrization, will depend upon the localization neighborhood $\mathcal{Q}_\nu$. 

Since the Kor\'anyi sphere is symmetric about the equator, we can limit our attention to its northern hemisphere. We will choose different parametrizations near the equator and around the north pole. Unlike in the case of the Euclidean sphere, we shall see that the curvature properties depend upon the neighborhood $\mathcal{Q}_\nu$ being considered. Heuristically speaking, due to the non-isotropic dilation structure of the Kor\'anyi sphere, the region around the equator exhibits similar curvature properties as the Euclidean sphere. However, around the north pole, the Kor\'anyi sphere behaves like the surface $\oy=1+|\uy|^4$, with the (cinematic) curvature vanishing at the pole. Other than these two extreme cases, the intermediate region needs to be carefully considered as well. 
\begin{rem}
The above discussion only applies to the cinematic curvature condition (see \eqref{eq:rank-curv}). The rotational curvature does not see these different parametrizations, and remains non-vanishing throughout. This has been observed before in the context of the Kor\'anyi sphere in \cites{schmidt1998maximaloperatoren} and is also to be expected in view of the $L^p$ bounds on the global maximal operator in \cites{Cowling1980}.
\end{rem}

Let $\mathcal{J}$ denote the indexing set of the cover $\{\mathcal{Q}_\nu\}_\nu$. Keeping the above discussion in mind, we partition $\mathcal{J}$ into three disjoint sets
\[\mathcal{J}=\mathcal{J}_{\text{Eq}}\cup \mathcal{J}_{\text{IM}}\cup \mathcal{J}_{\text{NP}},\]
where
\begin{align*}
\mathcal{J}_{\text{NP}}&=\{\nu: e_{2n+1}\in \mathcal{Q}_\nu\}\\
\mathcal{J}_{\text{Eq}}&=\{\nu: \text{ there exists } x=(\ux,\ox)\in \mathcal{Q}_\nu \text{ with } \ox=0\},\\
\mathcal{J}_{\text{IM}}&=\mathcal{J}\setminus(\mathcal{J}_{\text{Eq}}\cup \mathcal{J}_{\text{NP}}).
\end{align*}
\subsubsection*{Near the North Pole}
For $\nu\in \mathcal{J}_{\text{NP}}$ and for all $\left(\frac{\ux-\uy}{t},\frac{\ox-\oy+\tfrac{1}{2}\ux^{\intercal}J\uy}{t^2}\right)\in \mathcal{Q_\nu}$, we have $t^{-1}|\ux-\uy|<2^{-400n}$ and we can write
\begin{equation}
    \label{eq north pole form}
     \overline{x}-\overline{y} + \frac{1}{2}\underline{x}^T J \underline{y}=t^2\sqrt{1-\frac{|\ux-\uy|^4}{t^4}}
\end{equation}
Using Taylor's remainder formula, we can express 
\begin{equation}
    \label{eq G Taylor exp}
    \sqrt{1-|\uw|^4}= 1-\tfrac{|\uw|^4}{2}+|\uw|^8R(\uw),
\end{equation}
where $R$ is a smooth, bounded and radial function such that
\begin{equation}
    \label{eq Gerror}
    \left|\frac{\partial^\beta}{\partial \uw^{\beta}}R(\uw)\right|\leq 100n\,\, \text{ for } |\beta|\in\{0,1,2,3\} \text{ and } |\uw|<\frac{1}{2}.
\end{equation} 
Plugging \eqref{eq G Taylor exp} with $\uw=\frac{\ux-\uy}{t}$ into \eqref{eq north pole form}, we obtain
\begin{equation}
    \label{eq np form 1}
     \overline{y}=\mathcal{G}(x,t,\uy):=\overline{x} + \frac{1}{2}\underline{x}^T J \underline{y}-t^2+t^2\frac{|\ux-\uy|^4}{2t^4}+\frac{|\ux-\uy|^8}{t^8}R(\frac{\ux-\uy}{t}).
\end{equation}
\subsubsection*{Near the Equator}
Observe that for $\mathfrak{A}\in O(2n)$, the action $(\ux,\ox,t,\uy,\oy)\to (\mathfrak{A}\ux,\ox,t,\mathfrak{A}\uy,\oy)$ leaves the first and third terms in \eqref{eq defining} unchanged, and replaces $J$ by $J'=\mathfrak{A}^{\intercal}J\mathfrak{A}$ in the second term. Since $J'$ is still a skew-symmetric matrix with $J'^2=-I_{2n}$, as long as our analysis is uniform in the set of all matrices satisfying these two properties, using horizontal rotations, it suffices to consider only those $\nu\in \mathcal{J}_{\text{Eq}}$ such that the corresponding neighborhood $\mathcal{Q}_\nu$ contains $e_1$. Let $ \mathcal{J}_{\text{Eq}}'$ denote the collection of all such $\nu$.

For any $\left(\frac{\ux-\uy}{t},\frac{\ox-\oy+\tfrac{1}{2}\ux^{\intercal}J\uy}{t^2}\right)$  in such a neighborhood $\mathcal{Q}_{\nu}$, $t^{-1}|x_1-y_1|>1-2^{-400n}$ and $t^{-2}|\ox-\oy+\tfrac{1}{2}\ux^{\intercal}J\uy|<2^{-400n}$, and in view of \eqref{eq grad defining}, this implies that $\left|\frac{\partial F}{\partial y_1}\right|>t(1-2^{-400n})$. Write $y=(\uy,\oy)=(y_1,y',\oy)$ with $y'\in \bbR^{2n-1}$. We can use the implicit function theorem 
to express 
\begin{equation}
\label{eq equator form}
    y_1=H_1(x,t,y',\oy), \text{ with  }\nabla_{(x,t)}H_1(x,t,y',\oy)=-\left[\left(\frac{\partial F}{\partial y_1}\right)^{-1}\nabla_{x,t}F\right]\bigg|_{(x,t,H_1(x,t,y',\oy), y',\oy)}.
\end{equation}
\subsubsection*{Intermediate Region}
Again, using a horizontal rotation argument as above, it suffices to consider only those $\nu\in \mathcal{J}_{\text{IM}}$ such that the corresponding neighborhood $\mathcal{Q}_\nu$ contains $s_1e_1+s_{2n+1}e_{2n+1}$, with $\min(s_1, s_{2n+1})>2^{-200n}$ and $s_1^4+s_2^2=1$.  Let $ \mathcal{J}_{\text{IM}}'$ denote the collection of all such $\nu$.

For any $\left(\frac{\ux-\uy}{t},\frac{\ox-\oy+\tfrac{1}{2}\ux^{\intercal}J\uy}{t^2}\right)$  in such a neighborhood $\mathcal{Q}_{\nu}$, $\min\left(t^{-1}|x_1-y_1|, t^{-2}|\ox-\oy+\frac{1}{2}\ux^{\intercal}J\uy|\right)>2^{-400n}$ and $t^{-1}|x'-y'|<2^{-400n}$. In view of \eqref{eq grad defining}, this implies that $2^{-200n-1}<t^{-2}\left|\frac{\partial F}{\partial \oy}\right|<1$.  
We can use the implicit function theorem again, this time to express 
\begin{equation}
\label{eq intermediate form}
    \oy=\oH(x,t,\uy) \text{ with }\nabla_{(x,t)}\oH(x,t,\uy)=-\left[\left(\frac{\partial F}{\partial \oy}\right)^{-1}\nabla_{x,t}F\right]\bigg|_{(x,t, \uy,\oH(x,t,\uy))}.
\end{equation}

\section{Main Estimates}
\label{sec: main estimates}
Keeping the above parametrizations in mind, we are led to consider three types of generalized Radon transforms, associated to incidence relations given by the equations \eqref{eq np form 1}, \eqref{eq equator form} and \eqref{eq intermediate form},
with kernels supported in $\mathcal{Q}_\nu$ for $\nu$ in $\mathcal{J}_{\text{NP}}$, $\mathcal{J}_{\text{Eq}'}$ and $\mathcal{J}_{\text{IM}}'$. We choose $\nu_\np, \nu_\eq, \nu_\im$ in $\cJ_\np', \cJ_\eq'$ and $\cJ_\im'$ respectively, and fix them for the rest of the paper. For $\square\in \{\np, \eq, \im\}$, we define $\mu^\square:=\mu^{\nu_\square}$ and $\eta_\square:=\eta_{\nu_\square}$. 

Recall that we have reduced consideration to functions $f$ with small support around a thin neighborhood of the Kor\'anyi sphere, so that $\mathcal{A}_tf$ is supported around a small neighborhood of the origin. We can choose a smooth function $\chi_1$ supported in a slightly larger neighborhood of the origin so that $\chi_1(x)\mathcal{A}_tf(x)=\mathcal{A}_tf(x)$ for all $x\in \bbR^{2n+1}$. 
Let
\[
\Theta_t(x,y):=\left(\frac{\ux-\uy}{t},\frac{\ox-\oy+\tfrac{1}{2}\ux^{\intercal}J\uy}{t^2}\right).\]
We now define smooth cut-off functions $\Tilde{\chi}_\np(x,t,\uy):=\chi_1(x) \eta_\np\circ\Theta_t(x,\uy, \mathcal{G}(x,t,\uy))$, $\chi_\eq(x,t,y', \oy) :=\chi_1(x) \eta_{\eq}\circ\Theta_t\left(x,H_1(x,t,y'), y', \oy\right)$ and $\chi_\im (x,t,\uy):=\chi_1(x) \eta_{\im}\circ\Theta_t\left(x,\uy, \oH(x,t,y')\right)$. Observe that $x$ and $y'$ are small in the supports of all three functions, $y_1$ is small in the support of $\Tilde{\chi}^\np$ and $\oy$ is small in the support of $\chi_\eq$. Further, due to our assumptions on the supports of $\eta_\np, \eta_\eq, \eta_\im$, it follows that $\mathcal{G}(x,t,\uy)$, $H_1(x,t,y',\oy)$ and $\oH(x,t,\uy)$ are of size about one in the support of $\Tilde{\chi}_\np, \chi_\eq$ and $\chi_\im$ respectively.

We can thus write 
\begin{equation}
\label{eq convol def eq im}
f*\mu^{\eq}_t(x)=\int \chi_{\eq}(x,t,y', \oy) f\left(H_1(x,t,y',\oy), y',\oy\right)\,d y'd\oy,\,\, f*\mu^{\im}_t(x)=\int \chi_{\im}(x,t,\uy) f\left(\oH(x,t,\uy), \uy\right)\,d\uy    
\end{equation}
and 
\[f*\mu^{\np}_t(x)=\int \Tilde{\chi}_{\np}(x,t,\uy) f\left(\uy, \mathcal{G}(x,t,\uy)\right)\,d\uy.\]

When considering the last integral above (at the north pole), it will be convenient to introduce a shear transformation in the $x$-variables (depending smoothly on $t$)
\[\underline \fx(x,t)=\underline x, \,\,\,
\overline \fx(x,t)=\ox+t^2. \]
By this change of variables
\begin{equation}
\label{eq convol def np}
f*\mu^{\np}_t(\underline \fx, \overline \fx-t^2)=\int \chi_{\np}(\fx,t,\uy) f\left(\uy, G(\fx,t,\uy)\right)\,d\uy,    
\end{equation}
with
\begin{equation}
    \label{eq np form}
     G(x,t,\uy):=\overline{x} + \frac{1}{2}\underline{x}^T J \underline{y}+t^2\frac{|\ux-\uy|^4}{2t^4}+\frac{|\ux-\uy|^8}{t^8}R(\frac{\ux-\uy}{t})
\end{equation}
and $\chi^\np(x,t,y)=\Tilde{\chi}_\np(\ux,\ox+t^2, y)$. Note that while the shear transform leaves the Lebesgue space estimates unchanged, the smooth cut off $\chi^\np$ is now supported in the set where both $\ox$ and $G(x,t,\oy)$ are of size about one, while $\ux$ and $\uy$ are small.

The right hand sides of \eqref{eq convol def eq im} and \eqref{eq convol def np} represent operators with Schwartz kernel $K$ of three types depending on the neighborhood of localization:
\begin{align*}
    \textbf{Near the Equator:  } K(x,t,y)&=\chi_{\eq}(x,t,y')\delta_0\left(H_1(x,t,y',\oy)-y_1\right). \\
    \textbf{In the Intermediate Region:  } K(x,t,y)&=\chi_{\im}(x,t,\uy)\delta_0\left(\oH(x,t,\uy)-\oy\right).\\
     \textbf{Near the North Pole:  } K(x,t,y)&= \chi_{\np}(x,t,\uy)\delta_0\left(G(x,t,\uy)-\oy\right).
\end{align*}
By applying the Fourier transform to $\delta_0$, we can express
\begin{equation*}
    \label{eq: kernel integral}
    K(x,t,y)=\chi(x,t,\Tilde{y})\int_{\theta\in\bbR}e^{i\psi(x,t,y,\theta)}\,\frac{d\theta}{2\pi},
\end{equation*}
where the phase function around the north pole, the equator and in the intermediate region is given by $\psi^\np(x,t,y,\theta):=\theta(G(x,t,\uy)-\oy)$, $\psi^\eq(x,t,y,\theta):=\theta(H_1(x,t,y',\oy)-y_1)$ and $\psi^\im(x,t,y,\theta):=\theta(\oH(x,t,\uy)-\oy)$ respectively; the smooth cut-off $\chi$ can be  $\chi_\np, \chi_\eq$ or $\chi_\im$, and $\Tilde{y}$ can be $\uy$ or $(y', \oy)$ depending on the support of the kernel.
Then $K$ is well defined as an oscillatory integral distribution. 

Next, we perform a dyadic decomposition in the frequency variable $\theta$.
Let $\zeta_0$ be a smooth radial function on $\R$ with compact support in $\{|\theta|< 1\}$ such that $\zeta_0(\theta)=1$ for $|\theta|\le 1/2$. We set $\zeta_1(\theta)=\zeta_0(\theta/2)-\zeta_0(\theta)$ and $\zeta_k(\theta)= \zeta_1(2^{1-k}\theta)$. For
$k\ge 0$, $\psi$ of any of the three forms above and corresponding $\chi$, we define 
\[ A^k f(x, t)= A^k_{t} f(x) = \int_{\bbR^{2n+1}} 
\chi(x,t, \Tilde{y})\int_{\theta\in \bbR}\zeta_k(\theta) 
e^{i\psi(x,t,y,\theta)} \tfrac{d\theta}{(2\pi)}\, f(y) dy,  \]
where $\Tilde{y}$ can be $\uy$ or $(y',\oy)$ depending on the support of the kernel.
Further, let
\[ M^kf(x)=  \sup_{t\in [1,2]} |A^k_{t} f(x)|.\] 
Following are the main estimates for $M^k$, which hold uniformly for all three possible forms of the phase function $\psi$ in the support of the corresponding smooth cut off $\chi$.

\begin{proposition}\label{prop:Akmax} 
(i) For $1\le p\le \infty$, 
\Be\label{eqn:maxLpest}
\| M^k f\|_p \lc  2^{\frac{k}p} 2^{-k(2n)\min(\frac1p,\frac1{p'})}\|f\|_p.\Ee
(ii) For $2\le q\le \infty$, 
\Be \label{eqn:maxantidiagest}
\|M^k f\|_{q} \lesssim 2^{k (1-\frac{2n+1}{q})} \|f\|_{q'}.\Ee
(iii) Set $q_5:=\tfrac{2(n+2)}{n}$. 
For every $\epsilon>0$, there exists $C_\epsilon>0$ such that 
 \begin{equation} \label{eqn:maxL2qestnp}
  \| M^k f \|_{L^{q_5, \infty}} \leq C_\epsilon 2^{-k\left(\frac{n^2-1}{n+2}-\epsilon\right)}\|f\|_2.
 \end{equation}
\end{proposition}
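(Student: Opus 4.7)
The overall strategy is to reduce each maximal estimate to a fixed-time bound on $A^k_t$, paying a standard Sobolev embedding loss in the time parameter. From the pointwise identity
\[
|A^k_t f(x)|^q \;\leq\; |A^k_1 f(x)|^q + q\int_1^2 |A^k_s f(x)|^{q-1}\,|\partial_s A^k_s f(x)|\,ds,
\]
integration in $x$ followed by H\"older's inequality gives $\|M^k f\|_q\lesssim (\sup_t\|A^k_t f\|_q)^{1-1/q}(\sup_t\|\partial_t A^k_t f\|_q)^{1/q}$. Since differentiating the $\theta$-integral in $t$ brings down a factor of $\theta\sim 2^k$, one concludes
\[
\|M^k f\|_q \;\lesssim\; 2^{k/q}\sup_{t\in[1,2]}\|A^k_t f\|_{L^p\to L^q}\,\|f\|_p
\]
(with the trivial bound $\|M^k f\|_\infty\le\sup_t\|A^k_t f\|_\infty$ at $q=\infty$). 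The proof thus reduces to establishing sharp fixed-time bounds for $A^k_t$.

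For parts (i) and (ii), the required fixed-time bounds are $\|A^k_t f\|_p\lesssim 2^{-2kn\min(1/p,1/p')}\|f\|_p$ and $\|A^k_t f\|_q\lesssim 2^{k(1-(2n+2)/q)}\|f\|_{q'}$ respectively. The trivial endpoints follow from the fact that the Schwartz kernel $K(x,t,y)$ has amplitude $\sim 2^k$ and is concentrated in an $O(2^{-k})$-neighborhood of the Kor\'anyi sphere $\{F(x,t,y)=0\}$: both $\int|K(x,t,y)|\,dx$ and $\int|K(x,t,y)|\,dy$ are uniformly bounded, while $\sup|K|\lesssim 2^k$. The only non-trivial input, shared by (i) and (ii), is the $L^2\to L^2$ bound $\|A^k_t f\|_2\lesssim 2^{-kn}\|f\|_2$. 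This is exactly H\"ormander's $L^2$ theorem for non-degenerate Fourier integral operators of order zero, applicable once the non-vanishing of the rotational curvature of the Kor\'anyi sphere is verified. This verification (uniform across all three parametrizations) is the content of \S\ref{sec np} and \S\ref{sec eq im}, and Riesz--Thorin interpolation between $p=2$ and $p\in\{1,\infty\}$ (for (i)) or between $q=2$ and $q=\infty$ (for (ii)) completes the arguments.

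For (iii), which is the main obstacle, the plan is to execute the Stein--Tomas type argument of \S\ref{sec stein tomas}. After the Sobolev reduction, the task becomes a fixed-time restricted weak-type estimate of the form $\|A^k_t f\|_{L^{q_5,\infty}}\lesssim 2^{-k\alpha+k\epsilon}\|f\|_2$ for an explicit $\alpha$. The standard approach is a $TT^*$ argument that combines H\"ormander's $L^2\to L^2$ bound on $A^k_t$ with a uniform $L^\infty$ bound on the Schwartz kernel of $A^k_t(A^k_t)^*$. The latter is supplied by Stein's variable coefficient Carleson--Sj\"olin--H\"ormander theorem \cite{SteinBeijing} in the refined form of \cite{MSS93}, which requires the canonical relation of $A^k_t$ to satisfy the maximal rank-curvature condition \eqref{eq:rank-curv} in the fibers of the cone. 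The heart of the matter is verifying \eqref{eq:rank-curv} uniformly across the three parametrizations of the Kor\'anyi sphere, particularly near the poles, where the principal curvatures of the sphere degenerate; this verification occupies \S\ref{sec np} and relies on the matrix computations of \S\ref{sec matrix inv}. Once it is in place, a Bourgain-style analytic interpolation (with the usual family of analytic weights) upgrades the pointwise kernel bound to the claimed Lorentz-space estimate, with the $\epsilon$-loss coming from the logarithmic accumulation in the interpolation.
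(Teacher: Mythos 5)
Your treatment of parts (i) and (ii) is correct and essentially the paper's argument: the continuous Sobolev-embedding-in-$t$ reduction you use is interchangeable with the paper's discrete version (the bound \eqref{eq FTC} over the $2^k$-point set $\cZ_k$), both yield the loss $2^{k/q}$ over the fixed-time bounds of Proposition \ref{prop:fix-time-est}, and those fixed-time bounds do follow by interpolating the trivial $L^1\to L^1$, $L^\infty\to L^\infty$, $L^1\to L^\infty$ estimates with the $L^2$ bound $2^{-kn}$ coming from H\"ormander's theorem and the rotational curvature.

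Part (iii), however, has a genuine structural gap: you reduce \eqref{eqn:maxL2qestnp} to a \emph{fixed-time} estimate $\|A^k_t f\|_{L^{q_5,\infty}}\lc 2^{-k\alpha}\|f\|_2$ and then pay the Sobolev loss $2^{k/q_5}$. The numerology makes this impossible. The best fixed-time $L^2\to L^{q_5}$ bound one can hope for is $2^{-k\frac{n^2-1}{n+2}}$ (already obtained by interpolating $\|A^k_t\|_{L^2\to L^2}\lc 2^{-kn}$ with $\|A^k_t\|_{L^2\to L^\infty}\lc 2^{k/2}$ at $\theta=\tfrac{2}{q_5}=\tfrac{n}{n+2}$); composing this with the Sobolev loss gives only $2^{-k(\frac{n^2-1}{n+2}-\frac{n}{2(n+2)})}$, which falls short of \eqref{eqn:maxL2qestnp} by exactly the factor $2^{k/q_5}$ and, after Bourgain interpolation, lands strictly inside $\cR$ rather than at $Q_4$ (for $n=2$: $2^{-k/2}$ instead of $2^{-3k/4}$, giving the vertex $(\tfrac23,\tfrac16)$ instead of $(\tfrac57,\tfrac17)$). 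This is precisely why the paper never takes a fixed-time route here: Proposition \ref{prop:disc q5} is a genuinely \emph{space-time} estimate in $L^{q_5,\infty}(\bbR^{2n+1}\times\cZ_k)$, and the $TT^*$ operator in \S\ref{sec stein tomas} is $S^k g(x,t)=\sum_{t'\in\cZ_k}T^k_t(T^k_{t'})^*[g(\cdot,t')](x)$, summed over the $2^k$ time nodes and decomposed according to the separation $|t-t'|\sim 2^{-k+m}$. The whole gain comes from the decay $\|T^k_t(T^k_{t'})^*\|_{L^1\to L^\infty}\lc(1+2^k|t-t'|)^{-n}$ in Propositions \ref{prop kernel est np} and \ref{prop kernel est eq} — which is what the cinematic curvature condition \eqref{eq:rank-curv} actually buys, and which is invisible at a single time $t=t'$, where that bound degenerates to $O(1)$. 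Your fixed-time $TT^*$ (H\"ormander $L^2$ bound plus a uniform $L^\infty$ kernel bound for $A^k_t(A^k_t)^*$) therefore cannot see the mechanism that produces the exponent $q_5=\tfrac{2(n+2)}{n}$; in the paper's Bourgain interpolation the extra slot carrying $2^m$ versus $2^{-nm/2}$ is essential to the arithmetic. A secondary inaccuracy: the $\epsilon$-loss in \eqref{eqn:maxL2qestnp} does not come from the interpolation itself but from summing the $O(k)$ rescaled pieces $T^{k,\ell}$, $0\le\ell\le k/4$, in the decomposition near the poles.
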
 
\begin{proof}[Proof of Theorem \ref{thm:max full}, given Proposition \ref{prop:Akmax}] \label{sec:interpolpf}It suffices to show the required bounds for $\cM f(x):= \sum_{k\ge 0} M^k f$.

We shall show that $\mathcal{M}$ is of restricted weak type $(p,q)$ for $(\tfrac{1}{p}, \tfrac{1}{q})\in\{Q_2, Q_3\}$. Observe that for $n\geq 2$, we have $1-\tfrac{2n+1}{2}<0$. To deduce the required restricted weak type estimates for $\mathcal{M}$ at $Q_2, Q_3,$
we recall the Bourgain  interpolation argument (\cite{Bourgain-CompteRendu1986}, \cite{CarberySeegerWaingerWright1999}):
Suppose we are given sublinear operators $T_k$ so that for $k\ge 1$,
\[ \|T_k\|_{L^{p_0,1}\to L^{q_0,\infty}} \lesssim 2^{k a_0}\;\text{and}\; \|T_k\|_{L^{p_1,1}\to L^{q_1,\infty}} \lesssim 2^{-k a_1}\]
for some $p_0, q_0, p_1, q_1\in [1,\infty], a_0, a_1>0$. Then the operator $\sum_{k\ge 1} T_k$ is of restricted weak type $(p,q)$, where 
\[ (\tfrac1p, \tfrac1q, 0) = (1-\vartheta) (\tfrac1{p_0},\tfrac1{q_0}, a_0) + \vartheta(\tfrac1{p_1},\tfrac1{q_1}, -a_1) \]
and $\vartheta=\tfrac{a_0}{a_0+a_1}\in (0,1)$. 

The restricted weak type estimate for $\mathcal{M}$ at $Q_2=(\tfrac{2n}{2n+1},\tfrac{2n}{2n+1})$ now follows from
\eqref{eqn:maxLpest}. 
Similarly, the restricted weak type bound at $Q_3=(\tfrac{2n}{2n+1},\tfrac{1}{2n+1})$ follows from \eqref{eqn:maxantidiagest}. Interpolation between the estimates at $Q_1, Q_2$ and $Q_3$ yields the $L^p\to L^q$ boundedness of $\mathcal{M}$ for $(\tfrac{1}{p}, \tfrac{1}{q})$ in the interior of $\Delta Q_1Q_2Q_3$, on the open boundary  segment $(Q_2,Q_3)$, and on the half open boundary segment $[Q_1,Q_2)$

At \[Q_4=(\tfrac1{p_4},\tfrac1{q_4})=\left(\tfrac{n(2n+1)}{2n^2+2n+1},\tfrac{n}{2n^2+2n+1}\right),\]
due to the loss of a $2^{k\epsilon}$ factor on the right hand side of $\eqref{eqn:maxL2qestnp}$, we only have
\[\left(\tfrac{1}{p_4}-\epsilon,\tfrac{1}{q_4}+\tfrac{n\epsilon}{n+2},-\epsilon'\right)=(\vartheta+2\epsilon)\left(\tfrac{1}{2},\tfrac{n}{2(n+2)}, b+\epsilon\right)+(1-\vartheta-2\epsilon)(1,0,1),\]
with with $b=-\tfrac{n^2-1}{n+2}$, $\vartheta=\tfrac{2(n+2)}{2n^2+2n+2}\in (0,1)$ and $\epsilon'=2\epsilon(1-\epsilon)-(2b+\vartheta)>0$ for $n\geq 2$. Letting $Q_{4, \epsilon}=\left(\tfrac{1}{p_\epsilon},\tfrac{1}{q_\epsilon}\right)=\left(\tfrac{1}{p_4}-\epsilon,\tfrac{1}{q_4}+\tfrac{n\epsilon}{n+2}\right)$, standard interpolation between \eqref{eqn:maxL2qestnp} and the case $q=\infty$ of \eqref{eqn:maxantidiagest} implies that
\[\|\cM f\|_{L^{q_\epsilon}}\lesssim_\epsilon \sum_{k\geq 0}2^{-k\epsilon'}\|f\|_{L^{p_\epsilon}}\lesssim_\epsilon \|f\|_{L^{p_\epsilon}}.\]
Interpolating again between the estimates at $Q_1, Q_2, Q_3$ and $Q_{4, \epsilon}$ yields the $L^p\to L^q$ boundedness of $\mathcal{M}$ for $(\tfrac{1}{p}, \tfrac{1}{q})$ in the interior of the quadrilateral formed by those vertices. Letting $\epsilon\to 0$, we conclude the same for $(\tfrac{1}{p}, \tfrac{1}{q})$ in the interior of $\mathcal{R}$.
Since bounds for $\mathcal{M}$  
imply bounds for $M$, 
this concludes the proof of part (i) of Theorem \ref{thm:max full}. 
\end{proof}
\subsection{Reduction to estimates on the averaging operator}
We consider a discrete dyadic cover of the interval $[1,2]$. 
Given a non-negative integer $k$, let $\cZ_k$ denote the set of left endpoints of all dyadic intervals of the form $(\nu2^{-k}, (\nu+1)2^{-k})$ (with $\nu\in \mathbb{Z}$) which intersect $[1,2]$, endowed with the counting measure. It is clear that  $\#(\cZ_k)= 2^k$. An application of the fundamental theorem of calculus gives the following pointwise bound
\begin{equation}
    \label{eq FTC}
    M^{k}f(x)\leq \sup_{t\in \cZ_{k}}|A^{k}_{t}f(x)|+\int_0^{2^{-k}}|\partial_sA^{k}_{t+s}f(x)|\,ds.
\end{equation}
Taking the $L^q$ norm on both sides, using the triangle inequality, then dominating the supremum by the $\ell^q(\cZ_k)$ norm and using the Minkowski inequality for the second term, we obtain
\[
\|M^{k} f\|_{L^{q}}\leq \| A^{k} f\|_{L^{q}(\R^{2n+1}\times \cZ_{k})}+\int_0^{2^{-k}}\|\partial_sA^{k}_{t+s}f(x)\|_{L^{q}(\R^{2n+1}\times \cZ_{k})}\,ds. \]

By a change of variables and using homegeneity (in $\theta$) for $k\geq 1$, we have
\begin{equation}\label{eq:Ak}  A^k_t f(x) = 2^k \int_{\bbR^{2n+1}}\int_{\bbR} e^{i2^k \psi(x,t,y,\theta)} b(x,t,\Tilde{y},\theta) f(y)\, d\theta\,dy 
\end{equation}
where 
the symbol $b$ is compactly supported in $\bbR^{2n+1}\times\bbR\times \bbR^{2n-1}\times \bbR$. It can be of the form $b_\np, b_\eq$ or $b_\im$, with  $\text{supp}(b_\np)\subseteq \text{supp}(\chi_\np)\times \text{supp}(\zeta_1)$ and $b_\eq, b_\im$ defined analogously ($\Tilde{y}$ might stand for $\uy$ or $(y',\oy)$ depending on the support). In particular, $|\theta|\in [1/2,2]$ in the support of all three types of symbols.

Using homegeneity, it is not hard to see that the operator $2^{-k} \frac{d}{dt} A^k_t$ is a linear combination of expressions involving operators which obey the same Lebesgue space estimates as the operator $A^k_t$. Thus parts (i) and (ii) of Proposition \ref{prop:Akmax} are a consequence of the following fixed time estimates.

 \begin{proposition} \label{prop:fix-time-est} Let $t\in [1,2]$. 
 (i) For $1\le p\le \infty$
 \Be \label{eqn:sptimeLpest}
\| {A}^k_tf \|_{p} \lesssim 2^{-k (2n)\min(\frac1p,\frac1{p'})}\|f\|_p, 
\Ee
with the implicit constant independent of $t$.

(ii) For $2\le q\le \infty$, 
\Be \label{eqn:sptimeantidiagest}
\|{A}^k_t f\|_{ q} \lesssim 2^{k (1-\frac{2n+2}{q})}\|f\|_{q'} ,\Ee
with the implicit constant independent of $t$.
\end{proposition}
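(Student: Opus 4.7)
The plan is to reduce Proposition \ref{prop:fix-time-est} to one key estimate --- the $L^2\to L^2$ bound $\|A_t^k\|_{L^2\to L^2}\lesssim 2^{-kn}$ --- together with two trivial pointwise/support bounds on the Schwartz kernel of $A_t^k$.

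From the representation \eqref{eq:Ak}, the kernel
\[
K_t^k(x,y)=2^k\int e^{i2^k\psi(x,t,y,\theta)}\,b(x,t,\tilde y,\theta)\,d\theta
\]
satisfies $|K_t^k(x,y)|\lesssim 2^k$ uniformly in $t\in[1,2]$, which immediately gives the $L^1\to L^\infty$ estimate, i.e.\ the case $q=\infty$ of \eqref{eqn:sptimeantidiagest}. Moreover, in each of the three regimes $\psi$ has the form $\theta\,S(x,t,y)$ with $S$ equal to $G-\oy$, $H_1-y_1$, or $\oH-\oy$; integration by parts in $\theta$ shows that, for fixed $x$, $K_t^k(x,\cdot)$ is essentially concentrated in an $O(2^{-k})$-thick slab around the surface $\{y:S(x,t,y)=0\}$, so $\sup_x\int|K_t^k(x,y)|\,dy\lesssim 1$. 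By symmetry (using $|\partial_x S|\gtrsim 1$, which is part of the rotational curvature hypothesis below) the same holds with the roles of $x$ and $y$ exchanged. Young's inequality then yields the endpoints $p=1$ and $p=\infty$ of \eqref{eqn:sptimeLpest}.

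The crux is the $L^2\to L^2$ bound. Since $\psi(x,t,y,\theta)=\theta\,S(x,t,y)$ with $S$ implicitly defining the locally parametrized Kor\'anyi sphere, the operator $A_t^k$ is a non-degenerate Fourier integral operator associated with a codimension-one incidence relation in $\bbR^{2n+1}\times\bbR^{2n+1}$. H\"ormander's classical $L^2$ theorem for oscillatory integrals (see, e.g., \cite[Ch.~IX.1]{Stein-harmonic}) then gives the decay $\|A_t^k\|_{L^2\to L^2}\lesssim 2^{-kn}$ provided the Phong--Stein rotational curvature
\[
\det\begin{pmatrix}\theta\,\partial^2_{xy}S & \partial_xS\\[2pt] (\partial_yS)^\intercal & 0\end{pmatrix}
\]
is bounded away from zero, uniformly on the support of the amplitude and for $t\in[1,2]$. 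Verifying this non-degeneracy in each of the three localization regimes (near the north pole, near the equator, and in the intermediate region) is the content of \S\ref{sec np} and \S\ref{sec eq im}; granting these computations, the $L^2\to L^2$ estimate follows.

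The remaining cases of both (i) and (ii) are obtained by Riesz--Thorin interpolation. Interpolating the $L^2\to L^2$ bound $2^{-kn}$ against the trivial $L^1\to L^1$ and $L^\infty\to L^\infty$ bounds produces (i) with the factor $2^{-2kn\min(1/p,1/p')}$. For (ii), interpolating the pointwise bound $\|A_t^k\|_{L^1\to L^\infty}\lesssim 2^k$ against $\|A_t^k\|_{L^2\to L^2}\lesssim 2^{-kn}$ gives $\|A_t^k\|_{L^{q'}\to L^q}\lesssim 2^{k(1-2/q)}\,2^{-2kn/q}=2^{k(1-(2n+2)/q)}$ for $2\le q\le\infty$, as required. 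The principal obstacle, postponed to the curvature computations of later sections, is showing that the rotational curvature does not vanish near the poles of the Kor\'anyi sphere, where the parabolic scaling degenerates; it is precisely the bilinear term $\ux^\intercal J\uy$ in \eqref{eq Heis Kor sph} that persists at the poles and supplies the missing non-degeneracy, a feature absent from the Euclidean analogue.
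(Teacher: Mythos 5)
Your proposal is correct and follows essentially the same route as the paper: trivial $L^1\to L^1$, $L^\infty\to L^\infty$ and $L^1\to L^\infty$ bounds, a fixed-time $L^2$ bound of order $2^{-kn}$ obtained from H\"ormander's theorem under the non-vanishing rotational curvature hypothesis (deferred to the curvature computations of \S\ref{sec np}--\S\ref{sec eq im}), and interpolation. The only cosmetic difference is that the paper implements the $L^2$ step by a partial Fourier transform reducing $A^k_t$ to the oscillatory integral operator $T^k_t$ of \eqref{eq Tk def}, whereas you invoke the equivalent Phong--Stein rotational curvature formulation directly.
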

The above fixed time estimates are sufficient to establish the positive result in Theorem \ref{thm single avg}.
\begin{proof}[Proof of Theorem \ref{thm single avg}, given Proposition \ref{prop:fix-time-est}]
Clearly, $\|f*\mu\|_{L^1}\lesssim \|f\|_{L^1}$ and $\|f*\mu\|_{L^\infty}\lesssim \|f\|_{L^\infty}$.
For $1<p<\infty$, \eqref{eqn:sptimeLpest} implies that
$\|\sum_{k\geq 0}A^k_t\|_{L^p\to L^p}\lesssim \sum_{k\geq 0}2^{-k(2n)\min(\tfrac{1}{p}, \tfrac{1}{p'})}\lesssim 1$. 

Further, since $1-\frac{2n+2}{q}=0$ for $q=\tfrac{1}{2n+2}$, \eqref{eqn:sptimeantidiagest} implies that $\|A^k_t\|_{L^{q'}\to L^q}\lesssim 1$ uniformly in $k$. 
To combine the $A^k_t$, we use standard applications of Littlewood-Paley theory, writing
$A^k_t = L_kA^k_t L_k +E_k$ where the $L_k $ satisfy Littlewood-Paley inequalities \[\Big\| \Big(\sum_{k\ge 0}|L_k f|^2\Big )^{1/2}\Big \|_r\lc \|f\|_r, \quad \Big\| \sum_{k\ge 0} L_k f_k\Big \|_r\lc \Big\|\Big(\sum_{k\ge 0} | f_k|^2\Big)^{1/2}\Big  \|_r\]  for $1<r<\infty$ and the error term $E_k$ has $L^p\to L^q$ operator norm $O(2^{-k})$ for all $1\le p,q\le \infty$. Since $q'\le 2\le q$, applying the Littlewood-Paley inequalities in conjunction with Minkowski's inequalities allows us to deduce the endpoint estimate at $(\tfrac{1}{q}, \tfrac{1}{q'})=(\frac{2n+1}{2n+2}, \frac{1}{2n+2})$. A further interpolation yields the positive result in the closure of the triangle formed by $(0,0)$, $(1,1)$ and $(\frac{2n+1}{2n+2}, \frac{1}{2n+2})$, thus finishing the proof.
\end{proof}
To prove the estimate at $q_5$, taking an $L^{q_5, \infty}$ norm on both sides of \eqref{eq FTC}, we conclude that
\[
\|M^{k} f\|_{L^{q_5, \infty}}\leq \| A^{k}_{t} f\|_{L^{q_5, \infty}(\R^{2n+1}\times \cZ_{k})}+\int_0^{2^{-k}}\|\partial_sA^{k}_{t+s}f(x)\|_{L^{q_5, \infty}(\R^{2n+1}\times \cZ_{k})}\,ds. 
\]
Thus part (iii) of Proposition \ref{prop:Akmax} is a consequence of the following estimate.
\begin{proposition}
\label{prop:disc q5}
Let $q_5=\frac{2(n+2)}{n}$.
We have
\Be \label{eqn:disc q5 np}
\| A^{k} \|_{L^2(\bbR^{2n+1})\to L^{q_5, \infty}(\R^{2n+1}\times \cZ_{k})} \lesssim
k2^{-k (\frac{n^2-1}{n+2}) }. 
\Ee
\end{proposition}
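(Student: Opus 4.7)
The plan is to follow the Stein--Tomas / Mockenhaupt--Seeger--Sogge strategy indicated in \S\ref{sec stein tomas}. Writing $T_k := A^k$ as an operator from $L^2(\bbR^{2n+1})$ to functions on $\bbR^{2n+1}\times\cZ_k$, I would pass to the $TT^*$-operator $T_kT_k^*$, whose Schwartz kernel is
\begin{equation*}
L_k(x,t;x',t')\;=\;\int K_k(x,t,y)\,\overline{K_k(x',t',y)}\,dy\;=\;2^{2k}\!\int e^{i2^k(\theta\Psi(x,t,y)-\theta'\Psi(x',t',y))}b\,\bar b\,d\theta\,d\theta'\,dy,
\end{equation*}
where $\Psi\in\{G,H_1,\oH\}$ depending on the piece of the sphere and $K_k$ is the Schwartz kernel of $A^k_t$.

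The first step is the $L^2\to L^2$ bound: by Proposition \ref{prop:fix-time-est}(i) with $p=2$ we have $\|A^k_t\|_{L^2\to L^2}\lesssim 2^{-kn}$ uniformly in $t$, and summing trivially over the $2^k$ points of $\cZ_k$ yields $\|T_k\|_{L^2\to L^2}^2\lesssim 2^{k(1-2n)}$, hence $\|T_kT_k^*\|_{L^2\to L^2}\lesssim 2^{k(1-2n)}$. The second, harder step is an $L^\infty$ estimate on $L_k$. Here I would apply stationary phase in the fibre variables $(\theta,\theta',y)$: the critical point equations $\Psi(x,t,y)=\Psi(x',t',y)=0$ and $\theta\,\partial_y\Psi(x,t,y)=\theta'\,\partial_y\Psi(x',t',y)$ force $(x',t')$ to lie on the characteristic cone emanating from $(x,t)$. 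Using the non-vanishing rotational curvature (verified in \S\ref{sec np} and \S\ref{sec eq im}) together with the cone rank-curvature condition \eqref{eq:rank-curv}, the transverse Hessian of the phase attains maximal rank, and stationary phase then yields an anisotropic bound of the form
\begin{equation*}
|L_k(x,t;x',t')|\lesssim 2^k\,(1+2^k\rho((x,t),(x',t')))^{-(2n-1)/2}
\end{equation*}
for an appropriate ``cone distance'' $\rho$, with rapid decay off the cone.

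With these two inputs, the concluding step is bilinear interpolation. The combination
\begin{equation*}
\|T_k\|_{L^2\to L^{q_5}}^2 = \|T_kT_k^*\|_{L^{q_5'}\to L^{q_5}} \le \|T_kT_k^*\|_{L^2\to L^2}^{n/(n+2)}\,\|T_kT_k^*\|_{L^1\to L^\infty}^{2/(n+2)},
\end{equation*}
with $q_5=\tfrac{2(n+2)}{n}$, together with a dyadic decomposition in $\rho$ to handle the kernel decay, produces the exponent $(n^2-1)/(n+2)$. Because the target space is the Lorentz endpoint $L^{q_5,\infty}$ rather than $L^{q_5}$, I would use a standard endpoint trick (either real interpolation at the Lorentz endpoint or a Bourgain-type summation between two nearby strong-type estimates) to upgrade to the weak-type bound in \eqref{eqn:disc q5 np}, at the cost of the single logarithmic factor $k$.

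The main obstacle is the kernel estimate near the north pole, where the cinematic curvature vanishes. There, after the rescaling of \S\ref{sec scaling}, one must work with a family of rescaled operators; for each such piece the cone rank-curvature condition \eqref{eq:rank-curv} must be verified (as is done in \S\ref{sec np}) before stationary phase can be applied. The ``imbalanced'' Heisenberg scaling (Remark \ref{rem diff scal}) further forces one to combine contributions from distinct rescaled pieces, and it is here that the almost $L^2$-orthogonality argument of \S\ref{subsubsec almost L2} becomes essential to preserve the exponent $(n^2-1)/(n+2)$ in the summation.
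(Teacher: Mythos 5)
Your overall architecture---$TT^*$, a fixed-time $L^2$ bound plus an $L^\infty$ kernel bound with decay in $|t-t'|$, a dyadic decomposition in that distance, Bourgain-type interpolation at the Lorentz endpoint, and a rescaling near the poles---is the same as the paper's (\S\ref{sec stein tomas}). But two quantitative points, which are exactly where the exponent $q_5=\tfrac{2(n+2)}{n}$ comes from, do not work as written. First, the kernel decay exponent: the cone condition \eqref{eq:rank-curv} gives a cinematic curvature matrix of rank $2n$, so stationary phase yields decay $(1+2^k|t-t'|)^{-2n/2}=(1+2^k|t-t'|)^{-n}$ (Proposition \ref{prop kernel est eq}), not $-(2n-1)/2$. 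Feeding $-(2n-1)/2$ into your interpolation scheme lands at $q=\tfrac{2(2n+1)}{2n-1}$, and feeding in the corrected $-n$ lands at $q=\tfrac{2(n+1)}{n}$; neither equals $q_5$.

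Second---and this is the essential gap---the uniform kernel bound you posit is simply not available near the poles, and the degraded bound there is what determines $q_5$. After the imbalanced rescaling of \S\ref{sec scaling}, the kernel of $T^{k,\ell}_t(T^{k,\ell}_{t'})^*$ is only bounded by $2^{-2n\ell}\bigl(1+2^{k-4\ell}|t-t'|\bigr)^{-n}$ (Proposition \ref{prop kernel est np} combined with Lemma \ref{lem TT* scaling}): the decay in $|t-t'|$ does not begin until scale $2^{-k+4\ell}$. On the dyadic shell $|t-t'|\approx 2^{-k+m}$ this gives $\min\bigl(2^{-2n\ell},\,2^{2n\ell}2^{-nm}\bigr)$, whose uniform-in-$\ell$ majorant is $2^{-nm/2}$, as in \eqref{eq skl L1-Linf}. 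It is this \emph{halved} decay rate, interpolated against the $L^2$ bound $2^{-k(2n+1)}2^{m}$ by Bourgain's trick, that lands precisely at $(\tfrac{1}{q_5'},\tfrac{1}{q_5})$ and yields $2^{-kn(2n+1)/(n+2)}$ for each $T^{k,\ell}(T^{k,\ell})^*$; with the full decay $2^{-nm}$ one would land at the different point $q=\tfrac{2(n+1)}{n}$. Your proposal never confronts this tradeoff, so the exponent arithmetic does not close. Two smaller corrections: the factor $k$ in \eqref{eqn:disc q5 np} comes from summing the $\sim k/4$ pieces $\ell$ (each contributing the same bound), not from the Lorentz-endpoint upgrade (Bourgain interpolation already gives the weak-type bound without loss); and the almost-orthogonality of \S\ref{subsubsec almost L2} is a Cotlar--Stein argument over spatial translates $\mu$ within a single fixed $\ell$ (needed because the rescaled amplitude has $x$-support of size $2^{3\ell}\times 2^{4\ell}$), not a device for combining distinct values of $\ell$.
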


For all $k\ge 1$, 
we have the following estimates
\begin{gather}\label{eqn:trivestimate}
\|A_{t}^k\|_{L^1\to L^1} + \|A_{t}^k \|_{L^\infty\to L^\infty}  \lesssim 1,
\\ \label{eqn:trivL1infestimate}
\|A_{t}^k\|_{L^1\to L^\infty}  \lesssim 2^{k}.
\end{gather}
In view of the above it suffices in what follows to consider the case of large $k$. The bounds \eqref{eqn:sptimeLpest}, \eqref{eqn:sptimeantidiagest} then follow by an interpolation argument using 
\eqref{eqn:trivestimate}, \eqref{eqn:trivL1infestimate} and the fixed time $L^2$ estimate
\begin{equation}\label{eqn:sptimeL2estimate}
\| A_{t}^k f\|_{L^2(\R^{2n+1})} \lesssim 
2^{-k \frac{2n}{2} }
\|f\|_2.
\end{equation}
\subsection{Reduction to oscillatory integral estimates}
To prove \eqref {eqn:sptimeL2estimate} we use oscillatory integral operators of the form 
\begin{equation}
    \label{eq Tk def}
    T^k_t f(x)=T^k f(x,t) := \int_{\R^{2n+1}} e^{i 2^k \Phi(x,t,y)} b(x,t,y) f(y) dy,
\end{equation}
where $\Phi$ can be of the form $\Phi^\np(x,t,y)=\oy
G(x,t,\uy)$, $\Phi^\eq(x,t,y)= y_1H_1(x,t,y',\oy),$ and $\Phi^\im(x,t,y)=\oy \oH(x,t,\uy)$, 
with $G, H_1$ and $\oH$ as defined in equations \eqref{eq np form}, \eqref{eq equator form} and \eqref{eq intermediate form}. The corresponding amplitude $b$ is compactly supported in $\bbR^{2n+1}\times\bbR\times \bbR^{2n-1}\times \bbR$, and can be of the form $b^\np, b^\eq$ or $b^\im$ as defined in \eqref{eq:Ak} (with the role of $\theta$ played by $\oy$ or $y_1$, depending on the situation).

To reduce matters to oscillatory integral operators of the above form, we define \[F_k(y)= \int_{\bbR} f(y, \bar w) e^{-i2^k\bar y\bar w} d\bar w\] and
\[\Tilde{F}_k(y)= \int_{\bbR} f(w_1, y',\oy) e^{-i2^ky_{1}w_{1}} dw_{1}. \] 
The integrals on the right can be interpreted as scaled Fourier transforms in the $\oy$ and $y_1$ variables respectively. After renaming $\bar w$ to $\oy$ and $\oy$ to $\theta$, we have
\[ A^k f(x,t)= T^k F_k (x,t)\] 
around the north pole and in the intermediate regions. Similarly, renaming $w_1$ to $y_1$ and $y_1$ to $\theta$, we obtain
\[ A^k f(x,t)= T^k \Tilde{F}_k (x,t)\] 
around the equator.
By Plancherel's theorem $\|F_k\|_2=\|\Tilde{F}_k\|_2=(2^{-k}2\pi)\|f\|_2$.  Hence \eqref {eqn:sptimeL2estimate}
follows from 
\begin{proposition} \label{prop:L2osc-est} 
For all $f\in L^2(\bbR^{2n+1})$ and $t\in [1,2]$, 
\Be\label{fix time Tkest} 
\|T^k_t f\|_{ L^2(\bbR^{2n+1})} \lc 2^{-k\frac{2n+1}{2}} \|f\|_2,
\Ee
with the implicit constant uniform in $t\in [1,2]$. 
\end{proposition}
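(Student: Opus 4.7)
\textbf{Proof plan for Proposition \ref{prop:L2osc-est}.}

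My plan is to apply H\"ormander's classical $L^2$ theorem for oscillatory integral operators (see \cite[Ch. IX.1]{Stein-harmonic}), which, for a phase $\Phi$ and an amplitude $b$ compactly supported in $\bbR^{2n+1}\times\bbR^{2n+1}$, produces the bound
\[
\Big\| \int e^{i\lambda \Phi(x,y)} b(x,y) f(y)\, dy \Big\|_{L^2_x} \lc \lambda^{-(2n+1)/2} \|f\|_{L^2},
\]
provided that the mixed Hessian $D^2_{xy}\Phi$ is a non-degenerate $(2n+1)\times(2n+1)$ matrix on the support of $b$. With $\lambda=2^k$ and dimension $2n+1$, this gives exactly the estimate \eqref{fix time Tkest}, uniformly in $t\in[1,2]$ by compactness. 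The task therefore reduces to verifying, for each of the three phases $\Phi^{\np},\Phi^{\eq},\Phi^{\im}$, that
\[
\bigl|\det D^2_{xy}\Phi^{\square}(x,t,y)\bigr| \gtrsim 1
\]
uniformly on the support of the corresponding amplitude.

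The north pole case is the most transparent. Writing $x=(\ux,\ox)$, $y=(\uy,\oy)$, I note from \eqref{eq np form} that $\partial_{\ox}G=1$ and that $G$ is independent of $\oy$, so that $D^2_{xy}\Phi^{\np}$ (with $\Phi^{\np}=\oy\,G$) is block triangular,
\[
D^2_{xy}\Phi^{\np} \;=\; \begin{pmatrix} \oy\,\partial_{\ux}\partial_{\uy}G & \partial_{\ux}G \\[2pt] 0 & 1 \end{pmatrix},
\]
and hence $\det D^2_{xy}\Phi^{\np} = \oy^{2n}\det\partial_{\ux}\partial_{\uy}G$. Since $|\oy|\sim 1$ in the support, only the $2n\times 2n$ determinant matters. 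The terms in \eqref{eq np form} that depend on $|\ux-\uy|$ contribute $O(|\ux-\uy|^2)=O(2^{-800n})$ to $\partial_{\ux}\partial_{\uy}G$ in this regime, whereas the Heisenberg bilinear term $\tfrac{1}{2}\ux^{\intercal}J\uy$ contributes exactly $\tfrac{1}{2}J^{\intercal}=-\tfrac{1}{2}J$. Because $J^2=-I_{2n}$, $J$ is invertible, and so $\partial_{\ux}\partial_{\uy}G = -\tfrac{1}{2}J + O(2^{-800n})$ has determinant of order one.

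For the equator and intermediate regions the phase is built from the implicit functions $H_1$ and $\oH$ defined through $F(x,t,y)=0$. I would compute $D^2_{xy}\Phi^{\eq}$ and $D^2_{xy}\Phi^{\im}$ by implicit differentiation, using \eqref{eq equator form}, \eqref{eq intermediate form} and \eqref{eq grad defining}; after factoring out $(\partial_{y_1}F)^{-1}$ or $(\partial_{\oy}F)^{-1}$, the non-vanishing of $\det D^2_{xy}\Phi^{\square}$ is equivalent to the non-vanishing of the Phong--Stein rotational curvature
\[
\det\begin{pmatrix} 0 & \nabla_{y}F \\ (\nabla_{x}F)^{\intercal} & D^2_{xy}F\end{pmatrix}
\]
of the defining function $F$ in \eqref{eq defining}. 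This determinant is intrinsic to the K\'oranyi sphere and independent of which coordinate has been solved for, so the three verifications amount to a single geometric fact already flagged in the remarks: the rotational curvature of $S_K$ does not vanish anywhere. The main obstacle I anticipate is the bookkeeping needed to isolate the contribution of the bilinear term $\tfrac{1}{2}\ux^{\intercal}J\uy$ inside the $2n\times 2n$ blocks produced by the implicit differentiation and to extract a lower bound that is uniform in $t\in[1,2]$ and in the position on the sphere away from the poles; the matrix and radial-function identities of \S \ref{sec matrix inv} are presumably tailored for exactly this purpose. Once quantitative non-degeneracy is in hand in all three regimes, H\"ormander's theorem delivers \eqref{fix time Tkest}.
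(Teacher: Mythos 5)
Your overall strategy is exactly the paper's: reduce \eqref{fix time Tkest} to H\"ormander's $L^2$ theorem and verify that $\det \Phi''_{x,y}$ is bounded below uniformly on the support of the amplitude, separately in the three regimes. Your north-pole computation is complete and correct: the mixed Hessian is block triangular with lower-right entry $1$, and the $2n\times 2n$ block is $\pm\tfrac12 J$ plus an $O(|\ux-\uy|^2)$ perturbation coming from $g''$, hence has determinant of order one since $J^2=-I_{2n}$. This is precisely the computation the paper carries out (in \S\ref{subsec L2 cTkl np}, in the rescaled form $\tfrac{J}{2}-2^{-2\ell}g''_\ell$, via Corollary \ref{cor matrix inv est}); the paper also observes in \S\ref{subsec L2 np} that one can avoid the Taylor-expanded graph $G$ altogether at the poles by evaluating the intermediate-region formula at $|\uy|=0$, where the relevant block degenerates to $-\tfrac{t^2}{2}J$.

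The gap is in the equator and intermediate regions, which are the technical core of the paper's proof (\S\ref{subsubsec L2 eq} and \S\ref{subsubsec IM L2}) and which you only sketch. Your reduction to the Phong--Stein/Monge--Amp\`ere determinant of $F$ is legitimate — the factors of $(\partial_{y_1}F)^{-1}$ resp.\ $(\partial_{\oy}F)^{-1}$ are bounded above and below on the respective supports, and the determinant is invariant under the group translation (this is the content of Lemma \ref{lem trans invar}, which the paper uses to reduce to $x=0$) — but "the rotational curvature of $S_K$ does not vanish" is precisely the quantitative statement that has to be \emph{proved} here, with lower bounds uniform in $t\in[1,2]$ and in the position on the sphere; it cannot be cited as a known geometric fact in a self-contained proof. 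Concretely, after reducing to $x=0$ and $y'=0$ one must compute the determinant of an explicit $(2n+1)\times(2n+1)$ matrix such as \eqref{eq rot matrix eq} (equator) or its analogue built from $X=4\uy\uy^{\intercal}+2|\uy|^2I-\tfrac{\oH}{2}J-\tfrac{|\uy|^2}{\oH}\uy(J\uy)^{\intercal}$ (intermediate region). The paper does this by a Schur-complement reduction together with the inversion formula of Lemma \ref{lem matrix inv} and Corollaries \ref{cor matrix inv est}, \ref{cor inn prod}; the key point in each case is a sign argument (the scalar $\gamma$ at the equator, and the analogous inner-product term in the intermediate region, have a favourable sign), so that the Schur factor is bounded below by $1$ and the determinant is controlled by $\det(4|H_1|^2I-\oy J)$ resp.\ $\det X$, each of which is bounded below because $16|H_1|^4+|\oy|^2\gtrsim 1$ resp.\ $4|\uy|^4+\tfrac{|\oH|^2}{4}\ge\tfrac14$. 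Without carrying out this computation (or an equivalent one for the Monge--Amp\`ere determinant of $F$ itself), the proof is incomplete in these two regimes.
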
 
The proof will be given using H\"ormander's standard  $L^2$ estimate \cite{Hormander1973} (also see \cite[Ch. IX.1]{Stein-harmonic}) combined with almost-orthogonality arguments. By the same argument, the $L^2\to L^{q_5, \infty}$ bound  
\eqref{eqn:disc q5 np} is reduced to the following estimate.
\begin{proposition}
\label{prop:Q5spacetimeTk}
Let  $q_5=\tfrac{2(n+2)}{n}$. For $f\in L^2(\bbR^{2n+1})$, we have
\Be \label{eqn:disc oio q5 np}
\| T^{k}f\|_{L^{q_5, \infty}(\R^{2n+1}\times \cZ_{k})} \lesssim
k2^{-k\frac{n(2n+1)}{2(n+2)}}\|f\|_{2}. 
\Ee
\end{proposition}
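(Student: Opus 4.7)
The plan is to apply a $TT^*$ argument in the spirit of Stein-Tomas, where the key non-trivial ingredient beyond the fixed time $L^2$ bound of Proposition \ref{prop:L2osc-est} is a sharp $L^\infty$ bound on the kernel of $T^k(T^k)^*$ coming from the cinematic curvature condition \eqref{eq:rank-curv}.

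First, I would reduce \eqref{eqn:disc oio q5 np} to the strong-type bound
\[
\|T^k\|_{L^2(\bbR^{2n+1}) \to L^{q_5}(\bbR^{2n+1}\times \cZ_k)} \lesssim 2^{-k n(2n+1)/(2(n+2))},
\]
with the loss of the factor $k$ in \eqref{eqn:disc oio q5 np} coming from an endpoint Bourgain-type real interpolation between this bound and the off-diagonal estimate of Proposition \ref{prop:fix-time-est}(ii). By duality, the strong-type bound is equivalent to controlling $\|T^k(T^k)^*\|_{L^{q_5'} \to L^{q_5}}$, whose integral kernel is
\[
K_k((x,t),(x',s)) = \int_{\bbR^{2n+1}} e^{i2^k[\Phi(x,t,y)-\Phi(x',s,y)]}\, b(x,t,y)\overline{b(x',s,y)}\,dy.
\]

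The next step is to establish two basic bounds on $T^k(T^k)^*$. The $L^2\to L^2$ bound follows from Proposition \ref{prop:L2osc-est} combined with $\#\cZ_k = 2^k$: since
\[
\|T^k f\|_{L^2(\bbR^{2n+1}\times \cZ_k)}^2 = \sum_{t\in \cZ_k} \|T_t^k f\|_{L^2}^2 \lesssim 2^k \cdot 2^{-k(2n+1)}\|f\|_2^2 = 2^{-2kn}\|f\|_2^2,
\]
one obtains $\|T^k(T^k)^*\|_{L^2\to L^2} \lesssim 2^{-2kn}$. The $L^\infty$ bound on $K_k$ is more delicate: using stationary phase in $y$, the assumption that the cone in the fibers of the canonical relation associated with $\Phi$ has the maximum possible number of non-vanishing curvatures (condition \eqref{eq:rank-curv}) allows one to integrate by parts in $n$ out of the $2n+1$ integration variables, yielding $\|K_k\|_{L^\infty} \lesssim 2^{-kn/2}$, and hence $\|T^k(T^k)^*\|_{L^1\to L^\infty}\lesssim 2^{-kn/2}$.

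With these two bounds in hand, Riesz-Thorin interpolation with parameter $\theta = 2/q_5 = n/(n+2)$ gives
\[
\|T^k(T^k)^*\|_{L^{q_5'}\to L^{q_5}} \leq \bigl(2^{-kn/2}\bigr)^{2/(n+2)} \cdot \bigl(2^{-2kn}\bigr)^{n/(n+2)} = 2^{-kn(2n+1)/(n+2)},
\]
and taking square roots produces the desired exponent $n(2n+1)/(2(n+2))$. The hard part is the $L^\infty$ kernel bound: verifying the cone curvature condition \eqref{eq:rank-curv} for each of the phases $\Phi^\np$, $\Phi^\eq$, $\Phi^\im$. This is especially subtle near the north pole, where the Kor\'anyi sphere has degenerate curvature and a dyadic scaling decomposition (developed in Section \ref{sec scaling}) must be applied before the curvature condition becomes available for the rescaled operator. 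The almost-orthogonality argument of \S\ref{subsubsec almost L2} is then needed to handle the mismatch between the input and output scalings specific to the Heisenberg convolution structure.
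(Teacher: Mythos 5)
Your overall architecture ($TT^*$ plus interpolation between an $L^2\to L^2$ bound and an $L^\infty$ kernel bound coming from cinematic curvature, with the pole region handled by the scaling decomposition) matches the paper's, and your $L^2\to L^2$ computation is correct. But there is a genuine gap at the central step: the claimed uniform bound $\|K_k\|_{L^\infty}\lesssim 2^{-kn/2}$ on the kernel of $T^k(T^k)^*$ is false. On the diagonal $t=s$, $x=x'$, the phase vanishes identically and $K_k((x,t),(x,t))=\int |b(x,t,y)|^2\,dy\approx 1$. What stationary phase together with the curvature condition \eqref{eq:rank-curv} actually yields is decay in the time separation, $|K_k((x,t),(x',s))|\lesssim (1+2^k|t-s|)^{-n}$ (Propositions \ref{prop kernel est np} and \ref{prop kernel est eq}), which only gives $2^{-kn/2}$ when $|t-s|\gtrsim 2^{-k/2}$. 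If you replace your false bound by the true uniform bound $O(1)$, Riesz--Thorin gives only $2^{-kn^2/(n+2)}$ after the square root, which is strictly weaker than the target $2^{-kn(2n+1)/(2(n+2))}$. To recover the sharp exponent you must dyadically decompose $T^k(T^k)^*$ according to $|t-s|\approx 2^{-k+m}$ (the operators $S^{k,\ell}_m$ of \eqref{eq sklm}), prove the pair of bounds \eqref{eq skl L2} (which loses $2^m$ from Cauchy--Schwarz over the $\sim 2^m$ relevant times $t'$) and \eqref{eq skl L1-Linf} (which gains $2^{-nm/2}$ from the kernel decay), and sum via Bourgain's interpolation trick. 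At $q=q_5$ the two exponents exactly balance, which is why the conclusion is only the restricted weak-type bound $L^{q_5',1}\to L^{q_5,\infty}$; your proposed strong-type Riesz--Thorin endpoint cannot close.

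Two smaller points. First, the factor $k$ in \eqref{eqn:disc oio q5 np} does not come from an endpoint interpolation with Proposition \ref{prop:fix-time-est}(ii); it comes from summing the $O(k)$ rescaled pieces $T^{k,\ell}$, $0\le\ell\le k/4$, near the poles, each of which satisfies the bound without the factor $k$. Second, the almost-orthogonality argument of \S\ref{subsubsec almost L2} is used to assemble the fixed-time $L^2$ bound for $\cT^{k,\ell}_t$ from spatially localized pieces (whose $x$-supports grow with $\ell$), not to prove the kernel estimate; the kernel estimate is a separate stationary-phase argument requiring the rank-$2n$ curvature condition for the rescaled phase $\Phi_\ell$.
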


\section{A Scaling Argument at the Poles}
\label{sec scaling}
In this section, we focus on the case when the kernel of the operator 
$T^k$ is supported around the poles (more specifically, around the north pole but the same argument works for the south pole due to symmetry about the equator). In fact, the $L^p$ improving property at $Q_4$ is completely determined by how the operator behaves around the poles. We shall decompose the kernel of the operator $T^k$ based on the distance from the north pole and use a stretching argument to precisely understand this behaviour. 

Let $\rho_0$ be a smooth radial function on $\R^{2n}$ with compact support in $\{|\uy|< 1/4\}$ such that $\rho_0(\uy)=1$ for $|\uy|\le 1/8$. Setting $\rho_1(\uy)=\rho_0(\uy/2)-\rho_0(\uy)$, $\rho_{\ell}(\uy)= \rho_1(2^{\ell}\uy)$ for $1\leq\ell<\tfrac{k}{4}$ and $\rho_{\ell}(\uy)=\rho_0(2^{\ell}\uy)$ for $\ell=\frac{k}{4}$, we define
\[T_t^{k, 0} f(x): = \int_{\R^{2n+1}} e^{i 2^k \Phi(x,t,y) } b(x,t,y)\left(1-\rho_0\left(\frac{\ux-\uy}{2t}\right)\right) f(y) dy,\,\,\, \text{ for }  \ell=0,\]
\[T_t^{k,\ell} f(x) := \int_{\R^{2n+1}} e^{i 2^k \Phi(x,t,y) } b(x,t,y)\rho_{\ell}\left(\frac{\ux-\uy}{t}\right) f(y) dy,\,\,\, \text{ for } 1\leq \ell\leq \frac{k}{4}\]
and \[T^{k,\ell} f(x, t):=T_t^{k,\ell} f(x).\]
Here 
\[\Phi=\Phi^\np:=\oy G(x,t,\uy)
=\oy\left(\overline{x}+ \frac{1}{2}\underline{x}^T J \underline{y}+t^2\tfrac{|\ux-\uy|^4}{2t^4}+t^2\frac{|\ux-\uy|^8}{t^8}R(\frac{\ux-\uy}{t})\right),\] with $R$ as defined in \eqref{eq Gerror}, and $b=b^\np$ with  $\text{supp}(b_\np)\subseteq \text{supp}(\chi_\np)\times \text{supp}(\zeta_1)$. Then $T_t^{k} f=\sum_{\ell=0}^{\frac{k}{4}}T_t^{k,\ell} f$. In fact, since $b$ is supported in the set where $\frac{|\ux-\uy|}{t}<2^{-400n}$, the above decomposition is relevant for large values of $\ell$ (say $\ell\geq 400n$).
We make a change of variables $2^{\ell}\uy\to\uy$, which gives
\[T_t^{k,\ell} f(x) = 2^{-2n\ell}\int_{\R^d} e^{i 2^k \Phi(x,t,2^{-\ell}\uy,\oy) } b(x,t,2^{-\ell}\uy,\oy)\rho_{1}(2^{\ell}\ux-\uy) f(2^{-\ell}\uy,\oy) dy.\]
Observe that
\begin{align*}
\Phi(x,t,2^{-\ell}\uy,\oy)&=\overline{y}\left(\overline{x}+ \underline{x}^T J (2^{-\ell}\underline{y})+2^{-4\ell}t^2\tfrac{|2^{\ell}\underline{x}-\underline{y}|^4}{t^4}+2^{-8\ell}t^2\tfrac{|2^{\ell}\underline{x}-\underline{y}|^8}{t^8}R(\tfrac{\underline{x}-2^{-\ell}\underline{y}}{t}) \right)\\
&=2^{-4\ell}\overline{y}\left(2^{4\ell}\overline{x}+ \frac{1}{2}\underline{x}^T 2^{3\ell}J\underline{y}+t^2\tfrac{|2^{\ell}\underline{x}-\underline{y}|^4}{t^4}+2^{-4\ell}t^2\tfrac{|2^{\ell}\underline{x}-\underline{y}|^8}{t^8}R(\tfrac{2^{-\ell}(2^{\ell}\underline{x}-\underline{y})}{t}) \right).
\end{align*}
We define
\begin{equation}
    \label{eq gldef1}
    g(\uw):=\frac{|\uw|^4}{2}\,\text{ and } \Tilde{g}_{\ell}(\uw):=|\uw|^8R(2^{-\ell}\uw),
\end{equation}
so that
\begin{equation}
    \label{eq gldef2}
    g_{\ell}(\uw):=g(\uw)+2^{-4\ell}\Tilde{g}_{\ell}(\uw).
\end{equation}
Due to condition \eqref{eq Gerror} on $R$, we have 
\begin{equation}
    \label{eq ggerror}
     \left|\frac{\partial^\beta}{\partial \uw^{\beta}}\Tilde{g}_\ell(\uw)\right|\lesssim 100n\cdot 2^{-4\ell}|\uw|^{8-|\beta|}\,\, \text{ for } |\beta|\in\{0,1,2,3\} \text{ and } |\uw|\leq\frac{1}{2}.
\end{equation}
In fact, $g_{\ell}$ can be expressed as $g_{\ell}(\uw)=u_{\ell}(|\uw|)$, where $u$ is a smooth function in one variable satisfying the condition \begin{equation}
    \label{eq homerror}
    \left|\left(\frac{d}{dr}\right)^j\left(u_{\ell}(r)-\frac{r^4}{2}\right)\right|\lesssim 100n\cdot2^{-4\ell}r^{8-j}\,\, \text{ for } j\in\{0,1,2,3\} \text{ and } |r|\leq\frac{1}{2}.
\end{equation}
We thus have
\[T_t^{k,\ell} f(2^{-3\ell}\ux,2^{-4\ell}\ox) = 2^{-2n\ell}\int_{\R^{2n+1}} e^{i 2^{k-4\ell} \Phi_{\ell}(x,t,y)} a_{\ell}(x,t,y) f(2^{-\ell}\uy,\oy) dy,\]
where
\[\Phi_{\ell}(x,t,2\uy,\oy)=\overline{y}\left(\overline{x}+ \frac{1}{2}\underline{x}^T J \underline{y}+t^2g_{\ell}(\tfrac{2^{-2\ell}\underline{x}-\underline{y}}{t}) \right),\]
and for a fixed $t\in[1,2]$, $a_{\ell}(x,t,y)=b(2^{-3\ell}\ux,2^{-4\ell}\ox,t,2^{-\ell}\uy,\oy)\rho_{1}(2^{-2\ell}\ux-\uy)$ is supported in the set where $|\ux|\lesssim 2^{3\ell}, |\ox|\lesssim 2^{4\ell}$, $\frac{1}{8}\leq|2^{-2\ell}\ux-\uy|\leq \frac{1}{2}$ (resp. $|2^{-2\ell}\ux-\uy|\leq \frac{1}{4}$) for $\ell<\frac{k}{4}$ (resp. $\ell=\frac{k}{4}$) and $|\oy|\sim 1$.
\begin{rem}
\label{rem diff scal}
Observe that the input variable $\uy$ is scaled by $2^{-\ell}$, while the output variable $\ux$ is scaled by a factor of $2^{-3\ell}$. This is due to the presence of the bilinear term involving the matrix $J$ in the phase function. This phenomenon was not present in \cite{iosevich1994maximal}, where a scaling type type argument was used to establish the $L^p$ boundedness of global maximal operators associated to families of flat, finite type curves in $\bbR^2$ (also see \cites{manna2017pl}).
\end{rem}
We introduce an operator $\cT_t^{k,\ell}$ defined by
\begin{equation}
     \label{eq cTkl def}
     \cT_t^{k,\ell} f(\ux,\ox) = \int_{\R^d} e^{i 2^{k-4\ell} \Phi_{\ell}(x,t,y)} a_{\ell}(x,t,y) f(\uy,\oy) dy
\end{equation}
and let 
\begin{equation}
\cT^{k,\ell} f(\ux,\ox, t):=\cT_t^{k,\ell} f(\ux,\ox).    
\end{equation}
 
Further, for $a,b\in \bbZ$, let $\tau_{a,b}$ be the dilation defined by
$
\tau_{a,b}(\ux,\ox)=(2^{-a}\ux,2^{-b}\ox),   
$
so that
\[T^{k,\ell}_tf=2^{-2n\ell} \cT^{k,\ell}_t(f\circ\tau_{\ell,0})\circ\tau_{-3\ell,-4\ell}.\]
The following lemma will be useful for $TT^*$ type arguments in the upcoming sections.
\begin{lemma}
\label{lem TT* scaling}
Let $a,b,c\in \bbZ$ and for $i=1,2$, let $\cU_i,U_i$ be operators defined on $\bbR^{2n+1}$ such that
\begin{equation}
    \label{eq TT* scaling}
    U_i f=2^{-2nc}\cU_i(f\circ\tau_{c,0})\circ\tau_{-a,-b}.
\end{equation}
Then for $1\leq p,q\leq \infty$,
\[\|U_1U_2^*\|_{L^p\rightarrow L^q}\leq 2^{-2n(a+c)}2^{-b}2^{(2na+b)(\frac{1}{p}-\frac{1}{q})}\|\cU_1\cU_2^*\|_{L^p\rightarrow L^q}.\]
\end{lemma}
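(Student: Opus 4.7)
The plan is to interpret the given identity \eqref{eq TT* scaling} as conjugation of $\mathcal{U}_i$ by anisotropic dilations, then extract the operator norm of $U_1U_2^*$ from that of $\mathcal{U}_1\mathcal{U}_2^*$ by tracking how dilations act on $L^p$. I would introduce the auxiliary notation $V_{a,b}f := f \circ \tau_{a,b}$, so that \eqref{eq TT* scaling} becomes $U_i = 2^{-2nc}\, V_{-a,-b}\, \mathcal{U}_i\, V_{c,0}$.

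The first routine computation is the $L^p\to L^p$ operator norm of the dilation $V_{a,b}$. A change of variables $(\underline{u},\overline{u}) = (2^{-a}\underline{x},2^{-b}\overline{x})$ yields $\|V_{a,b}f\|_{L^p}^p = 2^{2na+b}\|f\|_{L^p}^p$, hence $\|V_{a,b}\|_{L^p\to L^p} = 2^{(2na+b)/p}$. The second routine computation is the adjoint: pairing $\langle V_{a,b}f,g\rangle$ and applying the same change of variables gives $V_{a,b}^* = 2^{2na+b}\, V_{-a,-b}$. Using these two facts I can compute
\[
U_i^* \;=\; 2^{-2nc}\, V_{c,0}^*\, \mathcal{U}_i^*\, V_{-a,-b}^* \;=\; 2^{-2na-b}\, V_{-c,0}\, \mathcal{U}_i^*\, V_{a,b},
\]
where the prefactor $2^{-2nc}\cdot 2^{2nc}\cdot 2^{-2na-b}$ collapses to $2^{-2na-b}$.

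Next I compose $U_1U_2^*$. The key cancellation is $V_{c,0}V_{-c,0} = I$, which one verifies directly from the definition of $\tau_{c,0}$. This produces the clean identity
\[
U_1 U_2^* \;=\; 2^{-2n(a+c)-b}\, V_{-a,-b}\, \mathcal{U}_1\mathcal{U}_2^*\, V_{a,b}.
\]
Taking operator norms, using submultiplicativity and the formulas $\|V_{a,b}\|_{L^p\to L^p} = 2^{(2na+b)/p}$ and $\|V_{-a,-b}\|_{L^q\to L^q} = 2^{-(2na+b)/q}$, gives
\[
\|U_1U_2^*\|_{L^p\to L^q} \;\leq\; 2^{-2n(a+c)-b}\, 2^{(2na+b)(\frac{1}{p}-\frac{1}{q})}\, \|\mathcal{U}_1\mathcal{U}_2^*\|_{L^p\to L^q},
\]
which is exactly the claimed bound.

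There is no real obstacle here: the lemma is essentially bookkeeping for anisotropic dilations acting on $L^p$, together with the elementary cancellation $V_{c,0}V_{-c,0}=I$. The only points requiring care are the correct exponent $2na+b$ arising from the Jacobian of $\tau_{a,b}^{-1}$ (reflecting that $\underline{x}\in\mathbb{R}^{2n}$ contributes $2na$ while $\overline{x}\in\mathbb{R}$ contributes $b$), and keeping straight the powers of $2$ in the adjoints so that the intermediate $2^{\pm 2nc}$ factors telescope. This lemma will then be applied in the sequel with $(a,b,c)=(3\ell,4\ell,\ell)$ to transfer estimates for the rescaled operators $\mathcal{T}_t^{k,\ell}$ back to $T_t^{k,\ell}$ in the $TT^*$ arguments of the next sections.
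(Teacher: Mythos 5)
Your proof is correct and follows essentially the same route as the paper: compute $U_2^*$ from \eqref{eq TT* scaling} (the paper writes $U_2^*f=2^{-(2na+b)}\cU_2^*(f\circ\tau_{a,b})\circ\tau_{-c,0}$, which is exactly your adjoint formula), cancel the inner dilations in the composition $U_1U_2^*$, and account for the Jacobian factors $2^{\pm(2na+b)/p}$ when taking norms. The $V_{a,b}$ notation is just a cosmetic repackaging of the paper's direct substitution.
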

\begin{proof}
As a consequence of \eqref{eq TT* scaling}, we have \[U_2^*f= 2^{-(2na+b)}\cU_2^*(f\circ\tau_{a,b})\circ\tau_{-c,0}.
\]
It follows that \begin{align*}
U_1U_2^*f=2^{-2nc}\cU_1[(U_2^*f)\circ\tau_{c,0}]\circ\tau_{-a,-b}
&=2^{-2n(a+c)}2^{-b}\cU_1[\cU_2^*(f\circ\tau_{a,b})\circ\tau_{-c,0}\circ\tau_{c,0}]\circ\tau_{-a,-b}\\
&=2^{-2n(a+c)}2^{-b}\cU_1\cU_2^*(f\circ\tau_{a,b})\circ\tau_{-a,-b}.
\end{align*}
Taking $L^q$ norms on both sides and changing variables on the right hand side, we get
\[\|U_1U_2^*f\|_{L^q}=2^{-2n(a+c)}2^{-b}2^{-\frac{2na+b}{q}}\|\cU_1\cU_2^*(f\circ\tau_{a,b})\|_{L^q}\leq 2^{-2n(a+c)}2^{-b}2^{-\frac{2na+b}{q}}\|\cU_1\cU_2^*\|_{L^p\to L^q}\|f\circ\tau_{a,b}\|_{L^p}. \]
Since $\|f\circ\tau_{a,b}\|_{L^p}=2^{\frac{2na+b}{p}}\|f\|_{L^p}$, this yields the desired estimate.
\end{proof}
To prove \eqref{eqn:disc oio q5 np}, we shall also need a fixed time $L^2$ estimate for each scaled piece $\cT^{k,\ell}_t$.
\begin{proposition}
\label{prop:L2cTkl-est} Let $\cT^{k,\ell}_t$ be as defined in \eqref{eq cTkl def}. For all $f\in L^2(\bbR^{2n+1})$ and $t\in [1,2]$, 
\begin{equation*}
\label{fix time cTklest} 
\|\cT^{k,\ell}_t f\|_{ L^2(\bbR^{2n+1})} \lc 2^{-(k-4\ell)\frac{2n+1}{2}} \|f\|_2,
\end{equation*}
with the implicit constant uniform in $t\in [1,2]$. 
\end{proposition}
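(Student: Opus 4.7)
The plan is to derive the estimate from H\"ormander's classical $L^2$ bound for oscillatory integral operators, combined with an almost-orthogonality argument that absorbs the non-compact support of the amplitude $a_\ell$. Recall that $a_\ell$ is supported where $|\ux|\lesssim 2^{3\ell}$, $|\ox|\lesssim 2^{4\ell}$, $|2^{-2\ell}\ux - \uy|\sim 1$ (which forces $|\uy|\lesssim 2^{\ell}$), and $|\oy|\sim 1$. Since the ambient dimension is $d = 2n+1$ and the large parameter is $\lambda = 2^{k-4\ell}$, the target bound $\lambda^{-(2n+1)/2}$ is exactly what H\"ormander's theorem delivers on a fixed compact set. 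The task therefore reduces to (i) establishing a uniform non-degeneracy of the rotational curvature $\det \partial_{x,y}^2 \Phi_\ell$ on $\supp a_\ell$, and (ii) handling the stretching of the $x$-support.

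For (i), which is the content of \S\ref{sec np}, the key observation is that the phase
\[
\Phi_\ell(x,t,y) = \oy\Big(\ox + \tfrac12 \ux^T J\uy + t^2 g_\ell\big(\tfrac{2^{-2\ell}\ux-\uy}{t}\big)\Big)
\]
already has a full-rank mixed Hessian from the terms $\oy\ox$ and $\tfrac12 \oy\ux^T J\uy$, while the contribution from $g_\ell$ is a uniformly controlled perturbation on $\supp a_\ell$ (the estimates \eqref{eq ggerror}--\eqref{eq homerror} ensure this uniformly in $\ell$). For (ii) I would introduce a partition of unity $\{\psi_\alpha\}$ in the $(\ux,\ox)$ variable indexed by a lattice of unit cubes, writing $\cT^{k,\ell}_t = \sum_\alpha \cT^{k,\ell}_{t,\alpha}$ with each $\cT^{k,\ell}_{t,\alpha}$ having an amplitude supported in a fixed compact set (once $\ux$ lies in a unit cube, the constraint $|2^{-2\ell}\ux - \uy|\sim 1$ confines $\uy$ to a unit box, while $\oy$ is already in a compact set). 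H\"ormander's theorem then yields $\|\cT^{k,\ell}_{t,\alpha}\|_{L^2\to L^2}\lesssim 2^{-(k-4\ell)(2n+1)/2}$ uniformly in $\alpha$ and in $t\in[1,2]$.

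To reassemble the pieces one applies the Cotlar--Stein lemma, which requires showing that $\cT^{k,\ell}_{t,\alpha}(\cT^{k,\ell}_{t,\alpha'})^*$ and $(\cT^{k,\ell}_{t,\alpha})^* \cT^{k,\ell}_{t,\alpha'}$ decay rapidly for incompatible index pairs. For $T_\alpha T_{\alpha'}^*$ the $y$-supports overlap only when $|\ux_\alpha - \ux_{\alpha'}|\lesssim 2^{2\ell}$, and on this compatible range the $\uy$-gradient of the phase difference $\Phi_\ell(x,t,y) - \Phi_\ell(x',t,y)$ has size $\sim|\ux-\ux'|$ thanks to the symplectic term, while the $\oy$-derivative has size $\sim \max(|\ox-\ox'|,|\ux-\ux'|\cdot 2^{\ell})$. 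Repeated integration by parts in $(\uy,\oy)$ then produces rapid decay in the appropriate anisotropic distance between the cubes. A symmetric analysis (this time exploiting non-stationarity of the phase in $x$ inside $T_\alpha^* T_{\alpha'}$) handles the dual product. This is precisely the almost $L^2$ orthogonality carried out in \S\ref{subsubsec almost L2}.

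I expect the main obstacle to be the execution of this almost-orthogonality step. Its delicacy stems from the ``imbalanced scaling'' highlighted in Remark~\ref{rem diff scal}: a unit change in $\ux$ corresponds (through $\tfrac12 \oy\ux^T J\uy$) to a $2^{2\ell}$-shift of the $\uy$-stationary point, so the number of $\alpha'$ compatible with a given $\alpha$ is $\sim 2^{4n\ell}$ rather than $O(1)$, and one must extract enough integration-by-parts decay to make the Cotlar--Stein sums converge with a constant independent of $\ell$. Once this bookkeeping is in place, the Cotlar--Stein lemma combines the $2^{-(k-4\ell)(2n+1)/2}$ bounds on individual pieces into the claimed uniform estimate on $\cT^{k,\ell}_t$.
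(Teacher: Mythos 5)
Your proposal is correct and follows essentially the same route as the paper: a unit-cube partition of unity in $x$, H\"ormander's $L^2$ theorem on each localized piece (with the non-degeneracy of $(\Phi_\ell)''_{x,y}$ coming from the symplectic term $\tfrac12\oy\ux^\intercal J\uy$ plus a controlled $g_\ell$-perturbation), and Cotlar--Stein reassembly via integration by parts in $y$ against the lower bound $|\nabla_y[\Phi_\ell(x,t,y)-\Phi_\ell(\breve x,t,y)]|\gtrsim|x-\breve x|$. The only cosmetic difference is that the paper disposes of $\cT_\mu^*\cT_\nu$ trivially (its kernel contains $\Tilde\rho(x-\mu)\Tilde\rho(x-\nu)$, so only $O(1)$ index pairs survive), whereas you propose an unnecessary integration by parts in $x$ for that factor.
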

The above proposition shall be proven in \S \ref{subsec L2 np}.
Combining it with Lemma \ref{lem TT* scaling} for $a=3\ell, b=4\ell$ and $c=\ell$ yields the following.
\begin{corollary}
\label{cor Tkl TT* L2}
For $t,t'\in [1,2]$,
\[\|T^{k,\ell}_t(T^{k,\ell}_{t'})^*\|_{L^2(\bbR^{2n+1})\rightarrow L^2(\bbR^{2n+1})}\lesssim 2^{-2n(3\ell+\ell)}2^{-4\ell}2^{-(k-4\ell)(2n+1)}=2^{-k(2n+1)},\]

\end{corollary}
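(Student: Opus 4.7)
The plan is a direct two-step deduction from the two results cited before the corollary. First, I would apply the fixed time $L^2$ bound of Proposition \ref{prop:L2cTkl-est} together with its adjoint form (and the uniformity in $t\in [1,2]$) to obtain the $TT^*$-type estimate
\begin{equation*}
\bigl\|\cT^{k,\ell}_t(\cT^{k,\ell}_{t'})^{*}\bigr\|_{L^2\to L^2}
\le \|\cT^{k,\ell}_t\|_{L^2\to L^2}\,\|\cT^{k,\ell}_{t'}\|_{L^2\to L^2}
\lesssim 2^{-(k-4\ell)(2n+1)}
\end{equation*}
uniformly for $t,t'\in [1,2]$.

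Next, I would transfer this bound back to the unrescaled operators $T^{k,\ell}_{t}$. The identity
\[T^{k,\ell}_{t}f = 2^{-2n\ell}\,\cT^{k,\ell}_{t}(f\circ\tau_{\ell,0})\circ\tau_{-3\ell,-4\ell}\]
stated earlier in this section puts $T^{k,\ell}_{t}$ and $T^{k,\ell}_{t'}$ in the form required by Lemma \ref{lem TT* scaling} with the same parameters $c=\ell$, $a=3\ell$, $b=4\ell$ for both operators. Applying that lemma with $p=q=2$ (so that the factor $2^{(2na+b)(1/p-1/q)}$ is trivial) gives
\[\bigl\|T^{k,\ell}_t (T^{k,\ell}_{t'})^{*}\bigr\|_{L^2\to L^2}
\le 2^{-2n(a+c)}\,2^{-b}\,\bigl\|\cT^{k,\ell}_t(\cT^{k,\ell}_{t'})^{*}\bigr\|_{L^2\to L^2}
\lesssim 2^{-8n\ell}\,2^{-4\ell}\,2^{-(k-4\ell)(2n+1)}.\]

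Finally, a line of arithmetic reconciles the exponent:
\[-8n\ell - 4\ell - (k-4\ell)(2n+1) = -k(2n+1) + \bigl(-8n\ell - 4\ell + 8n\ell + 4\ell\bigr) = -k(2n+1),\]
which is exactly the claimed bound. No step here poses a genuine obstacle; the only thing to verify carefully is that both $T^{k,\ell}_t$ and $T^{k,\ell}_{t'}$ satisfy the relation \eqref{eq TT* scaling} with the \emph{same} triple $(a,b,c)$, so that Lemma \ref{lem TT* scaling} applies with $U_1=T^{k,\ell}_t,\,U_2=T^{k,\ell}_{t'},\,\cU_1=\cT^{k,\ell}_t,\,\cU_2=\cT^{k,\ell}_{t'}$. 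This is clear from the derivation preceding \eqref{eq cTkl def}, where the rescaling depends only on $\ell$ and not on $t$.
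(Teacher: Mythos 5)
Your argument is correct and is exactly the paper's route: the paper proves this corollary by combining Proposition \ref{prop:L2cTkl-est} (via submultiplicativity of the operator norm) with Lemma \ref{lem TT* scaling} for $a=3\ell$, $b=4\ell$, $c=\ell$, $p=q=2$. Your additional check that the rescaling identity holds with the same triple $(a,b,c)$ for both $t$ and $t'$ is the right detail to verify and is indeed immediate since the scaling depends only on $\ell$.
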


\begin{rem}
The above corollary implies that $\|T^{k,\ell}_t\|_{L^2(\bbR^{2n+1})\rightarrow L^2(\bbR^{2n+1})}\lesssim 2^{-k(\frac{2n+1}{2})}$.
If we sum this up for $0\leq \ell\leq \frac{k}{4}$, we would obtain the fixed time $L^2$ estimate \eqref{fix time Tkest} for $T^k$ around the poles but up to a logarithmic loss in $2^k$. However, as we shall see in \S\ref{subsec L2 np}, the rotational curvature remains non-vanishing at the poles and thus \eqref{fix time Tkest} can be directly obtained without incurring any logarithmic loss for the estimates at $Q_1, Q_2$ and $Q_3$.
\end{rem}

\section{A Stein-Tomas Argument}
\label{sec stein tomas}
In this section, we shall reduce the estimate 
\eqref{eqn:disc oio q5 np}, for the operators $T^k$ (around the equator and in the intermediate region) and $T^{k,\ell}$ (around the poles), to an $L^{q_5', 1}\to L^{q_5, \infty}$ estimate for the respective operator precomposed with its adjoint via the $TT^*$ technique. Using a Stein-Tomas type argument, the said estimate will then be obtained by interpolating between an $L^2\to L^2$ and an $L^1\to L^\infty$ estimate. We first give the details when the kernel of $T^k$ is supported around the poles (more precisely, the north pole), where the argument needs to be run separately for each piece $T^{k,\ell}$. 

\subsection{Around the North Pole}
Since $T^k f=\sum_{\ell=0}^{\frac{k}{4}}T^{k,\ell} f$, to prove \eqref{eqn:disc oio q5 np}, it suffices to show 
\begin{equation*}
\| T^{k, \ell}\|_{L^2(\bbR^{2n+1})\to L^{q_5, \infty}(\R^{2n+1}\times \cZ_{k})} \lesssim
2^{-k\frac{n(2n+1)}{2(n+2)}}  
\end{equation*}
for each $0\leq \ell\leq \frac{k}{4}$. Using a $TT^*$ argument (and the fact that the dual space of $L^{q', 1}$ is $L^{q, \infty}$), the above estimate is a consequence of 
\begin{equation}
    \label{eq St-To l piece}
    \| T^{k, \ell} (T^{k, \ell})^*\|_{L^{q_5', 1}(\R^{2n+1}\times \cZ_{k})\to L^{q_5, \infty}(\R^{2n+1}\times \cZ_{k})} \lesssim
2^{-k\frac{n(2n+1)}{(n+2)}}.
\end{equation}
We define the operator $S^{k,\ell}$ acting on functions $g:\R^{2n+1}\times \mathcal{Z}_{k}\to \bbC$ by
\begin{equation}
    \label{eq skl}
    S^{k,\ell}g(x,t) := \sum_{t'\in\mathcal{Z}_{k}} T^{k,\ell}_{t} (T^{k,\ell}_{t'})^* [g(\cdot, t')](x).
\end{equation}
Then \eqref{eq St-To l piece} follows from 
\begin{equation}
    \label{eq St-To Sl piece}
    \| S^{k, \ell}g\|_{L^{q_5, \infty}(\R^{2n+1}\times \cZ_{k})} \lesssim 2^{-k\frac{n(2n+1)}{(n+2)}}\|g\|_{L^{q_5', 1}(\R^{2n+1}\times \cZ_{k})}.
\end{equation}    
For $m\ge 0$ and $t_{\nu}\in\mathcal{Z}_{k}$, let
\[ \mathcal{Z}_{k}^m(t) = \{ t'\in \mathcal{Z}_{k}\,:\,2^{-k+m}\le |t-t'|\le 2^{-k+m+1} \}. \]
Note that $\mathcal{Z}_{k}^m(\nu)$ is empty, if $m>k+4$. Further, we define
\begin{equation}
    \label{eq sklm}
    S^{k,\ell}_m g(x,t) := \sum_{t'\in\mathcal{Z}_{k}^m(t)} T^{k,\ell}_t (T^{k,\ell}_{t'})^* [g(\cdot, t')](x). 
\end{equation}
We observe 
\[\left(\tfrac{1}{q_5^\prime},\tfrac{1}{q_5},-\tfrac{n(2n+1)}{n+2},0\right)=\vartheta\left(\tfrac{1}{2},\tfrac{1}{2},-(2n+1),1\right)+(1-\vartheta)\left(1,0,0,-\tfrac{n}{2}\right),\]
with $\tfrac{\vartheta}{2}=\tfrac{1}{q_5}=\tfrac{n}{2(n+2)}$. Thus, using the Bourgain interpolation trick for $S^{k,\ell}=\sum_{m\geq 0}S^{k,\ell}_m$,  \eqref{eq St-To Sl piece} is a consequence of 
\begin{equation}
     \label{eq skl L2}
     \| S^{k, \ell}_m g\|_{L^{2}(\R^{2n+1}\times \cZ_{k})} \lesssim 2^{-k(2n+1)}2^m\|g\|_{L^2(\R^{2n+1}\times \cZ_{k})}
\end{equation}
and
\begin{equation}
\label{eq skl L1-Linf}
\| S^{k, \ell}_m g\|_{L^{\infty}(\R^{2n+1}\times \cZ_{k})} \lesssim 2^{-\frac{nm}{2}}\|g\|_{L^1(\R^{2n+1}\times \cZ_{k})}.    
\end{equation}
To show \eqref{eq skl L2}, using the Cauchy-Schwarz inequality, we have
\begin{align*}
\|S^{k,\ell}_mg\|_{L^2(\mathbb{R}^{2n+1}\times\cZ_{k})}
&=\left(\sum_{t\in \cZ_{k}}\int|\sum_{t'\in\mathcal{Z}_{k}^m(t)} T^{k,\ell}_{t} (T^{k, \ell}_{t'})^* [g(\cdot, t')](x)|^2\,dx\right)^{1/2}\\
 &\leq \left(\sum_{t\in \cZ_{k}}\#(\mathcal{Z}_{k}^m(t))\int\sum_{t'\in\mathcal{Z}_{k}^m(t)} |T^{k,\ell}_{t} (T^{k,\ell}_{t'})^* [g(\cdot, t')](x)|^2\,dx\right)^{1/2}\\
 &\lesssim 2^{-k(2n+1)}2^{m}\|g\|_{L^2(\mathbb{R}^{2n+1}\times\cZ_{k})},
\end{align*}
where we have used Corollary \ref{cor Tkl TT* L2} and the fact that
$\#(\mathcal{Z}_{k}^m(t))\sim 2^{m}$ for all $t\in \cZ_{k}$.

To prove \eqref{eq skl L1-Linf}, we need the following estimate on the kernel of the operator $\cT^{k,\ell}_t(\cT^{k,\ell}_{t'})^*$.
\begin{proposition}
\label{prop kernel est np}
Let $\cK^{k,\ell}_{t,t'}$ denote the kernel of $\cT^{k,\ell}_t(\cT^{k,\ell}_t)^*$, where $\cT^{k,\ell}_t$ and $\cT^{k,\ell}_{t'}$ are as defined in \eqref{eq cTkl def} for $t,t'\in [1,2]$. Then for $0\leq \ell<\frac{k}{4}$,

\begin{equation}
    \label{eq cKkl L1Linf}
    \|\cK^{k,\ell}_{t,t'}\|_{L^\infty(\bbR^{2n+1})}\lesssim (1+2^{k-4\ell}|t-t'|)^{-n}.
\end{equation}
\end{proposition}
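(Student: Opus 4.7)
The kernel takes the form
\begin{equation*}
\cK^{k,\ell}_{t,t'}(x,x')=\int_{\bbR^{2n+1}} e^{i\lambda\, \oy\, h(\uy)}\, a_\ell(x,t,y)\,\overline{a_\ell(x',t',y)}\,dy,
\end{equation*}
with $\lambda:=2^{k-4\ell}$ and $h(\uy):=G_{t,\ell}(x,\uy)-G_{t',\ell}(x',\uy)$, where $G_{t,\ell}(x,\uy):=\ox+\tfrac12\ux^\intercal J\uy+t^2 g_\ell((2^{-2\ell}\ux-\uy)/t)$. The crucial structural feature is that $\Phi_\ell$ is linear in $\oy$. The trivial estimate $|\cK^{k,\ell}_{t,t'}|\lc 1$ settles the regime $\lambda|t-t'|\lc 1$, so it remains to prove $|\cK^{k,\ell}_{t,t'}|\lc(\lambda|t-t'|)^{-n}$ when $\lambda|t-t'|\gc 1$.

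The plan is to apply stationary and non-stationary phase in $\uy$, uniformly for $\oy\sim 1$, after which the $\oy$-integral over a bounded interval contributes only a constant. The central computation is the Hessian: writing $\uw:=2^{-2\ell}\ux-\uy$ and using $g_\ell(\uw)=|\uw|^4/2+2^{-4\ell}\tilde g_\ell(\uw)$ together with the derivative bounds \eqref{eq ggerror} on $\tilde g_\ell$, one finds
\begin{equation*}
\partial_\uy^2 h=(t^{-2}-t'^{-2})\bigl(2|\uw|^2 I+4\uw\uw^\intercal\bigr)+E,
\end{equation*}
where $E$ collects error contributions from $\uw-\uw'=2^{-2\ell}\delta\ux$ and from the correction $\tilde g_\ell$; by the mean value theorem and \eqref{eq ggerror}, these errors are dominated by the principal term in all regimes relevant below. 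The amplitude $a_\ell$ is supported on the shell $|\uw|\in[1/8,1/2]$ (which is precisely what the hypothesis $\ell<k/4$ enforces), on which the principal matrix $2|\uw|^2 I+4\uw\uw^\intercal$ has determinant bounded below; hence the principal Hessian has all $2n$ eigenvalues of size $\sim|t-t'|$ and determinant $\sim|t-t'|^{2n}$.

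The gradient $\nabla_\uy h=\tfrac12 J^\intercal\delta\ux+2|\uw|^2\uw(t'^{-2}-t^{-2})+\text{err}$ vanishes in the support only when $|\uw|\sim(|\delta\ux|/|t-t'|)^{1/3}$ lies in $[1/8,1/2]$, i.e., only when $|\delta\ux|\sim|t-t'|$. In the \textbf{non-stationary regime}, where $|\delta\ux|\ll|t-t'|$ (the quartic part dominates the gradient with $|\nabla_\uy h|\sim|t-t'|$) or $|\delta\ux|\gg|t-t'|$ (the linear part dominates with $|\nabla_\uy h|\sim|\delta\ux|\geq|t-t'|$), iterated integration by parts in $\uy$ yields arbitrary polynomial decay $(\lambda|t-t'|)^{-N}$ for any $N$. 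In the \textbf{stationary regime} $|\delta\ux|\sim|t-t'|$, a non-degenerate critical point $\uy_c$ lies in the support for the appropriate parameter alignment, and applying the classical stationary phase lemma in $\bbR^{2n}$ to the phase $\lambda\oy h(\uy)$ with Hessian of determinant $\sim\lambda^{2n}|t-t'|^{2n}$ (for $\oy\sim 1$) yields the bound $(\lambda|t-t'|)^{-n}$ uniformly in $\oy$. Integrating in $\oy$ then completes the proof.

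The main technical obstacle is executing the transition between the non-stationary and stationary regimes uniformly in the parameters, which I will handle via a dyadic decomposition in $|\delta\ux|/|t-t'|$ combined with smooth cut-offs around the critical locus. The fundamental algebraic input is the non-degeneracy of $2|\uw|^2 I+4\uw\uw^\intercal$ on the shell $|\uw|\in[1/8,1/2]$ secured by the hypothesis $\ell<k/4$; this is what makes the Hessian determinant scale cleanly as $|t-t'|^{2n}$ and ultimately produces the exponent $n$ in the claim.
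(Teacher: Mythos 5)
Your proposal is correct, but it takes a genuinely different route from the paper. The paper does not estimate the $TT^*$ kernel by hand: it verifies the cinematic curvature condition \eqref{eq:rank-curv} (computing the normal vector $N$, the second derivatives $\Xi^\ell_{ij}$, and showing the $2n\times 2n$ minor of $\mathscr C^\ell$ is $2\alpha_{2n+2}\,g''(\uw)/\oy$ up to $O(2^{-2\ell})$ errors) and then invokes the kernel estimate of \cite[Proposition 3.4]{MSS93}/Stein's theorem, together with a separate discussion of why the constants in that theorem are uniform in $\ell$ (perturbative control of $\Phi_\ell$, the unbounded $x$-support handled via the lower bound on $\nabla_y[\Phi_\ell(x,t,y)-\Phi_\ell(\Breve x,\Breve t,y)]$). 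You instead exploit the special structure $\Phi_\ell=\oy\, G$ to run stationary/non-stationary phase directly in $\uy$ with $\oy\sim 1$ as a harmless parameter; the decisive algebraic input is the same in both arguments, namely $\det g''(\uw)=\det(2|\uw|^2I+4\uw\uw^\intercal)=(2|\uw|^2)^{2n-1}\cdot 6|\uw|^2\gc 1$ on the shell $|\uw|\in[1/8,1/2]$ secured by $\ell<k/4$. Your route is more self-contained and makes the exponent $n$ transparent (half the rank of the $\uy$-Hessian of $h$, which scales like $|t-t'|$); the paper's route is shorter given the cited machinery and reuses the same framework verbatim in the equatorial and intermediate regions, where the phase is not linear in a single coordinate. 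Two points you should make explicit when executing the plan: first, the error matrix $E$ and the gradient errors must be recorded as mean-value differences of size $O(2^{-2\ell}|\delta\ux|+2^{-4\ell}|t-t'|)$ rather than absolute constants, and their domination by the principal terms uses that only $\ell\gc 400n$ is relevant (the cutoff $\rho_\ell((\ux-\uy)/t)\,b$ vanishes identically for smaller $\ell$), so that $2^{-2\ell}$ beats every implicit constant; second, in the regime $|\delta\ux|\approx|t-t'|$ you should apply the parameter-uniform version of stationary phase (nondegenerate Hessian and $C^N$ bounds on $h/|t-t'|$ suffice, with no need to locate a critical point in the support), which is standard but is precisely the step the paper outsources to \cite{MSS93}.
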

Postponing the proof of the above proposition to \S \ref{subsec ker est np}, we use it for now to prove \eqref{eq skl L1-Linf}.
The above estimate implies that $\|\cT^{k,\ell}_t(\cT^{k,\ell}_{t'})^*\|_{L^1(\bbR^{2n+1})\rightarrow L^\infty(\bbR^{2n+1})}\lesssim (1+2^{k-4\ell}|t-t'|)^{-n}$. Using Lemma \ref{lem TT* scaling} with $a=3\ell, b=4\ell, c=\ell$, we obtain
\begin{align*}
\|T^{k,\ell}_t(T^{k,\ell}_{t'})^*\|_{L^1(\bbR^{2n+1})\rightarrow L^{\infty}(\bbR^{2n+1})}
&\lesssim 2^{-2n(3\ell+\ell)}2^{-4\ell}2^{(6n+4)\ell}(1+2^{k-4\ell}|t-t'|)^{-n}\\
&= 2^{-2n\ell}(1+2^{k-4\ell}|t-t'|)^{-n}.
\end{align*}

For $m\geq 4\ell$, using the estimate above, we have
\[\|S^{k,\ell}_m g\|_{L^1(\bbR^{2n+1}\times\cZ_k)\rightarrow L^{\infty}(\bbR^{2n+1}\times\cZ_k)}
\lesssim 2^{-2n\ell}(2^{k-4\ell}\times 2^{-k+m})^{-n}=2^{2n\ell}2^{-nm}\leq 2^{-\frac{nm}{2}}.\]
On the other hand, for $m\leq 4\ell$ or $\ell=\frac{k}{4}$, we can use the trivial bound of $2^{-2n\ell}$ (using the fact that $|\mathcal{K}^{k, \ell}_{t,t'}|\lesssim 1$ for $\ell=\frac{k}{4}$) to conclude that
\[\|S^{k,\ell}_m\|_{L^1(\bbR^{2n+1}\times\cZ_k)\rightarrow L^{\infty}(\bbR^{2n+1}\times\cZ_k)}
\lesssim 2^{-2n\ell}\leq 2^{-\frac{nm}{2}}.\]
This implies \eqref{eq skl L1-Linf}, and concludes the proof of \eqref{eqn:disc q5 np}.
\subsection{Around the Equator and in the Intermediate Region}
The proof of estimate \eqref{eqn:disc oio q5 np} when the operator $T^k_t$ has kernel supported away from the north pole proceeds in the exact manner as above. The following replaces Proposition \ref{prop kernel est np} in this setting.
\begin{proposition}
\label{prop kernel est eq}
Let $K^{k}_{t,t'}$ denote the kernel of $T^{k}_t(T^{k}_t)^*$, where $T^{k}_t$ and $T^{k}_{t'}$ are as defined in \eqref{eq Tk def} for $t,t'\in \cZ_k$, with $\Phi=\Phi^\eq$ or $\Phi=\Phi^\im$. Then  
    \begin{equation}
    \label{eq Kk L1Linf}
    \|K^{k}_{t,t'}\|_{L^\infty(\bbR^{2n+1})}\lesssim (1+2^{k}|t-t'|)^{-n}.
\end{equation}
\end{proposition}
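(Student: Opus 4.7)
The plan is to follow the same template as in Proposition~\ref{prop kernel est np}, but the argument will be more direct since both the rotational curvature and the cinematic curvature condition~\eqref{eq:rank-curv} remain non-degenerate uniformly across the equator and intermediate regions (as established in \S\ref{sec eq im}); in particular, no rescaling will be needed. Expanding the composition, the kernel takes the form
\[
K^{k}_{t,t'}(x,x') = \int_{\R^{2n+1}} e^{i 2^k \Psi(y)}\, b(x,t,y)\,\overline{b(x',t',y)}\, dy, \qquad \Psi(y) := \Phi(x,t,y) - \Phi(x',t',y),
\]
with $\Phi \in \{\Phi^\eq, \Phi^\im\}$ and $b$ of the corresponding type. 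In the low-frequency regime $2^k|t-t'|\leq 1$, the trivial bound $|K^{k}_{t,t'}(x,x')|\lesssim 1$, provided by the compact support and boundedness of $b$, is already of the required size.

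In the remaining regime $2^k|t-t'|>1$, I would gain the factor $(2^k|t-t'|)^{-n}$ via non-stationary phase in the $y$ variable. Specifically, the plan is to construct, for each choice of parameters $(x,t)$ and $(x',t')$ in the support of the amplitudes, a smooth $y$-direction $V = V(x,x',t,t',y)$ along which
\[
|\langle V(y), \nabla_y \Psi(y)\rangle| \gtrsim |t-t'|
\]
holds uniformly on the support of $b(x,t,\cdot)\overline{b(x',t',\cdot)}$. Given such a $V$, the transport operator $L := (i 2^k\langle V, \nabla_y \Psi\rangle)^{-1} \langle V, \nabla_y\rangle$ preserves the oscillatory factor, and $n$ iterated integrations by parts transfer all derivatives onto the amplitude. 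Each step contributes a factor of $(2^k|t-t'|)^{-1}$, and since $b$ is smooth and compactly supported, all boundary terms vanish and all derivatives of the amplitude remain uniformly bounded, yielding the desired bound $|K^{k}_{t,t'}(x,x')| \lesssim (2^k|t-t'|)^{-n}$.

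The core of the argument, and the step I expect to be the main obstacle, is the construction of $V$. Writing
\[
\nabla_y \Psi(y) = \int_0^1 \Bigl[(t-t')\, \partial_t\nabla_y\Phi + (x-x')\cdot \partial_x\nabla_y\Phi\Bigr]_{(x_s,t_s,y)}\,ds
\]
along the segment $(x_s,t_s)=(1-s)(x,t)+s(x',t')$, the naive choice of $V$ aligned with $\partial_t\nabla_y\Phi$ fails in general, since the $(x-x')$-piece can cancel the $(t-t')$-piece in that direction. The cinematic curvature condition~\eqref{eq:rank-curv} is what must be invoked here: it ensures that on the canonical relation associated with $\Phi$, the temporal direction is independent of the spatial ones modulo the characteristic direction, so that $V$ may be chosen in (a suitable projection of) a complement to the spatial variations, guaranteeing that only the $(t-t')$-term survives in $\langle V, \nabla_y\Psi\rangle$ to leading order. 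Carrying out this construction, together with verifying uniform bounds on $V$ and its derivatives, will proceed in parallel with the corresponding step in the proof of Proposition~\ref{prop kernel est np} (to be given in \S\ref{subsec ker est np}), but simplified by the absence of any degeneration in the curvature.
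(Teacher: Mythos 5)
There is a genuine gap, and it lies exactly where you flag ``the main obstacle'': the vector field $V$ you propose cannot exist. The bound \eqref{eq Kk L1Linf} is a \emph{stationary} phase estimate, not a non-stationary one, and the exponent $n=\tfrac{2n}{2}$ is the signature of that: it is half the number of nonvanishing principal curvatures of the cone $\Sigma_{x,t}=\{\Xi(x,t,y)\}$. The phase $\Psi(y)=\Phi(x,t,y)-\Phi(x',t',y)$ genuinely has critical points in $y$ even when $2^k|t-t'|\gg 1$: writing, say, $\Phi^{\eq}=y_1H_1(x,t,y',\oy)$, a critical point occurs wherever the Kor\'anyi spheres of radii $t$ and $t'$ centred at $x$ and $x'$ are tangent, which happens for suitable $x'$ at any separation $|t-t'|$. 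At such a configuration $\nabla_y\Psi$ vanishes at the tangency point, so no direction $V$ satisfies $|\langle V,\nabla_y\Psi\rangle|\gtrsim|t-t'|$ uniformly over the support of the amplitude; equivalently, the mixed Hessian $\Phi''_{(x,t),y}$ is a $(2n+2)\times(2n+1)$ matrix, so its transpose has a nontrivial kernel, and when $(x-x',t-t')$ lies near that kernel direction the first-order term in your Taylor expansion of $\nabla_y\Psi$ degenerates. Had your $V$ existed, iterating the transport operator would give $(2^k|t-t'|)^{-N}$ for every $N$, which would push the Stein--Tomas interpolation of \S5 strictly below the line $Q_3Q_4$ and contradict the Knapp-type example of \S\ref{sec:Q3Q4}. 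The correct mechanism, and the one the paper uses, is the stationary phase lemma of \cite{MSS93} (Stein's theorem): at a critical point the Hessian of $\Psi$ in the $2n$ directions transverse to the ruling of the cone has eigenvalues of size $\approx|t-t'|$ precisely because $\rank\mathscr{C}=2n$ with a quantitative lower bound on the corresponding minor, and stationary phase in those $2n$ variables yields $(2^k|t-t'|)^{-n}$.

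Your reading of the cinematic curvature condition is also off: \eqref{eq:rank-curv} is a statement about the second fundamental form of the cone $y\mapsto\Xi(x,t,y)$ (that it has the maximal number $2n$ of nonvanishing curvatures), not about transversality of the temporal and spatial components of the canonical relation, so it does not supply the complement in which you want to place $V$. Finally, the appeal to ``as established in \S\ref{sec eq im}'' is circular: that section is where this proposition is proved, and the actual content of the proof is the explicit verification, separately for $\Phi^{\eq}$ and $\Phi^{\im}$, that the normal vector $N$ is quantitatively nondegenerate and that the $2n\times 2n$ block of $\mathscr{C}$ has determinant bounded below uniformly in $t\in[1,2]$ (the computations in \S\ref{subsubsec kernel eq} and \S\ref{subsubsec IM kernel}, including the matrix inversion via Lemma \ref{lem matrix inv} and the optimization showing $|D+\lambda^{-1}B^{\intercal}B|\geq 0.19$). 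None of that work appears in the proposal.
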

The above Proposition shall be proved in \S\ref{subsubsec kernel eq} and \S\ref{subsubsec IM kernel}. Below we show how it implies \eqref{eqn:disc oio q5 np}. 
Using the $TT^*$ technique, it suffices to prove that \begin{equation*}
    \| T^{k} (T^{k})^*\|_{L^{q_5', 1}(\R^{2n+1}\times \cZ_{k})\to L^{q_5, \infty}(\R^{2n+1}\times \cZ_{k})} \lesssim
2^{-k\frac{n(2n+1)}{(n+2)}}.
\end{equation*}
We define $S^{k}$ and $S^k_m$ as in \eqref{eq skl} and \eqref{eq sklm}, with $T^{k,\ell}_t, T^{k,\ell}_{t'}$ replaced with $T^k_t, T^k_{t'}$ respectively. The above estimate then follows from
\begin{equation*}
    \| S^{k}g\|_{L^{q_5, \infty}(\R^{2n+1}\times \cZ_{k})} \lesssim 2^{-k\frac{n(2n+1)}{(n+2)}}\|g\|_{L^{q_5', 1}(\R^{2n+1}\times \cZ_{k})},
\end{equation*}  
which by Bourgain's interpolation trick is a consequence of the estimates
\begin{equation}
     \label{eq sk L2}
     \| S^{k}_m g\|_{L^{2}(\R^{2n+1}\times \cZ_{k})} \lesssim 2^{-k(2n+1)}2^m\|g\|_{L^2(\R^{2n+1}\times \cZ_{k})}
\end{equation}
and
\begin{equation}
\label{eq sk L1-Linf}
\| S^{k}_m g\|_{L^{\infty}(\R^{2n+1}\times \cZ_{k})} \lesssim 2^{-\frac{nm}{2}}\|g\|_{L^1(\R^{2n+1}\times \cZ_{k})}.    
\end{equation}
Estimate \eqref{eq sk L2} follows like in the previous case, using the Cauchy-Schwarz inequality and the fixed time estimate \eqref{fix time Tkest} for $T^k$ with kernel supported around the equator or in an intermediate region.

\eqref{eq sk L1-Linf} is a straightforward consequence of \eqref{eq Kk L1Linf}, for we have
\[\|S^{k,\ell}_m g\|_{L^1(\bbR^{2n+1}\times\cZ_k)\rightarrow L^{\infty}(\bbR^{2n+1}\times\cZ_k)}
\lesssim (2^{k}\times 2^{-k+m})^{-n}=2^{-nm}\leq 2^{-\frac{nm}{2}}.\]
This concludes the proof of \eqref{eqn:disc oio q5 np}.

Thus, in order to prove the positive results in Theorems \ref{thm single avg} and \ref{thm:max full}, we need to establish Propositions \ref{prop:L2cTkl-est} and \ref{prop kernel est np} around the poles (proven in \S\ref{subsec L2 cTkl np} and \S\ref{subsec ker est np} respectively), Proposition \ref{prop kernel est eq} around the equator and in the intermediate region (contained in \S\ref{subsubsec kernel eq} and \S\ref{subsubsec IM kernel} respectively) and Proposition \ref{prop:L2osc-est} in all three cases (proven in \S\ref{subsubsec L2 eq}, \S\ref{subsubsec IM L2} and \S\ref{subsec L2 np}).

\section{Results about Radial Functions and Matrix Inverses}
\label{sec matrix inv}
This section contains a few results about the Hessian of the radial function $g$ and its interaction with a skew symmetric matrix $J$ satisfying $J^2=-I_d$, which will aid us in the calculations to follow.

\begin{lemma}
\label{lem:radprop}
Let $d\in \bbN$, $u:\bbR\to \bbR$ be a smooth function and let $g:\bbR^d\to \bbR$ be given by $g(w)=u(|w|)$. Then   
    
    (i) $\nabla g(w)=\frac{u'(|w|)}{|w|}w$.
    
    (ii) $\det g''(w)=\left(\frac{u'(|w|)}{|w|}\right)^{d-1}u''(|w|)$.

\end{lemma}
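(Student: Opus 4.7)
The plan for part (i) is a direct application of the chain rule. Writing $r=|w|$ and using $\partial_i r = w_i/r$ for $w\neq 0$, differentiating $g(w)=u(r)$ coordinate-wise immediately yields $\partial_i g(w) = u'(r)\,w_i/r$, which is precisely the claimed formula for $\nabla g(w)$.

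For part (ii) I would differentiate once more to obtain an explicit expression for the Hessian. A short computation gives
\[
g''(w) \;=\; \tfrac{u'(r)}{r}\,I_d \;+\; \tfrac{1}{r}\Bigl(\tfrac{u'(r)}{r}\Bigr)'\, w w^\intercal,
\]
so that $g''(w)$ has the form $\alpha(r)I_d + \beta(r)\,ww^\intercal$ for the obvious choice of scalars $\alpha(r),\beta(r)$. The key step is then purely linear-algebraic: any matrix of this shape has $w$ as an eigenvector with eigenvalue $\alpha(r)+\beta(r)r^2$, while the orthogonal complement of $w$ (of dimension $d-1$) consists entirely of eigenvectors with eigenvalue $\alpha(r)$. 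Hence
\[
\det g''(w) \;=\; \alpha(r)^{d-1}\bigl(\alpha(r)+\beta(r)r^2\bigr),
\]
and a brief simplification shows that the two $u'(r)/r$ contributions in $\alpha(r)+\beta(r)r^2$ cancel, leaving exactly $u''(r)$. This yields the stated determinant formula.

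No genuine obstacle is anticipated. The only mild point worth flagging is that the computations above are literally valid for $w\neq 0$; at the origin both sides of the identities should be interpreted via the smooth extension of $u'(r)/r$, which is available under the implicit smoothness hypotheses on $u$, and the formulas then extend by continuity. Since in our application $g$ will be used only away from the origin, this subtlety plays no further role.
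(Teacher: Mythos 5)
Your proof is correct. Part (i) and the computation of the Hessian in the form $g''(w)=\alpha(r)I_d+\beta(r)ww^\intercal$ with $\alpha(r)=u'(r)/r$ and $\beta(r)=u''(r)/r^2-u'(r)/r^3$ coincide exactly with the paper's argument. The only place where you diverge is the evaluation of $\det\bigl(\alpha(r)I_d+\beta(r)ww^\intercal\bigr)$: you diagonalize the rank-one perturbation, noting that $w$ is an eigenvector with eigenvalue $\alpha(r)+\beta(r)r^2=u''(r)$ and that $w^\perp$ gives the eigenvalue $\alpha(r)$ with multiplicity $d-1$, whereas the paper factors out $\alpha(r)$ and proves the identity $\det(I_d+\sigma ww^\intercal)=1+\sigma|w|^2$ by induction on $d$ via Schur complements. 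Your spectral argument is shorter and arguably cleaner (it is the standard matrix-determinant-lemma computation), while the paper's induction is more pedestrian but equally rigorous; both yield the same cancellation leaving $u''(r)$. Your remark about the behaviour at $w=0$ is apt but immaterial here, since in the application $|w|$ is bounded away from zero.
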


\begin{proof}
The first part is a simple application of the chain rule, so we focus on the proof of the second part. Setting $r=|w|$ and taking partial derivatives twice, we have
\[\partial^2_{ij}g(w)=u''(r)\frac{w_iw_j}{r^2}+u'(r)\left(\delta_{ij}\frac{1}{r}-\frac{w_iw_j}{r^3}\right)\]
for $1\leq i,j\leq d$. Thus
\begin{equation}
    \label{eq:Hessform}
    g''(w)=\frac{u'(r)}{r}I_d+\left(\frac{u''(r)}{r^2}-\frac{u'(r)}{r^3}\right)ww^\intercal.
\end{equation}
Here $I_d$ denotes the identity matrix of order $d$.
Taking determinants on both sides, we have
\[\det g''(w)=\left(\frac{u'(r)}{r}\right)^d \det\left[I_d+\left(\frac{u''(r)}{u'(r)r}-\frac{1}{r^2}\right)ww^\intercal\right].\]
We claim that for any $\sigma\in \bbR$, 
\begin{equation}
    \label{eq:claimdet}
    \det(I_d+\sigma ww^{\intercal})=1+\sigma |w|^2=1+\sigma r^2.
\end{equation}
The claim then implies the desired equality, for then it follows that
\[\det g''(w)=\left(\frac{u'(r)}{r}\right)^d\left[1+\left(\frac{u''(r)}{u'(r)r}-\frac{1}{r^2}\right)r^2\right]=\left(\frac{u'(|w|)}{|w|}\right)^{d-1}u''(|w|).\]

We now prove \eqref{eq:claimdet} by induction on the dimension $d$. It can be easily verified for $d=1,2$. Suppose now that the claim is true for $d-1$ with $d\geq 3$. Expressing $w\in \bbR^d$ as $w=(w',w_d)$ with $w'\in \bbR^{d-1}$, we set $r'=|w'|$. Using Schur's complement, we have
\begin{align*}
    \det (I_d+\sigma ww^{\intercal})
    &=\det \begin{pmatrix}
    1+\sigma w_1^2 & \sigma w_1w_2 &\ldots & \sigma w_1w_d\\
    w_1w_2 & 1+\sigma w_2^2 & \ldots & \sigma w_2w_d\\
    \vdots &\vdots &\ddots &\vdots\\
    \sigma w_1w_d &\ldots & \ldots & 1+\sigma w_d^2
    \end{pmatrix}\\
    &=(1+\sigma w_d^2)\det \left[ I_{d-1}+\sigma w'w'^{\intercal}-\frac{\sigma^2w_d^2}{(1+\sigma w_d^2)}  w'w'^{\intercal}\right]\\
    &=(1+\sigma w_d^2)\det \left[ I_{d-1}+\frac{\sigma}{(1+\sigma w_d^2)}w'w'^{\intercal}\right].
\end{align*}
Using the induction hypothesis, we conclude that
\[\det (I_d+\sigma ww^{\intercal})=(1+\sigma w_d^2)\left(1+\frac{\sigma}{(1+\sigma w_d^2)}|w'|^2\right)=1+\sigma(|w'|^2+|w_d|^2)=1+\sigma|w|^2.\]
\end{proof}

Next we have a formula for the inverses of matrices of the form $J+g''(w)$ where $J$ is a skew-symmetric matrix of dimension $d$ with $J^2=-I_{d}$ and $w\in \bbR^{d}$. 
\begin{lemma}
\label{lem matrix inv}
Let $J$ be a skew-symmetric matrix with $J^2=-I_{d}$ and let $w\in\bbR^{d}$. For any $\sigma,\lambda, \kappa, \gamma\in \bbR$, we have
\begin{multline}
    \label{eq:invform}
    (\sigma I_{2n}+\lambda ww^{\intercal}+\kappa J+\gamma w(Jw)^{\intercal})^{-1}\\=\frac{1}{\sigma^2+\kappa^2}\bigg[\sigma I_{2n}-\kappa J+\frac{\gamma\kappa-\sigma\lambda}{\sigma^2+\kappa^2+(\sigma\lambda-\gamma\kappa)|w|^2}\left(\sigma ww^{\intercal}-\kappa(Jw)w^{\intercal}\right)\\
    +\frac{\gamma\sigma+\lambda\kappa}{\sigma^2+\kappa^2+(\sigma\lambda-\gamma\kappa)|w|^2}\left(-\sigma w(Jw)^{\intercal}+\kappa(Jw)(Jw)^{\intercal}\right)\bigg].
\end{multline}
\end{lemma}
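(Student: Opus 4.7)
The plan is to recognize that the matrix whose inverse is sought splits naturally as a sum of two pieces: a simple invertible piece $M:=\sigma I_d + \kappa J$, plus a genuine rank-one perturbation. Once this decomposition is in place, the Sherman--Morrison formula yields the inverse in closed form, and the remainder of the argument is algebraic simplification using the identities $J^\intercal=-J$, $J^2=-I_d$ and $w^\intercal Jw=0$.

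First I would note the key observation
\[
\lambda\, ww^\intercal + \gamma\, w(Jw)^\intercal \;=\; w\bigl(\lambda w + \gamma Jw\bigr)^\intercal \;=\; UV^\intercal,
\]
with $U:=w$ and $V:=\lambda w+\gamma Jw$. Next, since $(\sigma I_d+\kappa J)(\sigma I_d-\kappa J)=\sigma^2 I_d-\kappa^2 J^2=(\sigma^2+\kappa^2)I_d$, the matrix $M=\sigma I_d+\kappa J$ is invertible with
\[
M^{-1}=\tfrac{1}{\sigma^2+\kappa^2}\bigl(\sigma I_d-\kappa J\bigr).
\]
By the Sherman--Morrison formula,
\[
(M+UV^\intercal)^{-1}=M^{-1}-\frac{M^{-1}U V^\intercal M^{-1}}{1+V^\intercal M^{-1}U},
\]
provided the scalar $1+V^\intercal M^{-1}U$ is nonzero; this scalar is precisely $(\sigma^2+\kappa^2+(\sigma\lambda-\gamma\kappa)|w|^2)/(\sigma^2+\kappa^2)$, as we shall see.

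The intermediate computations are then all direct. Using $J^\intercal=-J$ we get $V^\intercal=\lambda w^\intercal-\gamma w^\intercal J$, and multiplying out with $M^{-1}$ on both sides yields
\[
M^{-1}U=\tfrac{1}{\sigma^2+\kappa^2}(\sigma w-\kappa Jw),\qquad
V^\intercal M^{-1}=\tfrac{1}{\sigma^2+\kappa^2}\bigl((\sigma\lambda-\gamma\kappa)w^\intercal-(\lambda\kappa+\gamma\sigma)w^\intercal J\bigr),
\]
where $J^2=-I_d$ has been used to collapse two terms. Since $J$ is skew-symmetric, $w^\intercal Jw=0$, hence $V^\intercal M^{-1}U=(\sigma\lambda-\gamma\kappa)|w|^2/(\sigma^2+\kappa^2)$, which gives the claimed denominator. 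The numerator $M^{-1}U V^\intercal M^{-1}$ expands into four terms, which we convert using the elementary identities
\[
ww^\intercal J=-w(Jw)^\intercal,\qquad Jw\,w^\intercal=(Jw)w^\intercal,\qquad Jw\,w^\intercal J=-(Jw)(Jw)^\intercal.
\]
Factoring, one obtains exactly the two blocks $\sigma ww^\intercal-\kappa(Jw)w^\intercal$ and $-\sigma w(Jw)^\intercal+\kappa(Jw)(Jw)^\intercal$ appearing in the statement, with coefficients $(\gamma\kappa-\sigma\lambda)$ and $(\gamma\sigma+\lambda\kappa)$ respectively (the signs come out right after pulling the minus sign from the Sherman--Morrison correction).

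The only real obstacle is bookkeeping of signs arising from the interplay between $J^\intercal=-J$ and $J^2=-I_d$; there is no analytic difficulty. As a sanity check I would verify the formula in the trivial special case $\lambda=\gamma=0$, where it reduces to $M^{-1}=(\sigma^2+\kappa^2)^{-1}(\sigma I_d-\kappa J)$, and would also check that when $\kappa=0$ one recovers a formula consistent with the $d=2n$ analog of the matrix determinant/inverse computations used in Lemma \ref{lem:radprop} (specifically the identity $\det(I_d+\sigma ww^\intercal)=1+\sigma|w|^2$), since in that case the denominator $\sigma^2+(\sigma\lambda-\gamma\kappa)|w|^2=\sigma(\sigma+\lambda|w|^2)$ matches the corresponding Schur-complement factor.
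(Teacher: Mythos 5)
Your proof is correct. It takes a genuinely different (and more illuminating) route than the paper's: the paper simply asserts that the identity "can be verified directly" by multiplying the claimed inverse against $\sigma I_{d}+\lambda ww^{\intercal}+\kappa J+\gamma w(Jw)^{\intercal}$ and simplifying with $J^2=-I_d$, $ww^\intercal ww^\intercal=|w|^2ww^\intercal$ and $w^\intercal Jw=0$, whereas you \emph{derive} the formula by writing the matrix as $(\sigma I_d+\kappa J)+w(\lambda w+\gamma Jw)^\intercal$ and applying Sherman--Morrison. I checked your intermediate computations ($M^{-1}U$, $V^\intercal M^{-1}$, the scalar $V^\intercal M^{-1}U=(\sigma\lambda-\gamma\kappa)|w|^2/(\sigma^2+\kappa^2)$, and the expansion of $M^{-1}UV^\intercal M^{-1}$ into the four rank-one blocks) and they all come out right, reproducing \eqref{eq:invform} exactly. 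What your approach buys: it explains the otherwise mysterious structure of the right-hand side (the two correction blocks are precisely $M^{-1}U$ tensored with the two pieces of $V^\intercal M^{-1}$), and it makes explicit the nondegeneracy conditions $\sigma^2+\kappa^2\neq 0$ and $\sigma^2+\kappa^2+(\sigma\lambda-\gamma\kappa)|w|^2\neq 0$ that the lemma's "for any $\sigma,\lambda,\kappa,\gamma$" phrasing leaves implicit (and which are satisfied wherever the lemma is applied, cf.\ Corollary \ref{cor matrix inv est}). The paper's verification is shorter to state but offers no such insight; the trade-off is purely expository.
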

\begin{proof}
This can be verified directly, using the facts that $J^2=-I_{d}$, $ww^\intercal w w^\intercal=|w|^2ww^\intercal$ and $w^\intercal J w=0$ (due to $J$ being skew-symmetric).
\end{proof}
The following corollaries shall be useful in later calculations.
\begin{corollary}
\label{cor matrix inv est}
Suppose there exist positive constants $c, C$ such that $\sigma^2+\kappa^2>c$ and $\max\{|\alpha|, |\lambda|, |\kappa|, |\gamma|, |w|\}<C$. Then it is clear from \eqref{eq:invform} that 
\begin{equation*}
\label{eq inv matrix est}
    \det\left[(\sigma I_{2n}+\lambda ww^{\intercal}+\kappa J+\gamma w(Jw)^{\intercal})^{-1}\right]\lesssim_{c, C, d} 1.
\end{equation*}
\end{corollary}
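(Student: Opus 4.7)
The plan is to apply the explicit formula of Lemma \ref{lem matrix inv} and then estimate each term entrywise, deducing the determinant bound from a uniform bound on the entries of the inverse matrix.

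First, observe that since $J$ is orthogonal (as $J^\intercal J = -J^2 = I_d$), the hypothesis $|w|<C$ yields $|Jw|=|w|<C$, so that the entries of the rank-one tensors $ww^\intercal$, $(Jw)w^\intercal$, $w(Jw)^\intercal$ and $(Jw)(Jw)^\intercal$ are all bounded by $C^2$. The entries of $I_{2n}$ and $J$ are $O(1)$. Combined with the scalar bounds $|\sigma|,|\kappa|,|\lambda|,|\gamma|<C$, every numerator appearing on the right-hand side of \eqref{eq:invform} is bounded by a constant depending only on $C$ and $d$.

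Next, I would control the two denominators appearing in the formula. The outer prefactor $(\sigma^2+\kappa^2)^{-1}$ is at most $c^{-1}$ by the hypothesis $\sigma^2+\kappa^2>c$. For the secondary denominator $\sigma^2+\kappa^2+(\sigma\lambda-\gamma\kappa)|w|^2$, I would note that $|(\sigma\lambda-\gamma\kappa)|w|^2|\le 2C^4$, so that upon shrinking $C$ (or enlarging the implicit constant through its dependence on $c$ and $C$) to the regime $2C^4\le c/2$, this denominator is bounded below by $c/2$. This is the only real care point in the argument; in the applications where Corollary \ref{cor matrix inv est} will be used, the relevant vector $w$ corresponds to a rescaled small quantity, so this smallness of $|w|$ will in fact be automatic and the denominator will be comparable to $\sigma^2+\kappa^2$.

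Combining these observations, every entry of the inverse matrix is bounded in absolute value by some constant $M=M(c,C,d)$. The determinant bound then follows from a crude general fact: any $d\times d$ matrix with entries of modulus at most $M$ has determinant of modulus at most $d!\,M^d$ (for instance, via the Leibniz expansion, or Hadamard's inequality which gives the sharper $d^{d/2}M^d$). This yields
\[
\det\bigl[(\sigma I_{2n}+\lambda ww^{\intercal}+\kappa J+\gamma w(Jw)^{\intercal})^{-1}\bigr]\lesssim_{c,C,d}1,
\]
as required. The main technical care, as noted, is to ensure that the secondary denominator does not come close to zero; once this is handled, the rest is a routine entrywise estimate.
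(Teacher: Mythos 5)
Your approach is the one the paper implicitly intends (it offers no proof beyond ``clear from \eqref{eq:invform}''): read off entrywise bounds from the explicit inverse and convert them into a determinant bound. The argument is fine except at exactly the point you flag, and that point deserves a sharper look, because it is a defect of the corollary's hypotheses rather than of your method. Writing $A=\sigma I_{2n}+\kappa J+w(\lambda w+\gamma Jw)^\intercal$, the matrix determinant lemma together with $w^\intercal Jw=0$, $|Jw|=|w|$ and $\det(\sigma I_{2n}+\kappa J)=(\sigma^2+\kappa^2)^n$ gives
\[
\det A=(\sigma^2+\kappa^2)^{n-1}\bigl(\sigma^2+\kappa^2+(\sigma\lambda-\gamma\kappa)|w|^2\bigr),
\]
so $\det(A^{-1})$ is bounded above precisely when the secondary denominator is bounded below; with $\sigma=1$, $\kappa=\gamma=0$, $\lambda=-1$, $|w|=1$ the matrix $A$ is singular, so the statement is genuinely false without an extra hypothesis. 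Your specific patch, however --- shrinking to the regime $2C^4\le c/2$ on the grounds that ``in the applications $w$ is a rescaled small quantity'' --- does not cover all of the paper's uses. In \S\ref{subsubsec IM L2} the corollary is applied to $X$ with $w=\uy$, $\lambda=4$ and $|\uy|$ as large as roughly $t\in[1,2]$, so no smallness is available there; that application is instead saved by the sign condition $\sigma\lambda-\gamma\kappa=\tfrac{15}{2}|\uy|^2\ge 0$, which forces the secondary denominator to be at least $\sigma^2+\kappa^2>c$. (In the north-pole applications of \S\ref{subsec L2 cTkl np} it is the smallness of $\sigma\lambda$, of order $2^{-4\ell}$, not of $|w|$, that does the job.) The clean statement to prove and invoke is therefore: under the stated bounds together with the additional hypothesis $\sigma^2+\kappa^2+(\sigma\lambda-\gamma\kappa)|w|^2>c$, the conclusion holds --- and that version follows either from your entrywise estimate or, more directly, from the displayed determinant identity.
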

\begin{corollary}
\label{cor inn prod}
Observe that $(Jw)^{\intercal}w=0$ as $J$ is skew-symmetric. Thus, using \eqref{eq:invform}, we conclude that
\begin{align*}
&(\sigma I_{2n}+\lambda ww^{\intercal}+\kappa J+\gamma w(Jw)^{\intercal})^{-1}w\\
&=\frac{1}{\sigma^2+\kappa^2}\bigg[(\sigma I_{2n}-\kappa J)w+\frac{\gamma\kappa-\sigma\lambda}{\sigma^2+(\sigma\lambda-\gamma\kappa)|w|^2+\kappa^2} (\sigma ww^{\intercal}- \kappa(Jw)w^{\intercal})w \bigg]\\
&=\frac{1}{\sigma^2+(\sigma\lambda-\gamma\kappa)|w|^2+\kappa^2}(\sigma I_{2n}-\kappa J)w.
\end{align*}
\end{corollary}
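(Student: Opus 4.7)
The plan is to obtain Corollary \ref{cor inn prod} as a direct application of the inverse formula \eqref{eq:invform} in Lemma \ref{lem matrix inv} to the vector $w$, exploiting the two algebraic identities $(Jw)^\intercal w = 0$ (from skew-symmetry of $J$) and $w^\intercal w = |w|^2$. Specifically, I would evaluate each of the four rank-one pieces $ww^\intercal$, $(Jw)w^\intercal$, $w(Jw)^\intercal$, $(Jw)(Jw)^\intercal$ acting on $w$: the first two produce $|w|^2 w$ and $|w|^2 Jw$ respectively, while the last two annihilate $w$ because the right factor contributes $(Jw)^\intercal w = 0$. The $\gamma$ terms in the inverse formula therefore drop out entirely, leaving only contributions proportional to $(\sigma I_{2n} - \kappa J)w$.

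Concretely, plugging $w$ into \eqref{eq:invform}, I would first isolate the leading piece $\tfrac{1}{\sigma^2+\kappa^2}(\sigma I_{2n} - \kappa J)w$ and the surviving correction
\[
\frac{1}{\sigma^2+\kappa^2}\cdot\frac{\gamma\kappa-\sigma\lambda}{\sigma^2+\kappa^2+(\sigma\lambda-\gamma\kappa)|w|^2}\,|w|^2\,(\sigma I_{2n} - \kappa J)w,
\]
which comes from combining $\sigma ww^\intercal w = \sigma|w|^2 w$ and $-\kappa(Jw)w^\intercal w = -\kappa|w|^2 Jw$. Factoring $(\sigma I_{2n}-\kappa J)w$ out, the scalar prefactor simplifies as
\[
\frac{1}{\sigma^2+\kappa^2}\left(1+\frac{(\gamma\kappa-\sigma\lambda)|w|^2}{\sigma^2+\kappa^2+(\sigma\lambda-\gamma\kappa)|w|^2}\right)=\frac{1}{\sigma^2+\kappa^2+(\sigma\lambda-\gamma\kappa)|w|^2},
\]
since the numerator of the bracketed sum collapses to $\sigma^2+\kappa^2$. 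This yields precisely the asserted identity.

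There is no genuine obstacle here; this is a bookkeeping computation that relies only on (a) already having \eqref{eq:invform} proved and (b) the skew-symmetry relation $w^\intercal J w = 0$. The only subtlety worth flagging is keeping careful track of which of the four rank-one terms survive, and confirming that the denominators combine cleanly. Once that cancellation is observed, the corollary follows in one line, and this is the form that will be used downstream to evaluate the relevant expressions appearing in the rotational curvature computations of \S\ref{sec np}.
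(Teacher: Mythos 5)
Your proposal is correct and follows exactly the paper's own computation: apply \eqref{eq:invform} to $w$, kill the $w(Jw)^\intercal$ and $(Jw)(Jw)^\intercal$ pieces via $(Jw)^\intercal w=0$, use $ww^\intercal w=|w|^2w$ and $(Jw)w^\intercal w=|w|^2Jw$ to factor out $(\sigma I_{2n}-\kappa J)w$, and collapse the scalar prefactor to $\bigl(\sigma^2+\kappa^2+(\sigma\lambda-\gamma\kappa)|w|^2\bigr)^{-1}$. The only minor imprecision is the phrase that ``the $\gamma$ terms drop out entirely''---$\gamma$ survives in the coefficient $\gamma\kappa-\sigma\lambda$ of the remaining correction, as your own displayed formula correctly records.
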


\section{Proof of Propositions \ref{prop:L2cTkl-est} and \ref{prop kernel est np}: Estimates at the Poles}
\label{sec np}
We begin by proving Proposition \ref{prop:L2cTkl-est} about the fixed time $L^2$ bound on operators $\cT^{k,\ell}_t$ for $400n\leq \ell\leq \frac{k}{4}$.

\subsection{Proof of Proposition \ref{prop:L2cTkl-est}}
\label{subsec L2 cTkl np}
Recall that for $0\leq \ell\leq \frac{k}{4}$, the operator $\cT^{k,\ell}_t$ is defined as follows
\[
    \cT_t^{k,\ell} f(\ux,\ox) = \int_{\R^{2n+1}} e^{i 2^{k-4\ell} \Phi_{\ell}(x,t,y)} a_{\ell}(x,t,y) f(\uy,\oy) dy.
\]
Here
\begin{equation}
    \label{eq phase np Taylor}
    \Phi_{\ell}(x,t,\uy,\oy)=\overline{y}\left(\overline{x}+ \frac{1}{2}\underline{x}^T J \underline{y}+t^2g_{\ell}(\tfrac{2^{-2\ell}\underline{x}-\underline{y}}{t}) \right)
\end{equation}
and for a fixed $t\in[1,2]$, $a_{\ell}(x,t,y)=b(2^{-3\ell}\ux,2^{-4\ell}\ox,t,2^{-\ell}\uy,\oy)\rho_{1}(2^{-2\ell}\ux-\uy)$ is supported in the set where $|\ux|\lesssim 2^{3\ell}, |\ox|\lesssim 2^{4\ell}$, $\frac{1}{8}\leq|2^{-2\ell}\ux-\uy|\leq \frac{1}{2}$ (resp. $|2^{-2\ell}\ux-\uy|\leq \frac{1}{4}$) for $\ell<\frac{k}{4}$ (resp. $\ell=\frac{k}{4}$), and $|\oy|\sim 1$.

We seek to apply H\"ormander's classical $L^2$ bound
 (\cite[ch. IX.1]{Stein-harmonic}). For a \textit{fixed} phase function $\Phi$ supported in $\bbR^{2n+1}\times\bbR^{2n+1}$, this bound guarantees the fixed time estimate in Proposition \ref{prop:L2cTkl-est}, provided that the rank of the $(2n+1)\times (2n+1)$ mixed Hessian matrix $\Phi_{(x,y)}''$ is equal to $2n+1$. The implicit constant in the inequality depends on the lower bound for the determinant of $\Phi_{(x,y)}''$, the upper bound on finitely many derivatives of $\Phi$ and the amplitude $a$, and the size of the support of $a$.
 
In our case, we require uniform $L^2$ bounds for a family of operators with phase functions $\Phi_{\ell}$ and amplitudes $a_{\ell}$ for $400n\leq \ell\leq \frac{k}{4}$. From the definition of $a_\ell$, it is clear that for large $\ell$, the derivatives of $a_\ell$ upto a finite order are bounded from above by an absolute constant times the derivatives of $a$. Coming to the phase functions, recall that $g_\ell$ (in the definition of $\Phi_\ell$ above) is given by
\begin{equation*}
g_{\ell}(\uw)=g(\uw)+2^{-4\ell}\tilde{g}_\ell(\uw),
\end{equation*} with $\Tilde{g}_\ell$ satisfying property \eqref{eq ggerror}, namely
\begin{equation*}
     \left|\frac{\partial^\beta}{\partial \uw^{\beta}}\Tilde{g}_\ell(\uw)\right|\leq 100n\cdot 2^{-4\ell}|\uw|^{8-|\beta|}\,\, \text{ for } |\beta|\in\{0,1,2,3\} \text{ and } |\uw|\leq\frac{1}{2}.
\end{equation*}
The above estimate is also true for higher order derivatives, perhaps with an absolute constant greater than $100n$. Thus for large enough $\ell$ (say $\ell \geq 400n$),  $g_{\ell}$ and its higher order derivatives essentially behave like $g$ and consequently $\Phi_{\ell}$ is a smooth perturbation of 
$\Phi(x,t,\uy,\oy)=\overline{y}\left(\overline{x}+ \underline{x}^T J \underline{y}+t^2g(\tfrac{-\underline{y}}{t}) \right)$.

However, the size of the $x$ support of the amplitude $a_\ell$ is no longer uniform in $\ell$, since $a_\ell$ is supported in the set where $|\ux|\lesssim 2^{3\ell}$ and $|\ox|\lesssim 2^{4\ell}$. Hence we need to introduce spatial localization in the $x$ variable. To this effect, let $\Tilde{\rho}$ be a smooth, non-negative, compactly supported function in $\bbR^{2n+1}$ such that $\sum_{\mu\in\bbZ^{2n+1}}\Tilde{\rho}(x-\mu)=1$ for all $x\in\bbR^{2n+1}$. Define
\[\cT^{k,\ell}_{t,\mu}f(x):=\Tilde{\rho}(x-\mu)\int_{\R^{2n+1}} e^{i 2^{k-4\ell} \Phi_{\ell}(x,t,y)} a_{\ell}(x,t,y) f(\uy,\oy) dy.\]
Then we can write \[\cT^{k,\ell}_{t}f(x)=\sum_{\mu\in\bbZ^{2n+1}}\cT^{k,\ell}_{t,\mu}f(x).\]
The kernel of each $\cT^{k,\ell}_{t,\mu}$ is now supported in a set of constant size.
Since H\"ormander's estimate is stable under smooth perturbations, it can be applied to show that $\|\cT^{k,\ell}_{t,\mu}\|_{L^2(\bbR^{2n+1}}\lesssim 2^{-(k-4\ell)\frac{2n+1}{2}}$ uniformly in $\mu$ and $t$, provided we establish that
\begin{equation}
    \label{eq mixed Hessian np}
    \rank (\Phi_\ell)_{(x,y)}''= 2n+1
\end{equation}
with $\det (\Phi_\ell)_{(x,y)}''$ bounded from below uniformly in $\ell, t$ and $\mu$. Let 
\Be \label{eq:Xildef}
 \Xi^\ell(x,t,y):=\nabla_{x,t} \Phi_\ell (x,t,y).
 \Ee  
We calculate
 \[\Xi^\ell(x,t,y)=
 \overline{y}\begin{pmatrix}
 \frac{1}{2}J\uy+2^{-2\ell}t\nabla g_{\ell} (\tfrac{2^{-2\ell}\ux-\uy}{t})\\1\\  2tg(\tfrac{2^{-2\ell}\ux-\uy}{t})-\nabla g(\tfrac{2^{-2\ell}\ux-\uy}{t})\cdot(2^{-2\ell}\ux-\uy)+2^{-4\ell}\Upsilon_{\ell}(\tfrac{2^{-2\ell}\ux-\uy}{t})
 \end{pmatrix},
 \] where $\Upsilon_{\ell}(\tfrac{2^{-2\ell}\ux-\uy}{t}):=\frac{\partial}{\partial t}\left(t^2\Tilde{g}_\ell(\tfrac{2^{-2\ell}\ux-\uy}{t})\right)$ and satisfies \begin{equation}
    \label{eq Upsilon error}
     \left|\frac{\partial^\beta}{\partial \uw^{\beta}}\Upsilon_\ell(\uw)\right|\leq 4\cdot100n\cdot 2^{-4\ell}|\uw|^{8-|\beta|}\,\, \text{ for } |\beta|\in\{0,1,2,3\} \text{ and } |\uw|\leq\frac{1}{2}.
\end{equation} The partial derivative with respect to $t$ is not needed at the moment but will be helpful for calculations in the next subsection.
 Note that since $g$ is a homogeneous polynomial of degree $4$, we have 
 \[\nabla g(\tfrac{2^{-2\ell}\ux-\uy}{t})\cdot(2^{-2\ell}x-y)=4tg(\tfrac{2^{-2\ell}\ux-\uy}{t}).\]
Using the above in the last co-ordinate, and subscripts $1,2, \ldots, 2n,  2n+1$ to denote partial derivatives in $y_1, y_2, \ldots, y_{2n}$ and $\oy$ respectively, we have
\begin{align*}
 \Xi_{j}^{\ell}&=\oy\begin{pmatrix}
 \frac{1}{2}Je_j-2^{-2\ell}\partial_j\nabla g_{\ell}(\ttfl)\\0\\
 -2\partial_jg(\tfrac{2^{-2\ell}\ux-\uy}{t})+2^{-4\ell}t^{-1}\partial_j\Upsilon_\ell(\tfrac{2^{-2\ell}\ux-\uy}{t})
 \end{pmatrix},\\
 \Xi_{2n+1}^{\ell}&=\begin{pmatrix}
 \frac{1}{2}J\uy+2^{-2\ell}t\nabla g_{\ell} (\tfrac{2^{-2\ell}\ux-\uy}{t})\\1\\  -2tg(\tfrac{2^{-2\ell}\ux-\uy}{t})+2^{-4\ell}\Upsilon_{\ell}(\tfrac{2^{-2\ell}\ux-\uy}{t})
 \end{pmatrix}.
\end{align*}

Thus, for a fixed $t\in [1,2]$, the mixed Hessian $(\Phi_\ell)''_{(x,y)}$ is of the form
\begin{align*}
    \begin{pmatrix}
    \frac{J}{2}-2^{-2\ell}g''_\ell\left(\tfrac{2^{-2\ell}\ux-\uy}{t}\right)&*\\
    0&1
    \end{pmatrix}.
\end{align*}
Using \eqref{eq:Hessform}, we get
\[\frac{J}{2}-2^{-2\ell}g''\left(\tfrac{2^{-2\ell}\ux-\uy}{t}\right)=\frac{J}{2}-2^{-2\ell}\frac{u_\ell'(r)}{r}I_{2n}-2^{-2\ell}\left(\frac{u_\ell''(r)}{r^2}-\frac{u_\ell'(r)}{r^3}\right)\uw\uw^\intercal,\]
where $\uw= \tfrac{2^{-2\ell}\ux-\uy}{t}, r=|\uw|$ and $u_\ell$ satisfies \eqref{eq homerror}. We set $\sigma=-2^{-2\ell}\frac{u_\ell'(r)}{r}, \lambda=2^{-2\ell}\left(\frac{u_\ell''(r)}{r^2}-\frac{u_\ell'(r)}{r^3}\right), \kappa=\frac{1}{2}$, $\gamma=0$ and note that for $\ell\geq 400n$, $\max\{\sigma, \lambda, \kappa, \gamma, |\uw|\}\leq 1$, using \eqref{eq homerror} and the fact that $|\uw|\leq \frac{1}{2}$. Further $\sigma^2+\kappa^2\geq \tfrac{1}{4}$. Thus Corollary \ref{cor matrix inv est} implies that \[\det\left[\frac{J}{2}-2^{-2\ell}g''\left(\tfrac{2^{-2\ell}\ux-\uy}{t}\right)\right]\gtrsim_n 1.\]

\subsubsection{Almost orthogonality} 
\label{subsubsec almost L2}
We have thus verified \eqref{eq mixed Hessian np} and can conclude using H\"ormander's $L^2$ estimate that 
\begin{equation}
    \label{eq L2est cot-st piece}
    \|\cT^{k,\ell}_{t,\mu}\|_{L^2(\bbR^{2n+1})}\lesssim 2^{-(k-4\ell)\frac{2n+1}{2}}
\end{equation}
for each $\mu\in \bbZ^{2n+1}$. To avoid cumbersome notation, we suppress the dependence on $k, \ell$ and $t$ and denote $\cT^{k,\ell}_{t,\mu}$ by $\cT_\mu$. In order to obtain the required $L^2$ estimate for $\cT^{k,\ell}_{t}=\sum_{\mu\in\bbZ^{2n+1}}\cT_{\mu}$, by the Cotlar-Stein lemma, it suffices to show that
\begin{equation}
    \label{eq Cotlar St}
    \sup_{\mu,\nu\in\bbZ^{2n+1}}\{\|\cT_\mu \cT_\nu^*\|_{L^2}, \|\cT_\mu^*\cT_\nu\|_{L^2}\}\leq c_N 2^{-(k-4\ell)\frac{2n+1}{2}}(1+|\mu-\nu|)^{-N}
\end{equation}
for large enough $N$. Because of \eqref{eq L2est cot-st piece}, it is enough to consider the cases when $\mu\neq \nu$. The kernel of $\cT_\mu^*\cT_\nu$ is of the form
\[\int_{\R^{2n+1}} e^{i 2^{k-4\ell} [\Phi_{\ell}(x,t,y)-\Phi_{\ell}(x,t,\Breve{y})]} a_{\ell}(x,t,y)\overline{a_{\ell}(x,t,\Breve{y})}\Tilde{\rho}(x-\mu)\Tilde{\rho}(x-\nu) dx\]
and is non-zero only for $|\mu-\nu|\leq M$ for some constant $M$ depending on $\rho$. In view of \eqref{eq L2est cot-st piece}, this implies the desired estimate for $\|\cT_\mu \cT_\nu^*\|_{L^2}$ for a large enough constant $c_N$.

We now consider $\cT_\mu\cT_\nu^*$ whose kernel is given by
\[\cK_{\mu\nu^*}(x,\Breve{x}):=\Tilde{\rho}(x-\mu)\Tilde{\rho}(\Breve{x}-\nu) \int_{\R^{2n+1}} e^{i 2^{k-4\ell} [\Phi_{\ell}(x,t,y)-\Phi_{\ell}(\Breve{x},t)]} a_{\ell}(x,t,y)\overline{a_{\ell}(\Breve{x},t, y)}dy\]
which is supported in the set where $\max\left(|\ox|,|\Bar{\Breve{x}}|\right) \lesssim 2^{4\ell}$, $\max\left(|2^{-2\ell}\ux-\uy|,|2^{-2\ell}\underline{\Breve{x}}-\uy| \right)\leq \frac{1}{2}$, hence $|\ox-\Bar{\Breve{x}}|\lesssim 2^{4\ell}$ and $|\ux-\underline{\Breve{x}}|\leq 2^{2\ell-1}$. From the specific form of the phase function $\Phi_\ell$ \eqref{eq phase np Taylor}, we infer that
\[(\Phi_\ell)_{\ux\ux}''(x,t,y)=2^{-4\ell}\oy g_\ell''(\tfrac{2^{-2\ell}\ux-\uy}{t}),\]
while any second order derivative involving $\ox$ is zero. Using Taylor expansion of $\Phi_\ell$ around $x$ and differentiating with respect to $y$, we have
\begin{equation}
\label{eq cot Taylor np phase}
\nabla_y[\Phi_{\ell}(x,t,y)-\Phi_{\ell}(\Breve{x},t, y)]=(\Phi_{\ell})''_{x,y}(x,y,t)\cdot(x-\Breve{x})+ 2^{-4\ell}\Tilde{R}_\ell(\ux,\underline{\Breve{x}},t,y),
\end{equation}
with $\Tilde{R}_\ell$ satisfying the condition
\[|\Tilde{R}_\ell(\ux,\underline{\Breve{x}},t,y)|\lesssim |\ux-\underline{\Breve{x}}|^2\]
on account of \eqref{eq ggerror}.
Since $|\ux-\underline{\Breve{x}}|\lesssim 2^{3\ell}$ and $\Phi_\ell''(x,y,t)$ is invertible, for large enough $\ell$ we can conclude that
\[|\nabla_y[\Phi_{\ell}(x,t,y)-\Phi_{\ell}(\Breve{x},t, y)]|\geq |x-\Breve{x}|.\]
Integrating by parts $N$ times yields
\[|\cK_{\mu\nu^*}(x,\Breve{x})|\lesssim_N\Tilde{\rho}(x-\mu)\Tilde{\rho}(\Breve{x}-\nu)(1+2^{k-4\ell}|x-\Breve{x}|)^{-N}.\]
Thus
\[\int_{\bbR^{2n+1}}|\cK_{\mu\nu^*}(x,\Breve{x})|dx\lesssim_{N}2^{-(k-4\ell)N}\int_{|x-\Breve{x}|\sim|\mu-\nu|}\rho(x-\mu)|x-\Breve{x}|^{-N}dx\lesssim_N 2^{-(k-4\ell)N}(1+|\mu-\nu|)^{-N+2n}.\]
By symmetry we have the same estimate for $\int_{\bbR^{2n+1}}|\cK_{\mu\nu^*}(x,\Breve{x})|d\Breve{x}$ and thus it follows by the Schur test that
\[\|\cT_\mu^*\cT_\nu\|_{L^2}\lesssim_N 2^{-(k-4\ell)\frac{2n+1}{2}}(1+|\mu-\nu|)^{-N+2n}.\] The above estimate implies \eqref{eq Cotlar St} with $N$ replaced by $N-2n$, which is still good enough for large enough $N$ (say $N>4n+2$). An application of the Cotlar-Stein lemma concludes the proof of Proposition \ref{prop:L2cTkl-est}.

\subsection{Proof of Proposition \ref{prop kernel est np}}
\label{subsec ker est np}
Let $\Xi^\ell=\nabla_{x,t}\Phi_\ell$ as in \eqref{eq:Xildef},  $N\in\R^{2n+2}$ be a unit  vector, and let $\mathscr C^N(x,t,y)$ be the $(2n+1)\times (2n+1)$ cinematic curvature matrix with respect to $N$ given by
\Be\label{curvmatrix}  \mathscr C_{ij}^\ell= \frac{\partial^2}{\partial y_i\partial y_j} \inn{ N}{\Xi^\ell}  \Ee
We would like to argue as in the proof of \cite[Proposition 3.4]{MSS93} using a variant of  Stein's theorem  \cite{SteinBeijing} according to which the kernel estimate \eqref{eq cKkl L1Linf} holds provided the rank of the mixed Hessian $(\Phi_\ell)_{(x,t),y}''=2n+1$ and the curvature condition
\Be \label{eq:rank-curv}\inn {N}{ \Xi^\ell_{j}  }=0 ,\,\, j=1,\dots, 2n+1 \quad \implies \quad \rank\, \mathscr C^\ell = 2n\Ee 
is satisfied; i.e. the conic surface  $\Sigma^\ell_{x,t}$ parametrized by $y\mapsto \Xi^\ell(x,t,y)$  has the maximal number $2n$ of nonvanishing principal curvatures. The argument would be a consequence of the method of stationary phase applied to the oscillatory integral operator given by $\cT^{k,\ell}_t(\cT^{k,\ell}_{t'})^*$.

However, the kernel estimate in \cite{MSS93} was proved for a single oscillatory integral operator with a \textit{fixed} phase function $\Phi$, whereas we need to apply it for the family of operators $\cT^{k,\ell}_t$ with $\ell<\frac{k}{4}$. The implicit constant in Stein's theorem depends on the upper bound on finitely many spatial derivatives of the phase function $\Phi_\ell$ and the amplitude $a_\ell$, the size of the $y$ support of $a_\ell$ and the lower bound for the determinant of the invertible $2n\times2n$ minor of $\mathscr C^\ell$. 

We have already observed in the previous subsection that the spatial derivatives of the amplitude $a_\ell$ are bounded from above by a constant times the corresponding derivatives of $a$ for all large $\ell$. Further, arguing similarly as in the last subsection using property \eqref{eq ggerror} of $g_\ell$, it can be seen that $\Phi_\ell$ and $\nabla_{(x,t)}\Phi_\ell$ are smooth perturbations of $\Phi$ and $\nabla_{(x,t)}\Phi$, where 
$\Phi=\overline{y}\left(\overline{x}+ \underline{x}^T J \underline{y}+t^2g(\tfrac{-\underline{y}}{t}) \right)$.
Moreover, the size of the $y$ support of $a_\ell$ is bounded in $\ell$, and while the same is not true for the size of the $x$ support, arguing as in the previous subsection using \eqref{eq cot Taylor np phase}, we can still conclude that
\[|\nabla_y[\Phi_{\ell}(x,t,y)-\Phi_{\ell}(\Breve{x},\Breve{t}, y)]|\gtrsim |x-\Breve{x}|+|t-\Breve{t}|.\]
Since the estimate in \cite{MSS93} is stable under smooth perturbations, it suffices to show that $\rank{\mathscr{C}^\ell}=2n$ and the determinant of its invertible $2n\times 2n$ minor is bounded from below by an absolute constant independent of $\ell$.

To this effect, let $N=(\ua,\oa,\alpha_{2n+2})$ be a unit normal vector satisfying $\langle N, \Xi_j^\ell\rangle=0$ for $1\leq j\leq 2n+1$. The first $2n$ of these equations can be written down in the matrix form 
\[-\left(\frac{J}{2}-2^{-2\ell}g_{\ell}''\left(\frac{2^{-2\ell}\ux-\uy}{t}\right)\right)^{\intercal} \ua=\alpha_{2n+2}\left[-2\nabla g_{\ell}(\frac{2^{-2\ell}\ux-\uy}{t})+2^{-4\ell}\nabla\Upsilon_\ell(\frac{2^{-2\ell}\ux-\uy}{t})\right].\]

Recall that $\uw:=-\frac{2^{-2\ell}\ux-\uy}{t}$ and $r=|\uw|$. Let $\lambda_{\ell}(r):=\frac{u_{\ell}''(r)}{r^2}-\frac{u_{\ell}'(r)}{r^3}$ and $\sigma_{\ell}(r):=\frac{u_{\ell}'(r)}{r}$. Using \eqref{eq:Hessform} for $-\left(\frac{J}{2}-2^{-2\ell}g_{\ell}''\left(\frac{2^{-2\ell}\ux-\uy}{t}\right)\right)^{\intercal} =\left(\frac{J}{2}+2^{-2\ell}g_{\ell}''\left(\frac{2^{-2\ell}\ux-\uy}{t}\right)\right)$, we get
\begin{align*}
    \ua&=-\alpha_{2n+2}\left(\frac{J}{2}+2^{-2\ell}\sigma_{\ell}(r)I_{2n}+2^{-2\ell}\lambda_{\ell}(r)\uw\uw^\intercal\right)^{-1}(2\nabla g_{\ell}(\uw)-2^{-4\ell}\nabla \Upsilon_{\ell}(\uw)).
\end{align*}
Applying Lemma \ref{lem:radprop}, part (i) and Remark \ref{cor inn prod}  to calculate the gradient and matrix inverse on the right hand side yields
\begin{equation}
\label{eq oa}
\ua=\frac{-2\alpha_{2n+2}\sigma_{\ell}}{(\tfrac{1}{4}+2^{-4\ell}r^2\sigma_{\ell}\lambda_{\ell}+2^{-4\ell}\sigma_{\ell}^2)}\left(2^{-2\ell}\sigma_{\ell} I_{2n}-\frac{J}{2}\right)\uw +2^{-4\ell}\alpha_{2n+2}\Tilde{\uw},
\end{equation}
where $\Tilde{\uw}:=-\left(\frac{J}{2}+2^{-2\ell}\sigma_{\ell}(r)I_{2n}+2^{-2\ell}\lambda_{\ell}(r)\uw\uw^\intercal\right)^{-1}2^{-4\ell}\nabla \Upsilon_{\ell}(\uw)$. Using \eqref{eq Upsilon error} and Corollary \ref{cor matrix inv est}, it is easy to see that $|\Tilde{\uw}|\lesssim 2^{-4\ell}$. 

We compute 
\begin{align*}
 \Xi_{ij}^{\ell}&=\oy\begin{pmatrix}
2^{-2\ell}t^{-1}\partial^2_{ij}\nabla g_{\ell}(\uw)\\0\\ 2t^{-1}\partial^2_{ij}g_{\ell}(\uw)-2^{-4\ell}t^{-1} \partial^2_{ij}\Upsilon_{\ell}(\uw)
 \end{pmatrix}\,\, \text{ for } 1\leq i,j\leq 2n,\\
\Xi_{ij}^{\ell}&=\oy^{-1}\Xi_{j}^{\ell}\,\, \text{ for } i=2n+1.
\end{align*}
Using \eqref{eq gldef1} and \eqref{eq gldef2}, we can write
\begin{equation*}
 \Xi_{ij}^{\ell}=\oy\begin{pmatrix}
2^{-2\ell}t^{-1}\partial^2_{ij}\nabla g(\uw)\\0\\ 2t^{-1}\partial^2_{ij}g_{\ell}(\uw)
 \end{pmatrix}+\oy2^{-4\ell}\Tilde{\Xi}_{ij}^{\ell}(\uw)\,\, \text{ for } 1\leq i,j\leq 2n,
\end{equation*}
with $|\Tilde{\Xi}_{ij}^{\ell}(\uw)|\leq 4\cdot100n\cdot 2^{-4\ell}|\uw|^{6} $, using \eqref{eq Upsilon error}. 

We now use equation \eqref{eq oa} to calculate the elements of the cinematic curvature matrix:
\begin{align*}
&\frac{t}{\oy}\langle \Xi_{ij}^{\ell}, N\rangle \\
&=\frac{-2^{-2\ell+1}\alpha_{2n+2}\sigma_{\ell}}{\tfrac{1}{4}+2^{-4\ell}\sigma_{\ell}\lambda_{\ell}|\uw|^2+2^{-4\ell}\sigma_{\ell}^2}\left\langle\left(2^{-2\ell}\sigma_{\ell} I_{2n}-\frac{J}{2}\right)\uw,\partial_{ij}^2\nabla g(\uw) \right\rangle+2\alpha_{2n+2} \partial^2_{ij}g(\uw)+\alpha_{2n+2}2^{-4\ell}a_{ij}(\uw),
\end{align*}
where $|a_{ij}(\uw)|\lesssim 2^{-4\ell}$. Since $\langle\uw,\partial_{ij}^2\nabla g(\uw)\rangle=2\partial^2_{jk}g(\uw)$ and $\frac{1}{4}+2^{-4\ell}\sigma_{\ell}\lambda_{\ell}r^2+2^{-4\ell}\sigma_{\ell}^2=\frac{1}{4}+2^{-4\ell} u_{\ell}''\sigma_\ell$, we have
\begin{align*}
&\frac{t}{\oy}\langle \Xi_{ij}^{\ell}, N\rangle \\
&=\alpha_{2n+2}\left(2-\frac{2^{-4\ell+2}\sigma_{\ell}^2}{\tfrac{1}{4}+2^{-4\ell} u_{\ell}''\sigma_\ell}\right)\partial^2_{ij}g(\uw)+\frac{2^{-2\ell}\alpha_{2n+2}\sigma_{\ell}}{\frac{1}{4}+2^{-4\ell} u_{\ell}''\sigma_\ell}\left\langle J\uw,\partial_{jk}^2\nabla g(\uw) \right\rangle+\alpha_{2n+2}2^{-4\ell}a_{ij}(\uw).
\end{align*}

Because $u$ satisfies \eqref{eq homerror}, it follows that for sufficiently large $\ell$ (for example $\ell \geq 2$) and $|r|< \frac{1}{2}$, \[\max\left(\frac{\sigma_{\ell}(r)}{\tfrac{1}{4}+2^{-4\ell}u_{\ell}''\sigma_{\ell}},\sigma_{\ell}(r)\right)\lesssim 1.\]
Thus, for $1\leq i,j\leq 2n$ and $|\uw|<\frac{1}{2}$,
\begin{equation}
    \label{eq cinmatrix lower bound}
    \left|\frac{t}{\oy}\langle \Xi_{ij}^{\ell}, N\rangle-2\alpha_{2n+2}\partial^2_{ij}g(\uw)\right|\lesssim 2^{-2\ell}.
\end{equation}
From $\eqref{eq oa}$, we infer that $|\oa|\lesssim |\alpha_{2n+2}|$. This, combined with the equation $\langle N, \Xi_{2n+1}\rangle=0$, further implies that $|\oa|\lesssim |\alpha_{2n+2}|$. Since $N$ is a unit vector, it follows that $|\alpha_{2n+2}|\gtrsim 1$.

The second part of Lemma \ref{lem:radprop} implies that 
\[\det g''(\uw)=\left(\frac{2r^3}{r}\right)^{2n-1}6r^2,\]
which is uniformly bounded away from $0$, since $\frac{1}{8}\leq r=|\uw|\leq \frac{1}{2}$ when $\uw=\frac{2^{-2\ell}\ux-\uy}{t}$ is in the support of $a_\ell$ for $\ell<\frac{k}{4}$. Hence, we conclude that $g''(\uw)$ is invertible. 

Let $\bar{\mathscr{C}}_{\ell}(x,t,y)$ denote the $2n\times2n$ matrix with the $(i,j)$-th entry given by $\langle \Xi_{ij}^{\ell}, N\rangle$ for $1\leq i,j\leq 2n$. Using \eqref{eq cinmatrix lower bound}, we conclude that $\bar{\mathscr{C}}_{\ell}(x,t,y)$ is invertible as well, with its determinant bounded from below by an absolute constant independent of $\ell$ for $\ell\geq 400n$.

Further, observe that for $i=2n+1$, $\langle N,\Xi_{ij}\rangle=\langle N, \oy^{-1}\Xi_{j}\rangle=0$ for $1\leq j\leq 2n+1$. Since the cinematic curvature matrix at $(x,t,y)$ takes the form
\[\mathscr{C}^\ell(x,t,y)=2\frac{\alpha_{2n+2}}{\oy}\begin{pmatrix}
\bar{\mathscr{C}}_\ell& & 0\\
0& & 0
\end{pmatrix},\]
we conclude that it is of rank $2n$. This finishes the proof of Proposition \ref{prop kernel est np}.

\section{Proof of Propositions \ref{prop:L2osc-est} and \ref{prop kernel est eq}}
\label{sec eq im}
In this section, we prove the fixed time $L^2$ estimate \eqref{prop:L2osc-est} for the operator $T^k_t$ and the kernel estimate \eqref{eq Kk L1Linf} for the kernel $K^k_{tt'}$ corresponding to the equatorial and intermediate regions. We shall also observe that the former estimate is stable in the intermediate region, thus obtaining the $L^2$ estimate for $T^k$ with kernel supported around the north pole as well. We shall use H\"ormander's $L^2$ estimate \cite[ch. IX.1]{Stein-harmonic} and the curvature condition in \cite[Proposition 3.4]{MSS93} again to prove Propositions \ref{fix time Tkest} and \ref{prop kernel est eq} respectively. Since we use them for a fixed oscillatory operator (and not a family of them as in the previous section), these results are directly applicable.

\subsection{Estimates around the Equator}
Recall that the phase function of $T_k$ around the equator is given by $\Phi=\Phi^\eq=y_1H_1(x,t,y',\oy)$ with
\[\nabla_{(x,t)}H_1(x,t,y',\oy)=-\left[\left(\frac{\partial F}{\partial y_1}\right)^{-1}\nabla_{x,t}F\right]\bigg|_{(x,t,H_1(x,t,y',\oy), y', \oy)}.\]
Here $F(x,t,y)$ is the defining function of the Kor\'anyi sphere centred at $x$ and of radius $t$ 
\[F(x,t,y)=|\ux-\uy|^4+|\ox-\oy+\tfrac{1}{2}\ux^{\intercal}J\uy|^2-t^4.\]
The amplitude $b=b_\eq(x,t,y)$ is compactly supported in a set where $x$ is small, $|y_1|\in[1/2,2]$, $t^{-1}|x'-y'|<2^{-400n}$, $t^{-2}|\ox-\oy+\frac{1}{2}\ux^{\intercal}J(H_1(x,t,y',\oy),y')|<2^{-400n}$ and $t^{-1}|x_1-H_1(x,t,y',\oy)|>1-2^{-400n}$.
\subsubsection{Proof of Proposition \ref{prop:L2osc-est} at the Equator}
\label{subsubsec L2 eq}
By H\"ormander's $L^2$ estimate, it suffices to prove that the mixed Hessian $\Phi_{x,y}''$ is of rank $2n+1$ in the support of $b$, with the determinant bounded from below by an absolute constant independent of $t$. 

Let $\Theta(x,y):=(\ux-\uy, \ox-\oy+\tfrac{1}{2}\ux^\intercal J \uy)$. Since $\det \Phi_{x,y}''(x,t,y)=\det\Phi_{x,y}''(0,t, \Theta(x,y))$ (see \S\ref{section trans invar}) , it suffices to verify the curvature condition under the assumptions $x=0$, $|\oy|\in [1/2,2]$, $t^{-1}|y'|<2^{-400n}$, $t^{-2}|\oy|<2^{-400n}$ and $t^{-1}|H_1(0,t,y',\oy)|>t(1-2^{-400n})$.

Let \[\Xi(x,t,y)=\nabla_{(x,t)}\Phi^\eq(x,t,y).\]
We calculate
\[\Xi(0,t,y)=y_1\nabla_{(x,t)}H_1(0,t,y',\oy)=y_1\left[\frac{1}{4|\uy|^2y_1}(4|\uy|^2\uy+\oy(J\uy+2e_{2n+1})-4t^3e_{2n+2})\right]\Big|_{|y_1=H_1(0 ,t,y',\oy)}.\]
Let $\Xi_1:=\frac{\partial}{\partial{y_1}}\Xi=\nabla_{(x,t)}H_1(0,t,y',\oy)$.
\begin{rem}
\label{rem simplify vec}
We have
\[\nabla_{(x,t)}H_1(x ,t,y',\oy)=-\left[\left(\frac{\partial F}{\partial y_1}\right)^{-1}\nabla_{x,t}F\right]\bigg|_{y_1=H_1(x,t,y',\oy)}.\]
Taking partial derivatives with respect to $y_j$ and using the chain rule, we obtain
\begin{align*}
\frac{\partial}{\partial{y_j}}\nabla_{(x,t)}H_1(0,t,y')&=-\left[\left(\frac{\partial F}{\partial y_1}\right)^{-1}\left(\frac{\partial}{\partial y_j}+\frac{\partial H_1}{\partial y_j}\frac{\partial}{\partial y_1}\right)\nabla_{x,t}F\right]\bigg|_{x=0, y_1=H_1(0 ,t,y',\oy)}\\
&+ \left[\nabla_{x,t}F\left(\frac{\partial F}{\partial y_1}\right)^{-2}\left(\frac{\partial}{\partial y_j}+\frac{\partial H_1}{\partial y_j}\frac{\partial}{\partial y_1}\right)\frac{\partial F}{\partial y_1}\right]\bigg|_{x=0, y_1= H_1(0 ,t,y',\oy)}
\end{align*}
for $2\leq j\leq 2n+1$. Observe that the second term is a scalar multiple of $\Xi_1=\nabla_{(x,t)}H_1(0,t,y',\oy)$. Define \[\Xi_j:=-\left[\left(\frac{\partial F}{\partial y_1}\right)^{-1}\left(\frac{\partial}{\partial y_j}+\frac{\partial H_1}{\partial y_j}\frac{\partial}{\partial y_1}\right)\nabla_{x,t}F\right]\bigg|_{x=0, y_1=H_1(0 ,t,y',\oy)}\] for $2\leq j\leq 2n+1$. Then it follows that the tangent space of $\Xi$ at $(x,t,y)$ is spanned by $\Xi_1, \ldots, \Xi_{2n+1}$.
We shall repeatedly use this kind of observation to simplify the expressions for tangent vectors and their derivatives. 
\end{rem}
Using the fact that \[
\frac{\partial H_1(0,t,y')}{\partial{y_j}}=-\frac{y_j}{H_1(0,t,y',\oy)},\,\,\frac{\partial H_1(0,t,y')}{\partial{\oy}}=-\frac{\oy}{2|\uy|^2y_1}\bigg|_{y_1=H_1(0,t,y')},
\]
we calculate
\begin{align*}
    \Xi_1&=\nabla_{(x,t)}H_1(0,t,y',\oy)=\left[\frac{1}{4|\uy|^2y_1}(4|\uy|^2\uy+\oy(J\uy+2e_{2n+1})-4t^3e_{2n+2})\right]\bigg|_{y_1=H_1(0 ,t,y',\oy)},\\
    \Xi_j&=y_1\left[\frac{1}{4|\uy|^2y_1}\left(8y_j\uy+(4|\uy|^2+\oy J)e_j-\frac{y_j}{y_1}\left(8y_1\uy+(4|\uy|^2+\oy J)e_1\right)\right)\right]\bigg|_{y_1=H_1(0 ,t,y',\oy)},\,\, 2\leq j\leq 2n,\\
     \Xi_{2n+1}&=y_1\left[\frac{1}{4|\uy|^2y_1}\left(J\uy+2e_{2n+1}-\frac{\oy}{2|\uy|^2y_1}\left(8y_1\uy+(4|\uy|^2+\oy J)e_1\right)\right)\right]\bigg|_{y_1=H_1(0 ,t,y',\oy)}.
\end{align*}

Henceforth, $I$ shall denote the $2n\times 2n$ identity matrix $I_{2n}$. The above discussion implies that the mixed Hessian $\Phi_{(x,y)}''$ at $x=0$ and $y'=0$ has the same determinant as the matrix
\begin{equation}
    \label{eq rot matrix eq}
    y_1\left[\frac{1}{4|\uy|^2y_1}\begin{pmatrix}
I_{y_1}\left(4|\uy|^2I-\oy J\right)& &2\oy e_1\\
& & \\
-\frac{\oy}{2|\uy|^2y_1}\left(12|\uy|^2e_1+\oy Je_1\right)^{\intercal}+(J\uy)^{\intercal}& & 2
\end{pmatrix}\right]\Bigg|_{y_1=H_1(0,t,0,\oy), y'=0}.
\end{equation}
Here $I_{y_1}$ is the $2n\times 2n$ diagonal matrix with $y_1$ as the first diagonal entry and the rest being $1$. Henceforth, we shall abbreviate $H_1(0,t,0,\oy)$ to $H_1$. The determinant of the above matrix is equal to
\[y_1^{2n+1}\left(\left[\frac{1}{4|H_1|^2H_1}\right]^{2n+1}\det\left(4|H_1|^2I-\oy J\right)(2-\gamma)\right),\]
with $\gamma:=\left[\left(-\frac{\oy}{2|\uy|^2y_1}\left(12|\uy|^2e_1+\oy Je_1\right)+J\uy\right)^{\intercal}\left(4|\uy|^2I-\oy J\right)^{-1}(\frac{2\oy}{y_1}e_1)\right]\Big|_{y_1=H_1,y'=0}$.

We use Lemma $\ref{lem matrix inv}$ to obtain $\left(4|\uy|^2I-\oy J\right)^{-1}=\frac{1}{16|\uy|^4+|\oy|^2}\left(4|\uy|^2I+\oy J\right)$. Recalling that $|\uy|=|H_1e_1|=|H_1|$, we calculate
\begin{align*}
\gamma&=
\frac{2\oy}{H_1(16|H_1|^4+|\oy|^2)}\left(-\frac{1}{2|H_1|^2H_1}\left(12|H_1|^2\oy e_1+(|\oy|^2-2|H_1|^4)Je_1\right)\right)^{\intercal}\left(4|H_1|^2e_1+\oy Je_1\right)\\
&=-\frac{\oy^2}{H_1^4(16|H_1|^4+|\oy|^2)}\left(48|H_1|^4-2|H_1|^4+|\oy|^2\right)\leq 0.
\end{align*}
Thus the absolute value of the determinant of the matrix in \eqref{eq rot matrix eq} greater than or equal to 
\[|y_1|^{2n+1}\det\left[\frac{1}{2|H_1|^3}\left(4|H_1|^2I-\oy J\right)\right].\]
Recall that $1>t^{-1}|H_1|>1-2^{-400n}$ and $t^{-2}|\oy|<2^{-400n}$. Thus we have
$16|H_1|^4+|\oy|^2\geq 1$. Using Corollary \ref{cor matrix inv est} and the fact that $|y_1|\sim 1$, we conclude that 
\[|y_1|^{2n+1}\det\left[\frac{1}{2|H_1|^3}\left(4|H_1|^2I-\oy J\right)\right]\gtrsim 1,\]
and the implicit constant is independent of $t\in [1,2]$. It follows that the same is true for $\Phi_{x,y}''$ or in other words, the mixed Hessian of $\Phi$ is of rank $2n+1$. This concludes the proof of Proposition \ref{prop:L2osc-est} around the equator.
\subsubsection{Proof of Proposition \ref{prop kernel est eq} around the Equator}
\label{subsubsec kernel eq}
Let $N=(\ua,\oa,\alpha_{2n+2})$ be a unit normal vector such that $\langle N, \Xi_j\rangle=0$ for $1\leq j\leq 2n+1$. Let $\mathscr C(x,t,y)$ be the $(2n+1)\times (2n+1)$ cinematic curvature matrix with respect to $N$ given by
\begin{equation}
\label{eq curv matrix eq}
\mathscr C_{ij} = \frac{\partial^2}{\partial y_i\partial y_j} \inn{ N}{\Xi}    
\end{equation}  
We argue as in the proof of \cite[Proposition 3.4]{MSS93} according to which Proposition \ref{prop kernel est eq} holds provided that $\rank{\Phi_{(x,t),y}''}=2n+1$  and  the additional  curvature condition
\Be \label{eq:rank-curv eq}\inn {N}{ \Xi_{j}  }=0 ,\,\, j=1,\dots, 2n+1 \quad \implies \quad \rank\, \mathscr C = 2n\Ee 
is satisfied; i.e. the conic surface  $\Sigma_{x,t}$ parametrized by $y\mapsto \Xi(x,t,y)$  has the maximal number $2n$ of nonvanishing principal curvatures. 
It remains to verify \eqref{eq:rank-curv eq}.

Since $\rank{\mathscr{C}(x,t,y)}=\rank{\mathscr{C}(0,t,\Theta(x,y))}$, we may again assume that $x=0$ (see \S\ref{section trans invar}). Further, when $y_i=0$ for $2\leq i\leq 2n$, the equations $\langle N, \Xi_j\rangle=0$ for $1\leq j\leq 2n$ can be expressed as
\begin{equation*}
\left(4|\uy|^2I-\oy J\right)\ua\big|_{y_1=H_1}= -\frac{1}{y_1}(2\oy\oa-4t^3\alpha_{2n+2})e_1:=Te_1,
\end{equation*}
Solving, we get \begin{equation}
    \label{eq equator cin1}
    \ua=T\left(4|\uy|^2I-\oy J\right)^{-1}e_1\big|_{y_1=H_1}=\frac{T}{16|\uy|^4+|\oy|^2}\left(4|\uy|^2I+\oy J\right)e_{1}\Big|_{y_1=H_1}.
\end{equation} 
\begin{rem}
\label{rem N bdd below}
Equation \eqref{eq equator cin1} implies that $|\ua|\lesssim T$. Further, since $\left|\frac{\oy}{2|\uy|^2H_1}\right|\lesssim 2^{-400n}$, 
the equation $\langle N, \Xi_{2n+1}\rangle=0$ implies that $|\oa|\lesssim |\ua|$ and thus $|\oa|\lesssim T$ as well. Using the definition of $T$, we have that $T\gtrsim |\alpha_{2n+2}|-2^{-400n}|\oa|$ and therefore $|\alpha_{2n+2}|\lesssim T$. We conclude that $|N|\lesssim T$ and since $N$ is a unit vector, it follows that $|T|\gtrsim 1$.
\end{rem}

We now calculate the derivatives of the tangent vectors, using subscripts to denote the corresponding partial derivatives. Observe that $\Xi_{1j}$ is a scalar multiple of $\Xi_j$ for $2\leq j\leq 2n+1$, while $\Xi_{11}=0$. Thus $\langle \Xi_{1j},N\rangle=0$ for $1\leq j\leq 2n+1$. 

Let $\delta_{ij}:=\delta_0(i-j)$ where $\delta_0$ denotes the Dirac measure at the origin in $\bbR$. Recall that $H_1=H_1(0,t,0,\oy)$ and thus $|\uy|=|H_1|$ when $y'=0$. For $2\leq i,j\leq 2n+1$, we have
\begin{align*}
\langle \Xi_{ij}, N\rangle&=\delta_{ij}\frac{y_1}{4|\uy|^2y_1}\left[8\uy-\frac{1}{y_1}\left(8y_1\uy+(4|\uy|^2I+\oy J)e_1\right)\right]^{\intercal}\bigg|_{y_1=H_1}\ua\\
&=-\delta_{ij}\frac{y_1}{4|H_1|^2H_1^2}\left(4|H_1|^2I+\oy J)e_1\right)^{\intercal}\ua.    
\end{align*}

Here we have used that $y_j=0$ for $2\leq j\leq 2n$ and also the fact that the dot product of $N$ with terms involving the first derivatives of $\nabla_{(x,t)}F$ is zero since they lie in the tangent space spanned by vectors $\Xi_1, \ldots, \Xi_{2n+1}$.
Using \eqref{eq equator cin1} and the properties $J^2=-I_{2n}$ and $J^\intercal=-J$, we calculate
\[
\langle \Xi_{ij}, N\rangle= -y_1\delta_{ij}\frac{1}{4|H_1|^2H_1^2}\frac{T}{16|H_1|^4+|\oy|^2}({16|H_1|^4+|\oy|^2})
=-y_1\delta_{ij}\frac{T}{4|H_1|^4}.
\]
For $i=2n+1$ and $2\leq j\leq 2n$, we have 
\begin{align*}
    \langle \Xi_{ij},N\rangle&=\frac{y_1}{4|H_1|^2H_1}\left[Je_j-\frac{\oy}{2|H_1|^2H_1}(8H_1e_j)\right]^{\intercal}\ua\\
    &=\frac{y_1}{4|H_1|^3}\left[\frac{4T}{|H_1|^2(16|H_1|^4+|\oy|^2)}\left(|H_1|^4+|\oy|^2\right)\right]e_1Je_j
\end{align*}
and 
\begin{align*}
     \langle \Xi_{ii},N\rangle&=\frac{y_1}{4|H_1|^2H_1}\left(-\frac{1}{2|H_1|^2H_1}\right)\left[\frac{2|H_1|^4+\oy^2}{|H_1|^2}\left(6e_1+\frac{3\oy}{2|H_1|^2}Je_1\right)\right]^{\intercal}N\\
     %Commented out one step
     %&=\frac{y_1}{4|H_1|^2H_1}\left(-\frac{3T}{2|H_1|^4H_1}\right)\left[\frac{2|H_1|^4+\oy^2}{16|H_1|^4+4H_1^2}\left(8|H_1|^4+2|H_1|^2\right)\right]\\   
     &=\frac{y_1}{4|H_1|^2H_1}\left(-\frac{3T}{4|H_1|^4H_1}\right)(2|H_1|^4+\oy^2).
\end{align*}     
Let $P:\bbR^{2n}\to \bbR^{2n-1}$ be the projection onto the orthogonal complement of $e_1$. At the equator, when $x=0$, $y'=0$ and $\oy=0$, we have $H_1=t$ and the cinematic curvature matrix 
with respect to $N$ is given by
\[\mathscr C\Big|_{x=0,y_1=te_1,\oy=0}=\frac{T}{4t^3}\begin{pmatrix}
0& & 0& &0\\
0& & -I_{2n-1}& & \frac{1}{4t}Pe_1Je_j\\
0& &  \frac{1}{4t}(e_1Je_j)^{\intercal}P^\intercal& & -\frac{3}{2}
\end{pmatrix},\]
Since $|T|\gtrsim 1$, the rank of the above matrix is $2n$, provided $\left|-\frac{3}{2}+\frac{1}{16t^2}|e_1J|^2\right|\neq 0$. This is true, for \[\frac{1}{16t^2}|e_1J|^2=\frac{1}{16t^2}\leq \frac{1}{16}\]
when $t\in [1,2]$. This verifies \eqref{eq:rank-curv eq}, and finishes the proof of Proposition \ref{prop kernel est eq} at the equator.

\subsection{Estimates in the Intermediate Region}
Recall that the phase function of $T_k$ in the intermediate region is given by $\Phi=\Phi^\im=\oy\oH(x,t,\uy)$ with
\[\nabla_{(x,t)}\oH(x,t,y',\oy)=-\left[\left(\frac{\partial F}{\partial \oy}\right)^{-1}\nabla_{x,t}F\right]\bigg|_{(x,t,\oH(x,t,\uy), \uy)},\]
where $F(x,t,y)$ is the defining function of the Kor\'anyi sphere of radius $t$.
The amplitude $b=b_\im$ is supported in a small neighborhood of the set where $x$ is small, $|\oy|\in [1/2,2]$, $t^{-1}|x'-y'|<2^{-400n}$ and
$\min\left(t^{-1}|x_1-y_1|, t^{-2}|\ox-\oH(x,t,\uy)+\tfrac{1}{2}\ux^{\intercal}J\uy|\right)>2^{-400n}$.
\subsubsection{Proof of Proposition \ref{fix time Tkest} in the Intermediate region}
\label{subsubsec IM L2}
By H\"ormander's $L^2$ estimate, it again suffices to prove that the mixed Hessian $\Phi_{x,y}''$ is of rank $2n+1$ in the support of $b$, with the determinant bounded from below by an absolute constant independent of $t$. 

Because of the fact that $\det \Phi_{x,y}''(x,t,y)=\det\Phi_{x,y}''(0,t, \Theta(x,y))$ (with $\Theta(x,y):=(\ux-\uy, \ox-\oy+\tfrac{1}{2}\ux^\intercal J \uy)$, see \S\ref{section trans invar}), it again suffices to verify the curvature condition under the assumptions that $x=0$, $|\oy|\in [1/2,1]$, $t^{-1}|y'|<2^{-400n}$ and $\min\left(t^{-1}|y_1|, t^{-2}|\oH(x,t,\uy)|\right)>2^{-400n}$.

Let \[\Xi(x,t,y)=\nabla_{(x,t)}\Phi^\eq(x,t,y).\]
We calculate
\[\Xi(0,t,y)=\oy\nabla_{(x,t)}\oH(0,t,y',\oy)=\oy\left[\frac{1}{2\oy}(4|\uy|^2\uy+\oy J\uy+2\oy e_{2n+1})-4t^3e_{2n+2})\right]\bigg|_{\oy=\oH(0 ,t, \uy)}.\]
The tangent vectors at $x=0$ are spanned by
\begin{align*}
    \Xi_j&=\frac{\oy}{2\oH(0,t,\uy)}\left[8y_j\uy+(4|\uy|^2+\oH(0,t,\uy)J)e_j-\frac{4|\uy|^2y_j}{\oH(0,t,\uy)}\left(\frac{1}{2}J\uy+e_{2n+1}\right)\right],\,\, 1\leq j\leq 2n,\\
     \Xi_{2n+1}&=\nabla_{(x,t)}H(0,t,\uy).
\end{align*} Here we have used the same idea as in Remark \ref{rem simplify vec} to get a simplified form for the tangent vectors.
We abbreviate $\oH(0,t,y_1e_1)=\oH$. The mixed Hessian of $\Phi$ when $x=0$ and $y_j=0$ for $2\leq j\leq 2n$ has determinant equal to that of the matrix
\[
\frac{\oy}{\oH}\begin{pmatrix}
4\uy\uy^{\intercal}+2|\uy|^2I-\frac{\oH}{2} J-\frac{|\uy|^2}{\oH}\uy(J\uy)^{\intercal}& &-\frac{2|\uy|^2}{\oH} \uy\\
& & \\
\frac{1}{\oH}\left[(2|\uy|^2I+\frac{\oH}{2} J)\uy\right]^{\intercal}& & 1
\end{pmatrix}.\]

Using Schur's complement, the absolute value of the determinant of the above matrix is 
\[\left|\frac{\oy}{\oH}\right|^{2n+1}\det X \left[1-\frac{1}{\oH}\left(2|\uy|^2I+\frac{\oH}{2} J)\uy\right)^{\intercal}X^{-1}\left(-\frac{2|\uy|^2}{\oH}\uy\right)\right],\]
where
\[X=4\uy\uy^{\intercal}+2|\uy|^2I-\frac{\oH}{2} J-\frac{|\uy|^2}{\oH}\uy(J\uy)^{\intercal}.\]
Using Corollary \ref{cor inn prod} with $\sigma=2|\uy|^2, \lambda=4, \kappa=-\frac{\oH}{2}$ and $\gamma=-\frac{|\uy|^2}{\oH}$, we get $ X^{-1}\uy=\frac{4}{46|\uy|^4+|\oH|^2}(2|\uy|^2I+\frac{\oH}{2} J)\uy$. 
Thus
\[-\frac{1}{\oH}\left(2|\uy|^2I+\frac{\oH}{2} J)\uy\right)^{\intercal}X^{-1}\left(-\frac{2|\uy|^2}{\oH}\uy\right)=\frac{2|\uy|^2}{\oH^2}\frac{16|\uy|^4+|\oH|^2}{46|\uy|^4+|\oH|^2}|\uy|^2\geq 0.\]
Hence \[\left|\det X \left[1-\frac{1}{\oH}\left(2|\uy|^2I+\frac{\oH}{2} J)\uy\right)^{\intercal}X^{-1}\left(-\frac{2|\uy|^2}{\oH}\uy\right)\right]\right|\geq |\det X|.\]
Using Corollary $\ref{cor matrix inv est}$ for $X$ with 
$\sigma^2+\kappa^2=4|\uy|^4+\frac{|\oH|^2}{4}\geq \frac{t^4}{4}\geq \frac{1}{4}$, we conclude that $|\det X|\gtrsim_n 1$ uniformly in $t\in[1,2]$. Since $\left|\frac{\oy}{\oH}\right|^{2n+1}\gtrsim 1$ in the support of $b=b_\im$, it follows that $\det\Phi_{x,y}''\gtrsim_n 1$ uniformly in $t\in[1,2]$ as well. In other words, the mixed Hessian has rank $2n+1$, which concludes the proof.

\subsubsection{Proof of Proposition \ref{prop kernel est eq} in the Intermediate region}
\label{subsubsec IM kernel}
Let $N=(\ua,\oa,\alpha_{2n+2})$ be a unit normal vector satisfying $\langle N, \Xi_j\rangle=0$ for $1\leq j\leq 2n+1$. Let $\mathscr C(x,t,y)$ be the $(2n+1)\times (2n+1)$ cinematic curvature matrix with respect to $N$. To prove Proposition \ref{prop kernel est eq} in the intermediate region, we need to verify the additional curvature condition \eqref{eq:rank-curv eq} for $\mathscr C(x,t,y)$. We may again assume without loss of generality that $x=0$.

When $y_i=0$ for $2\leq i\leq 2n$, the equations $\langle N, \Xi_j\rangle=0$ for $1\leq j\leq 2n$ can be written down in the matrix form as 
\begin{equation*}
X\ua=\left(4\uy\uy^{\intercal}+2|\uy|^2I-\frac{\oH}{2} J-\frac{|\uy|^2}{\oH}\uy(J\uy)^{\intercal}\right)\ua= \oa\frac{2|\uy|^2}{\oH} \uy.
\end{equation*}
Thus \begin{equation}
    \label{eq equator cin2}
    \ua=\oa\frac{2|\uy|^2}{\oH} X^{-1}\uy=\frac{2\oa|\uy|^2}{\oH(46|\uy|^4+|\oH|^2)}(8|\uy|^2I+2\oH J)\uy.
    %Before 1/2
    % \ua=\oa\frac{2|\uy|^2}{\oH} X^{-1}\uy=\frac{2\oa|\uy|^2}{\oH(10|\uy|^4+|\oH|^2)}(2|\uy|^2I+\oH J)\uy.
\end{equation} 
\begin{rem}
Arguing as in Remark \ref{rem N bdd below}, using the equations above and the fact that $|N|=1$, it is not difficult to see that $|\oa|\gtrsim 1$.
\end{rem}
We now calculate the derivatives of the tangent vectors, denoting them by subscripts. Observe that for $i=2n+1$, $\Xi_{ij}$ is a scalar multiple of $\Xi_j$ for $1\leq j\leq 2n$, while $\Xi_{ii}=0$. Thus $\langle \Xi_{ij},N\rangle=0$ for $1\leq j\leq 2n+1$. Further, for $2\leq i,j\leq 2n$, we have
\[\langle \Xi_{ij}, N\rangle=\frac{\oy}{\oH}\delta_{ij}\left[4\uy-\frac{2|\uy|^2}{\oH}\left(\frac{1}{2}J\uy+e_{2n+1}\right)\right]^{\intercal}N.
%=-\delta_{ij}\frac{y_1}{4|\uy|^2y_1^2}\left[\left(4|\uy|^2+2\oy J)e_1\right)\right]^{\intercal}N.
\]
Here we use the fact that $y_j=0$ for $2\leq j\leq 2n$ and the dot product of $N$ with terms involving the first derivatives of $\nabla_{(x,t)}F$ disappears since these terms lie in the space spanned by the tangent vectors.
For $2\leq i\leq 2n$, using \eqref{eq equator cin2}, we calculate
\[
\langle \Xi_{ij}, N\rangle= \frac{\oy}{\oH}\delta_{ij}\frac{2\oa|\uy|^2}{\oH}\left(\frac{30|\uy|^4}{46|\uy|^4+|\oH|^2}-1\right)=-\frac{\oy}{\oH}\delta_{ij}\frac{2\oa|\uy|^2}{\oH}\left(\frac{16|\uy|^4+|\oH|^2}{46|\uy|^4+|\oH|^2}\right).
\]
%Before 1/2
% \[
% \langle \Xi_{ij}, N\rangle= \frac{\oy}{\oH}\delta_{ij}\frac{2\oa|\uy|^2}{\oH}\left(\frac{6|\uy|^4}{10|\uy|^4+|\oH|^2}-1\right)=-\frac{\oy}{\oH}\delta_{ij}\frac{2\oa|\uy|^2}{\oH}\left(\frac{4|\uy|^4+|\oH|^2}{10|\uy|^4+|\oH|^2}\right).
% \]
For $i=1$ and $2\leq j\leq 2n$, using the fact that $\frac{\partial }{\partial y_1}\oH(0,t,y_1e_1)=-\frac{2|\uy|^2y_1}{\oH}$, we obtain
\begin{align*}
%\frac{2\oa|\uy|^2}{\oy}
    \langle \Xi_{ij},N\rangle&=\frac{\oy}{\oH}\left[4y_1e_j-\frac{|\uy|^2y_1}{\oH}Je_j\right]^{\intercal}N=-\frac{\oy}{\oH}\frac{16\oa|\uy|^2}{|\oH|^2}y_1\uy Je_j\left(\frac{|\oH|^2+|\uy|^4}{46|\uy|^4+|\oH|^2}\right)
    %Before 1/2
    %  \langle \Xi_{ij},N\rangle&=\frac{\oy}{\oH}\left[4y_1e_j-\frac{2|\uy|^2y_1}{\oH}Je_j\right]^{\intercal}N=-\frac{\oy}{\oH}\frac{8\oa|\uy|^2}{|\oH|^2}y_1\uy Je_j\left(\frac{|\oH|^2+|\uy|^4}{10|\uy|^4+|\oH|^2}\right)
\end{align*}  
and
\begin{align*}
    \langle \Xi_{ii},N\rangle&=\frac{\oy}{\oH}\left[12\uy-\frac{1}{2}\left(\frac{10|\uy|^2}{\oH}+\frac{4|\uy|^6}{|\oH|^3}\right)J\uy-\frac{1}{\oH}\left(6|\uy|^2+\frac{4|\uy|^6}{|\oH|^2}\right)e_{2n+1}\right]^{\intercal}N\\
     &=\frac{2\oa\oy|\uy|^2}{\oH^2}\left[\frac{96|\uy|^4}{{46|\uy|^4+|\oH|^2}}-\frac{|\uy|^4}{|\oH|^2}\left(\frac{10|\oH|^2+4|\uy|^4}{46|\uy|^4+|\oH|^2}\right)-\frac{3|\oH|^2+2|\uy|^4}{|\oH|^2}\right].
    %
    % Before 1/2
    %  \langle \Xi_{ii},N\rangle&=\frac{\oy}{\oH}\left[12\uy-\left(\frac{10|\uy|^2}{\oH}+\frac{4|\uy|^6}{|\oH|^3}\right)J\uy-\frac{1}{\oH}\left(6|\uy|^2+\frac{4|\uy|^6}{|\oH|^2}\right)e_{2n+1}\right]^{\intercal}N\\
    %  &=\frac{2\oa\oy|\uy|^2}{\oH^2}\left[\frac{24|\uy|^4}{{10|\uy|^4+|\oH|^2}}-\frac{|\uy|^4}{|\oH|^2}\left(\frac{10|\oH|^2+4|\uy|^4}{10|\uy|^4+|\oH|^2}\right)-\frac{3|\oH|^2+2|\uy|^4}{|\oH|^2}\right].
    %
    %
    %  &=\frac{2\oa\oy|\uy|^2}{\oH^2}\left[\frac{14|\uy|^4}{{10|\uy|^4+|\oH|^2}}-3-\frac{2|\uy|^4}{|\oH|^2}\left(\frac{2|\uy|^4}{{10|\uy|^4+|\oH|^2}}+1\right)\right]\\
    %  &=\frac{2\oa\oy|\uy|^2}{\oH^2}\left[\frac{2|\uy|^4}{{10|\uy|^4+|\oH|^2}}\left(6-12\frac{|\uy|^4}{|\oH|^2}\right)-3\right]:=\frac{2\oa\oy|\uy|^2}{\oH^2}D.
\end{align*}
Simplifying, we get
\[\frac{96|\uy|^4}{{46|\uy|^4+|\oH|^2}}-\frac{|\uy|^4}{|\oH|^2}\left(\frac{10|\oH|^2+4|\uy|^4}{46|\uy|^4+|\oH|^2}\right)-\frac{3|\oH|^2+2|\uy|^4}{|\oH|^2}=\frac{12|\uy|^4}{{46|\uy|^4+|\oH|^2}}\left(7-8\frac{|\uy|^4}{|\oH|^2}\right)-3:=D.\]
%
%Before 1/2
% \[\frac{24|\uy|^4}{{10|\uy|^4+|\oH|^2}}-\frac{|\uy|^4}{|\oH|^2}\left(\frac{10|\oH|^2+4|\uy|^4}{10|\uy|^4+|\oH|^2}\right)-\frac{3|\oH|^2+2|\uy|^4}{|\oH|^2}=\frac{2|\uy|^4}{{10|\uy|^4+|\oH|^2}}\left(6-12\frac{|\uy|^4}{|\oH|^2}\right)-3:=D.\]
Thus the cinematic curvature matrix at $y=y_1e_1+\oH e_{2n+1}$ is given by
\begin{equation}
    \label{eq cin matrix IM}
    \frac{2\oa\oy|\uy|^2}{\oH^2}\begin{pmatrix}
D& & B^{\intercal}P^\intercal& &0\\
PB& & -\lambda I_{2n-1}& & 0\\
0& &0 & & 0
\end{pmatrix},
\end{equation}
%Before 1/2
% where $\lambda=\frac{4|\uy|^4+|\oH|^2}{10|\uy|^4+|\oH|^2}$, $B=-\frac{4y_1}{\oH}\left(\frac{|\oH|^2+|\uy|^4}{10|\uy|^4+|\oH|^2}\right)\uy^{\intercal}J$
where $\lambda=\frac{16|\uy|^4+|\oH|^2}{46|\uy|^4+|\oH|^2}$, $B=-\frac{8y_1}{\oH}\left(\frac{|\oH|^2+|\uy|^4}{46|\uy|^4+|\oH|^2}\right)\uy^{\intercal}J$, $P:\bbR^{2n}\to\bbR^{2n-1}$ is the projection onto the orthogonal complement of $e_1$ and $D$ is as defined above. Since $|\uy|^4+|\oH|^2=t^4$ with $t\in [1,2]$, we have that $\frac{1}{46}\leq\lambda\leq1$. Further, $\min(|\uy|, |\oy|, |\oH|)\geq 2^{-400n}$ and $|\oH|\leq 4$.
%in the support of the amplitude $b=b_\im$. 
Thus, the rank of the above matrix is $2n$, provided 
$|D+\lambda^{-1} B^{\intercal}B|$ is uniformly bounded from below by a positive constant. 
We have
\[|D+\lambda^{-1}B^{\intercal}B|\geq 3-\frac{12|\uy|^4}{{46|\uy|^4+|\oH|^2}}\left(7-8\frac{|\uy|^4}{|\oH|^2}\right)-\frac{64|y_1|^2}{|\oH|^2}\left(\frac{(|\oH|^2+|\uy|^4)^2}{(46|\uy|^4+|\oH|^2)(16|\uy|^4+|\oH|^2)}\right)|\uy|^2.\]
%Before 1/2
%\[|D+\lambda^{-1}B^{\intercal}B|\geq 3-\frac{2|\uy|^4}{{10|\uy|^4+|\oH|^2}}\left(6-12\frac{|\uy|^4}{|\oH|^2}\right)-\frac{16|y_1|^2}{|\oH|^2}\left(\frac{(|\oH|^2+|\uy|^4)^2}{(10|\uy|^4+|\oH|^2)(4|\uy|^4+|\oH|^2)}\right)|\uy|^2.\]
Now
%Before 1/2
% \begin{align*}
%      &\frac{2|\uy|^4}{{10|\uy|^4+|\oH|^2}}\left(6-12\frac{|\uy|^4}{|\oH|^2}\right)+\frac{16|y_1|^2}{|\oH|^2}\left(\frac{(|\oH|^2+|\uy|^4)^2}{(10|\uy|^4+|\oH|^2)(4|\uy|^4+|\oH|^2)}\right)|\uy|^2\\
%     &\leq \frac{2|\uy|^4}{{10|\uy|^4+|\oH|^2}}\left(6-12\frac{|\uy|^4}{|\oH|^2}\right)+16|\uy|^4\left(\frac{1+\frac{|\uy|^4}{|\oH|^2}}{10|\uy|^4+|\oH|^2}\right)
%     \leq \frac{|\uy|^4}{{10|\uy|^4+|\oH|^2}}\left(28-\frac{8|\uy|^4}{|\oH|^2}\right)\leq \frac{28}{10}.
% \end{align*}
\begin{align*}
     &\frac{12|\uy|^4}{{46|\uy|^4+|\oH|^2}}\left(7-8\frac{|\uy|^4}{|\oH|^2}\right)+\frac{64|y_1|^2}{|\oH|^2}\left(\frac{(|\oH|^2+|\uy|^4)^2}{(46|\uy|^4+|\oH|^2)(16|\uy|^4+|\oH|^2)}\right)|\uy|^2\\
    &\leq \frac{12|\uy|^4}{{46|\uy|^4+|\oH|^2}}\left(7-8\frac{|\uy|^4}{|\oH|^2}\right)+64|\uy|^4\left(\frac{1+\frac{|\uy|^4}{|\oH|^2}}{46|\uy|^4+|\oH|^2}\right)
    \leq \frac{|\uy|^4}{{46|\uy|^4+|\oH|^2}}\left(148-\frac{32|\uy|^4}{|\oH|^2}\right).
    %\leq \frac{28}{10}.
\end{align*}
We claim that \[ \frac{|\uy|^4}{{46|\uy|^4+|\oH|^2}}\left(148-\frac{32|\uy|^4}{|\oH|^2}\right)=\frac{\tfrac{|\uy|^4}{|\oH|^2}}{46\tfrac{|\uy|^4}{|\oH|^2}+1}\left(148-\frac{32|\uy|^4}{|\oH|^2}\right)\leq 2.81.\]
This can be seen by optimizing the function \[f(u)=\frac{u(148-32u)}{46u+1}\]
on the interval $[0,\infty].$ The equation $f'(u)=0$ has a unique solution in this interval, namely $u_0=\frac{3\sqrt{95}}{92}-\frac{1}{46}$. Further, $f''(u)=-\frac{13680}{(46u+1)^3}<0$ for $u\in [0,\infty)$ and thus $u_0$ is a point of maximum on the interval. Evaluating, we get
\[f(u_0)=\frac{2}{529}(859-12\sqrt{95})\sim 2.8054\]
Thus $|D+\lambda^{-1}B^{\intercal}B|\geq 3-2.81=0.19.$ We conclude that the rank of the cinematic curvature matrix is $2n$, thus verifying condition \eqref{eq:rank-curv eq} in the intermediate setting and finishing the proof of Proposition \ref{prop kernel est eq}.
%Previous calculation, all together
% \begin{align*}
%     |D+\lambda^{-1}B^{\intercal}B|&\geq 3-\frac{2|\uy|^4}{{10|\uy|^4+|\oH|^2}}\left(6-12\frac{|\uy|^4}{|\oH|^2}\right)-\frac{16|y_1|^2}{|\oH|^2}\left(\frac{(|\oH|^2+|\uy|^4)^2}{(10|\uy|^4+|\oH|^2)(4|\uy|^4+|\oH|^2)}\right)|\uy|^2\\
%     & \geq 3-\frac{2|\uy|^4}{{10|\uy|^4+|\oH|^2}}\left(6-12\frac{|\uy|^4}{|\oH|^2}\right)-16|\uy|^4\left(\frac{1+\frac{|\uy|^4}{|\oH|^2}}{10|\uy|^4+|\oH|^2}\right)\\
%     &\geq 3-\frac{|\uy|^4}{{10|\uy|^4+|\oH|^2}}\left(28-\frac{8|\uy|^4}{|\oH|^2}\right)\geq 3-\frac{28}{10}\geq 0.2.
% \end{align*}

\subsection{Proof of Proposition \ref{prop:L2osc-est} at the Poles}
\label{subsec L2 np}
We end this section by briefly describing how the calculation in $\S \ref{subsubsec IM L2}$ also yields the fixed time $L^2$ estimate for $T^k$ at the poles. In \S \ref{subsubsec IM L2}, we showed that the determinant of Hessian of the the phase function is bounded from below by a constant times the determinant of the matrix $X$ given by
\[X=4\uy\uy^{\intercal}+2|\uy|^2I-\frac{\oH}{2} J-\frac{|\uy|^2}{\oH}\uy(J\uy)^{\intercal}.\]
Here $\oH$ is defined uniquely such that $|\uy|^4+|\oH|^2=t^4$. At the poles, since $|\uy|=0$ and $\oH=t^2$, it follows that $X=-\frac{t^2}{2}J$ and 
\[|\det X|=\frac{t^2}{2} |\det J| \geq \frac{1}{2}\]
for $t\in [1,2]$ and $J$ satisfying the condition $J^2=-I$. 
Thus the mixed Hessian of the phase function has rank $2n+1$ at the poles as well, which implies the fixed time estimate \eqref{fix time Tkest} for the corresponding $T^k$ 
via H\"ormander's $L^2$ estimate. Observe that this argument works directly without using the Taylor's expansion \eqref{eq G Taylor exp} of the phase function.
\begin{rem}
We also note that arguing directly as above would not work when calculating the rank of the cinematic curvature matrix $\mathscr{C}$ around the poles. As is clear from \eqref{eq cin matrix IM}, $\mathscr{C}$ has rank $0$ when $|\uy|=0$. Thus the scaling argument of \S \ref{sec scaling} is required to understand the behaviour of the K\'oranyi sphere around the poles. 
\end{rem}

\section{Necessary Conditions}
\label{sec counter eg}
We provide five counter-examples, corresponding to each edge of the quadrilateral $\mathcal{R}$ in Theorem \ref{thm:max full} and one for the point $Q_2$, which show the necessity of all  the conditions 
in Theorem \ref{thm:max full}. 
They are suitable modifications of the examples in \cites{SchlagSogge1997, AHRS, RoosSeeger} for the Euclidean case, which were in turn adapted from standard examples for spherical means and maximal functions. The first four counter-examples are similar to the corresponding ones in \cites{roos2021lebesgue} for the maximal function associated to codimension two spheres on the Heisenberg groups. The fifth one is new and can be viewed as the replacement of a standard Knapp type example. 
It has the interesting feature that the dimensions of the set associated to the input test function are not equal to the dimensions of the set on which the output function (under the action of the operator) is large. This is quite unlike what is observed for the Euclidean Knapp examples.

We also briefly discuss the counter-examples which imply the necessary condition in Theorem \ref{thm single avg} for the single average and which can be easily inferred from the counter-examples for the maximal operator.

Recall that for $x=(\ux,\ox)\in \bbH^n$, the Kor\'anyi norm of $x$ is defined to be
\[|x|_K:=\left(|\ux|^4+|\ox|^2\right)^{\frac{1}{4}}.\]

\subsection{The line connecting \texorpdfstring{$Q_1$ and $Q_2$}{Q1 and Q2}}
\label{sec:Q1Q2}
This is the necessary condition $p\leq q$ imposed by translation invariance and noncompactness of the group $\bbH^n$ (see \cite{hormander1960}  for the analogous argument in the Euclidean case).

\subsection{\texorpdfstring{The line connecting $Q_2$ and $Q_3$}{The line connecting Q2 and Q3}} \label{sec:Q2Q3}
Let $B_{\delta}$ be the ball of radius $\delta$ centered at the origin. Let $f_{\delta}$ be the characteristic function of $B_{100\delta}$. Then \[\|f_{\delta}\|_p\approx \delta^{(2n+1)/p}.\] 
For $1\le t\le 2$ we consider the set
\[ R:= \{x=(\ubar{x}, \bar x) :1\leq |x|_K\leq 2\}.\]
Then $|R|\gc 1 $.
Let $\Sigma_{x}= \left\{\om=(\uom,\oom): |\om|_K=1, \left|\left(\tfrac{\ux}{|x|_K}, \tfrac{\ox}{|x|_K^2}\right)-(\uom,\oom)\right|\le \delta/4\right\}$, which has surface measure $\approx \delta^{2n}$.

For $x\in R$ and $\om \in \Sigma_{x}$, we set $t=|x|_K$. Then  \[|\ubar x-t\uom |\leq t|t^{-1}\ux-\uom|\le \delta\] and 
\[|\bar x-t^2\oom- \tfrac{t}{2} \ubar x^\intercal J\uom| \le 
t^2|t^{-2}\ox-\oom|+\frac{1}{2}|\ux^{\intercal}J(\ux-t\uom)|
\le 3\delta\]
(here we have used the skew symmetry of the $J$). We get 
\[f_{\delta}*\mu_t(\ubar{x},\bar{x})=\int_{|\om|_K=1} f_{\delta}(\ubar{x}-t{\uom},\bar{x}-t^2\oom-\tfrac{t}{2}\ubar{x}^\intercal\! J{\uom})\,d\mu({\om})\gtrsim \delta^{2n}\]
for  $x\in R$ and $t=|x|_K$. 
Passing to the maximal operator yields the inequality 
\[\delta^{2n}\lesssim \delta^{(2n+1)/p},\]
and consequently, the necessary condition
\begin{equation*}
\label{ball ineq}
2n\geq \frac{2n+1}{p},  
\end{equation*}
that is, $(1/p,1/q)$ lies on or above the line connecting $Q_2$ and $Q_3$.

\subsection{\texorpdfstring{The point $Q_2$}{The point Q2}} 
For $p=p_2:=\frac{2n+1}{2n}$ the $L^p\to L^p$ bound fails. Here one uses a modification of Stein's example \cite{SteinPNAS1976} for the Euclidean spherical maximal function.
One considers the function $f_\alpha$ defined by $f_\alpha(\ubar v, \overline{v})= |v|^{-\frac{2n+1}{p_2}} |\log |v||^{-\alpha}  $ for $|\ubar v|\leq 1/2$, $|\overline(v) |\leq 1$ which belongs to $L^{p_2}$ for $\alpha>1/p_2$. One finds that if $t(x)=|x|_K$ then for 
$\alpha<1$ the integrals $f*\sigma_{t(x)}(x)$ are $\infty$ on a set of positive measure. 

\subsection{\texorpdfstring{The line connecting $Q_1$ and $Q_4$}{The line connecting Q1 and Q4}} \label{sec:Q1Q4} For this line we just use the counterexample for the individual averaging operator, using it to bound the maximal function from below. 
Given $t\in [1,2]$, let $g_{\delta,t}$ be the characteristic function of the set
 $\{y=(\ubar{y},\bar{y}): | |y|_K-t|\le 100\delta\}$. Thus  $\|g_{\delta,t}\|_p\sim \delta^{1/p}. $ 

Let  $x=(\ubar{x},\bar{x})$ be such that  $|\ubar x|\le  \delta$ and $|\bar x| \le  \delta$. For any $\om=(\uom,\oom)$ with $|\om|_K=1$, we have $t|\ubar{x}^\intercal J{\om}|\lesssim 2\delta$. Furthermore,
\begin{align*}
    &|(\ux-t\uom, \ox-t^2\oom-\tfrac{t}{2}\ux^{\intercal}J\uom|_K^4-t^4\\
    &=|\ux-t\uom|^4+|\ox-t^2\oom-\tfrac{t}{2}\ux^{\intercal}J\uom|^2-t^4\\
    &\leq |\ux|^4+4t^2|\ux|^2|\uom|^2+2t^2|\ux|^2|\uom|^2+2t|\ux|^3|\uom|+2t^3|\ux||\uom|^3+t^4|\uom|^4\\
    &+|\ox|^2+t^4|\oom|^2+\tfrac{t^2}{4}|\ubar{x}^\intercal J{\om}|^2+2t^2|\ox||\oom|+t^3|\oom||\ubar{x}^\intercal J{\om}|+t|\ox||\ux^\intercal J\uom|-t^4\\
    &\lesssim \delta+t^4|\om|_K^4-t^4=\delta.
\end{align*}
Thus \[\left||(\ux-t\uom, \ox-t^2\oom-\tfrac{t}{2}\ux^{\intercal}J\uom|_K-t\right|\lesssim \frac{\delta}{t^3}\lesssim \delta,\]
implying that $|g_{\delta,t}*\sigma_t(x)|\gtrsim 1$. 
This yields the inequality $\delta^{(2n+1)/q}\le \delta^{1/p} $ which leads to the necessary condition
\begin{equation*}
\label{scaling ineq}
\frac {1}{q} \ge \frac{1}{2n+1} \frac 1p,   
\end{equation*} that is, $(1/p,1/q)$ lies on or above the line connecting $Q_1$ and $Q_4$.

\subsection{\texorpdfstring{The line connecting $Q_3$ and $Q_4$}{The line connecting Q3 and Q4, m=1}} \label{sec:Q3Q4}
Let $f_{\delta}$ be the characteristic function of the set 
\[\{y=(\uy,\oy):|\uy|\lesssim \delta^{1/4}, |\oy|\lesssim \delta\}.\]
Then $\|f_{\delta}\|_p\sim \delta^{\frac{2n}{4}+1}=\delta^{\frac{n}{2}+1}$.

For $t\in [1,2]$, let $R_{\delta,t}$ denote the set
\[R_{\delta,t}=\{x=(\ux,\ox):|\ux|\lesssim \delta^{\frac{3}{4}}, |\ox-t^2|\lesssim \delta\}.\]
Then $|R_{\delta,t}|\gtrsim \delta^{\frac{3n}{2}+1}$. Finally, let $\Sigma_{\delta,t}$ denote the set
\[\Sigma_{\delta,t}:=\{\om=(\uom,\oom):|\om|_K=1, |\uom|\lesssim \delta^{\frac{1}{4}}, \oom\geq 0\}.\]
The above condition on $|\uom|$ implies that $1-\oom\lesssim \delta$, for
\[1-\oom=\frac{1-|\oom|^2}{1+\oom}\leq 1-|\oom|^2=|\uom|^4\lesssim \delta.\]
Further $|\Sigma_{\delta,t}|\sim \delta^{\frac{2n}{4}}=\delta^{\frac{n}{2}}$ and we have

\begin{align*}
    |\ux-t\uom|&\lesssim |\ux|+t|\uom|\lesssim \delta^{\frac{3}{4}}+\delta^{\frac{1}{4}}\lesssim \delta^{\frac{1}{4}},\\
    |\ox-t^2\oom-\tfrac{t}{2}\ux^{\intercal}J\uom|&\lesssim |\ox-t^2|+t^2|(1-\oom)|+\frac{t}{2}|\ux^{\intercal}J\uom|\lesssim \delta+t^2\delta+\frac{t}{2}\delta^{\frac{3}{4}}\delta^{\frac{1}{4}}\lesssim \delta.
\end{align*}
As a consequence
\[f_{\delta}*\mu_t(\ubar{x},\bar{x})=\int_{|\om|_K=1} f_{\delta}(\ubar{x}-t{\om},\bar{x}-t^2\oom-\tfrac{t}{2}\ubar{x}^\intercal\! J{\om})\,d\mu({\om})\gtrsim \delta^{\frac{n}{2}}\]
for  $x\in R_{\delta,t}$. 
Passing to the maximal operator yields the inequality 
$\delta^{\frac{n}{2}}\delta^{(\frac{3n}{2}+1-1)\frac{1}{q}}\lesssim \delta^{(\frac{n}{2}+1)\frac{1}{p}}$ which leads to the necessary condition
\begin{equation*}
    \label{eq Q3Q4}
    n+\frac{3n}{q}\geq\frac{n+2}{p},
\end{equation*}
that is, $(1/p,1/q)$ lies on or above the line connecting $Q_3$ and $Q_4$.
\subsection{Necessary conditions for the averaging operator}
\label{sec: counter eg avg}
The necessary condition $p\geq q$ from \S\ref{sec:Q1Q2} corresponds to the line joining $(0,0)$ and $(1,1)$, while \S\ref{sec:Q1Q4} implies that $(1/p,1/q)$ lies on or above the line connecting $(0,0)$ and $(\frac{2n+1}{2n+2}, \frac{1}{2n+2})$.

For the line connecting $(\frac{2n+1}{2n+2}, \frac{1}{2n+2})$ and $(1,1)$, we use the same example as in \S\ref{sec:Q2Q3} but for a single average. More precisely, we set $t=1$ and test the output not on $R$ but on the set
\[ R_\delta:= \{x=(\ubar{x}, \bar x) :||x|_K-1|\leq \delta\}\]
with $|R_\delta|\sim \delta $.
As in \S\ref{sec:Q2Q3}, let $\Sigma_{x}= \left\{\om=(\uom,\oom): |\om|_K=1, \left|\left(\frac{\ux}{|x|_K}, \frac{\ox}{|x|_K^2}\right)-(\uom,\oom)\right|\le \delta/4\right\}$ with surface measure $\approx \delta^{2n}$.

For $x\in R_\delta$ and $\om \in \Sigma_{x}$, we have  \[|\ubar x-\uom |\leq |x|_K\left|\frac{\ux}{|x|_K}-\uom\right|+|\uom|\left||x|_K-1\right|\leq 2\delta\] and 
\[|\bar x-\oom- \tfrac{1}{2}\ubar x^\intercal J\uom| \le 
|x|_K^2\left|\frac{\ox}{|x|_K^2}-\oom\right|+|\oom|\left||x|_K^2-1\right|+\frac{1}{2}|\ux^{\intercal}J(\ux-t\uom)|
\le 4\delta.\]
Here we have used the skew symmetry of the $J$. We get 
\[f_{\delta}*\mu(\ubar{x},\bar{x})=\int_{|\om|_K=1} f_{\delta}(\ubar{x}-{\om},\bar{x}-\oom-\tfrac{1}{2}\ubar{x}^\intercal J{\om})\,d\mu({\om})\gtrsim \delta^{2n}\]
for  $x\in R_\delta$. This yields the inequality 
\[\delta^{2n}\delta^{1/q}\lesssim \delta^{(2n+1)/p},\]
and consequently, the necessary condition
\begin{equation}
2n+\frac{1}{q}\geq \frac{2n+1}{p},  
\end{equation}
that is, $(1/p,1/q)$ lies on or above the line connecting $(\frac{2n+1}{2n+2}, \frac{1}{2n+2})$ and $(1,1)$.

\section{Sparse Bounds}
\label{sec further}
As mentioned in the introduction, the principal goal  of  \cite{GangulyThangavelu}  was to derive   for the lacunary maximal operator $\fM^{\text{lac}}:=\sup_{k\geq 0}|\mathcal{A}_{2^k}(x)|$ an inequality of the form \Be\label{sparse-bound lac}  \int_{\bbH^n} \fM^{\text{lac}} f(x) w(x) dx \le C\sup\big\{  \La_{\cS, p_1,p_2} (f,w) :\,{\cS\, \mathrm{sparse}} \big\},
\Ee
where the supremum is taken over {\it sparse families } of nonisotropic Heisenberg cubes 
(see \cite{GangulyThangavelu} for precise definitions and constructions) and  the sparse form $\La_{\fS,p_1,p_2} $ is given by 
\[\La_{\cS,p_1,p_2} (f,w) = \sum_{S\in \cS} |S| \Big(\frac{1}{|S|} \int|f|^{p_1}\Big)^{1/p_1} \Big(\frac{1}{|S|} \int_S |f|^{p_2} \Big)^{1/p_2}.\]
By using the sharp $L^p\to L^q$ bounds in Theorem \ref{thm single avg} and by appealing to the arguments in \cites{BagchiHaitRoncalThangavelu, GangulyThangavelu}, we can prove the following theorem.
\begin{thm}
\label{thm sparse lac}
The sparse bound  \eqref{sparse-bound lac} 
holds  if  $(1/p_1, 1-1/p_2)$ lies in the interior of the triangle with vertices $(0,0), (1,1)$ and $(\tfrac{2n}{2n+1}, \frac 1{2n+1})$.
\end{thm}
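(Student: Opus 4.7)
The plan is to follow the sparse domination machinery developed for the lacunary spherical maximal function on $\bbH^n$ by Ganguly and Thangavelu \cite{GangulyThangavelu}, who in turn adapted Lacey's Euclidean approach \cite{laceyJdA19}. In that machinery the only quantitative input that determines the range of admissible sparse exponents $(1/p_1, 1-1/p_2)$ is a sharp $L^p \to L^q$ estimate for the single average $\mathcal{A}_1$; the entire geometric framework of nonisotropic dyadic cubes, the stopping-time argument, and the recursion are already in place. Thus the strategy is simply to feed Theorem \ref{thm single avg} into that framework and track which sparse triangle emerges. The improvement over \cite{GangulyThangavelu} comes from replacing their vertex $(\tfrac{2n+1}{2n+2},\tfrac{1}{2n+2})$ with the sharp vertex $(\tfrac{2n}{2n+1},\tfrac{1}{2n+1})$ in $(1/p,1/q)$-space, which lifts (after duality) to the new vertex $(\tfrac{2n}{2n+1},\tfrac{1}{2n+1})$ in the sparse plane.

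First, I would perform a nonisotropic Littlewood--Paley decomposition adapted to the Heisenberg dilations, writing $\mathcal{A}_{2^k} = \sum_{j\geq 0} \mathcal{A}_{2^k}^{(j)}$ where $\mathcal{A}_{2^k}^{(j)}$ isolates the frequency scale $2^j$ relative to the scale $2^k$ of the sphere. Scaling reduces the fixed-time analysis of $\mathcal{A}_{2^k}^{(j)}$ to that of $\mathcal{A}_1$ composed with a frequency projection, so Theorem \ref{thm single avg} yields, for $(1/p,1/q)$ in the \emph{open} triangle with vertices $(0,0), (1,1), (\tfrac{2n+1}{2n+2},\tfrac{1}{2n+2})$, a quantitative decay
\[
\|\mathcal{A}_{2^k}^{(j)} f\|_{L^q(\bbH^n)} \lesssim 2^{-j\delta(p,q)} \|f\|_{L^p(\bbH^n)}
\]
with $\delta(p,q)>0$ on the open triangle, uniformly in $k$. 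Vector-valued Littlewood--Paley then passes this to the lacunary maximal operator $M_j f := \sup_k |\mathcal{A}_{2^k}^{(j)} f|$.

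Second, I would invoke the abstract sparse domination principle exactly as formulated in \cite[\S4]{GangulyThangavelu} (cf.\ also \cite{BagchiHaitRoncalThangavelu}): given a nonisotropic dyadic cube $Q \subset \bbH^n$ and $f$ supported in a fixed dilate of $Q$, produce disjoint Heisenberg subcubes $\{Q_i\}\subset Q$ with $\sum|Q_i| \leq \tfrac12 |Q|$ such that outside $\bigcup Q_i$ the operator $\mathfrak{M}^{\mathrm{lac}}(f\mathbf{1}_Q)$ is pointwise controlled by the local average $\langle |f|^{p_1}\rangle_Q^{1/p_1}$. The selection of the exceptional cubes is done by a Calder\'on--Zygmund stopping-time argument at level $C\langle|f|^{p_1}\rangle_Q^{1/p_1}$, and the estimate outside $\bigcup Q_i$ is proved by testing against an arbitrary $g$ supported on $Q\setminus \bigcup Q_i$ with $\|g\|_{p_2} \leq |Q|^{1/p_2}$; the required bilinear estimate follows from summing the bounds on $M_j$ in $j$, which converges thanks to the decay $2^{-j\delta(p,q)}$. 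Iterating this stopping-time construction through all generations produces the sparse family $\cS$ and yields \eqref{sparse-bound lac}.

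The main obstacle is ensuring that the decay factor $\delta(p,q)>0$ coming from Theorem \ref{thm single avg} is genuinely positive for every pair $(p_1,p_2)$ corresponding to the interior of the sparse triangle with vertex $(\tfrac{2n}{2n+1},\tfrac{1}{2n+1})$. This amounts to checking that the duality/interpolation correspondence between the $L^p\to L^q$ boundedness region and the sparse region matches the two triangles, which reduces to a direct parameter count: the condition that $(1/p_1, 1-1/p_2)$ lies strictly inside the stated triangle is precisely what is needed to land inside the open triangle of Theorem \ref{thm single avg} after the appropriate rescaling in the stopping-time argument. Once this matching is in place, the argument of \cite{GangulyThangavelu} applies essentially verbatim, and there are no genuinely new analytic difficulties beyond the fixed-time bound already established in Theorem \ref{thm single avg}.
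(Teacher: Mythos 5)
Your overall strategy --- feed the single--average bound of Theorem \ref{thm single avg} into the Lacey/Ganguly--Thangavelu sparse machinery --- is exactly what the paper does (its proof of Theorem \ref{thm sparse lac} is precisely a citation of \cites{BagchiHaitRoncalThangavelu, GangulyThangavelu} together with Theorem \ref{thm single avg}). But your justification of the exponent range contains a concrete error that would derail the "direct parameter count" you defer to. You have swapped the two vertices: Ganguly--Thangavelu's $L^p\to L^q$ region for $\mathcal{A}_1$ has vertex $(\tfrac{2n}{2n+1},\tfrac{1}{2n+1})$, and the \emph{new, sharp} vertex proved in Theorem \ref{thm single avg} is $(\tfrac{2n+1}{2n+2},\tfrac{1}{2n+2})$, which is strictly farther from the diagonal (the improving triangle gets \emph{bigger}, its vertex being governed by the homogeneous dimension $2n+2$). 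Consequently your central claim --- that the sparse triangle of Theorem \ref{thm sparse lac} is the exact image of the $L^p\to L^q$ triangle of Theorem \ref{thm single avg} under the correspondence $(1/p_1,1-1/p_2)\leftrightarrow(1/p,1/q)$ --- is false: the sparse triangle has vertex $(\tfrac{2n}{2n+1},\tfrac{1}{2n+1})$, which lies strictly \emph{inside} the triangle with vertex $(\tfrac{2n+1}{2n+2},\tfrac{1}{2n+2})$. If your "exact matching" argument were correct, it would yield the sparse bound on the larger triangle, contradicting the sharpness assertion accompanying Theorem \ref{thm sparse lac}.

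The missing idea is to identify where the loss between the two triangles occurs. The homogeneity/duality bookkeeping you describe (rescaling $\mathcal{A}_{2^k}$ to $\mathcal{A}_1$ against a cube with $|Q|\sim 2^{k(2n+2)}$) is indeed lossless, so the shrinkage cannot be explained by scaling alone. It enters through the other inputs of the Lacey-type recursion --- the continuity/H\"older estimates for $f\mapsto \mathcal{A}_1(f-\tau_y f)$ and the counterexamples that make the sparse range sharp --- which see the \emph{topological} dimension $2n+1$ of $\bbH^n$ rather than the homogeneous dimension $2n+2$ (note $\tfrac{2n}{2n+1}=\tfrac{d-1}{d}$ with $d=2n+1$, versus $\tfrac{Q-1}{Q}$ with $Q=2n+2$ for the single average). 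In the Euclidean setting $d=Q$ and the two triangles coincide, which is why this issue is invisible in \cite{laceyJdA19}; on $\bbH^n$ it is exactly the point your proposal glosses over, and any complete write-up must track it explicitly when invoking the arguments of \cites{GangulyThangavelu, BagchiHaitRoncalThangavelu}.
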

The above result is sharp up to the boundary and improves upon the main result in \cite{GangulyThangavelu}.

Similarly, by applying the reasoning in \cites{GangulyThangavelu, BagchiHaitRoncalThangavelu} and using the $L^p\to L^q$ bounds in Theorem \ref{thm:max full}, we can prove the following result which is new and also sharp up to the boundary.
\begin{thm}
\label{thm sparse full}
Let $\mathfrak{M}f(x):=\sum_{t>0}|\mathcal{A}_t f(x)|$ be the K\'oranyi global maximal function.
The sparse bound \[\label{sparse-bound full}  \int_{\bbH^n} \fM f(x) w(x) dx \le C\sup\big\{  \La_{\cS, p_1,p_2} (f,w) :\,{\cS\, \mathrm{sparse}} \big\}
\]
holds whenever $(1/p_1, 1-1/p_2)$ lies in the interior of the quadrilateral $\mathcal{R}$ in 
\eqref{quadrilateral} (or on the open line segment $Q_1Q_2$).
\end{thm}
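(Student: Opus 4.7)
The plan is to run the Lacey--style scheme for sparse domination of global maximal operators, as adapted to the Heisenberg setting in \cites{GangulyThangavelu, BagchiHaitRoncalThangavelu}, using Theorem \ref{thm:max full} as the crucial new input that replaces the weaker local bounds previously available.

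First, I would decompose the global operator across dyadic scales. Writing $\fM f(x) = \sup_{j\in\bbZ} M_{(j)} f(x)$ with $M_{(j)} f(x) := \sup_{t\in [2^j,2^{j+1}]}|\mathcal{A}_t f(x)|$, the non--isotropic homogeneity $\delta_t((\ux,\ox))=(t\ux,t^2\ox)$ of the Kor\'anyi norm, combined with the covariance of $\mu$ under these dilations, reduces the $L^p\to L^q$ estimate for $M_{(j)}$ on functions adapted to scale $2^j$ to the single--scale estimate of Theorem \ref{thm:max full} for $M=M_{(0)}$, modulo a scaling factor involving the homogeneous dimension $Q=2n+2$ of $\bbH^n$. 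This places a scale--indexed family of local maximal operators at our disposal, each obeying the sharp $L^p\to L^q$ improving bounds in the interior of $\cR$, on the open segment $(Q_2,Q_3)$, and on the half--open segment $[Q_1,Q_2)$.

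Next, I would carry out the Lacey--type stopping--time argument on a system of Heisenberg dyadic cubes, as constructed in \cite{GangulyThangavelu}. Fix a bounded, compactly supported $f$, fix a Kor\'anyi cube $Q_0$ containing its support, and let the maximal function act locally at the scale of each cube. Form the principal family $\cS$ by selecting maximal dyadic subcubes $Q'\subset Q$ on which either the $p_1$--mean of $|f|$ significantly exceeds that on $Q$ or the scale--restricted maximal function is large on a sizable portion of $Q'$. Iterating the resulting $\fM f \cdot \bbone_Q \le (\fM f)\bbone_{Q\setminus \cup Q'} + \sum_{Q'} \fM f\cdot \bbone_{Q'}$ decomposition and pairing with $w$ produces the claimed sparse form, provided the good contribution on each $Q$ of scale $2^j$ is controlled by the local $L^{p_1}\to L^{p_2'}$ bound for $M_{(j)}$. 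The admissible exponents $(p_1,p_2)$ are thus precisely those for which $(1/p_1,1/p_2')=(1/p_1,1-1/p_2)$ lies in the $L^p\to L^q$ region of $M$ supplied by Theorem \ref{thm:max full}, which gives exactly the region in the statement.

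The main obstacle is balancing scales: one must sum the local contributions after rescaling, which requires the $L^{p_1}\to L^{p_2'}$ operator norm at scale $2^j$ to exhibit decay (or controlled growth) in $j$ sufficient to close the iteration. This is where the strict $L^p$--improving nature of Theorem \ref{thm:max full} in the interior of $\cR$, together with the weak $(p_1,p_1)$ behavior along $[Q_1,Q_2)$, is essential; the $2^{k\epsilon}$ loss in Proposition \ref{prop:Akmax}(iii) prevents an endpoint restricted weak type bound at $Q_4$, which is precisely why the sparse conclusion is stated only for the interior of $\cR$ (plus the open diagonal $Q_1Q_2$) rather than its closure. Modulo these local estimates, the construction of the sparse family, the Calder\'on--Zygmund type decomposition over Heisenberg cubes, and the iterative passage to the sparse form all proceed exactly as in \cites{GangulyThangavelu, BagchiHaitRoncalThangavelu}.
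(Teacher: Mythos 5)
Your proposal is correct and follows essentially the same route as the paper, which proves Theorem \ref{thm sparse full} precisely by invoking the Lacey-type stopping-time scheme over Heisenberg dyadic cubes from \cites{GangulyThangavelu, BagchiHaitRoncalThangavelu} (cf.\ also \cites{laceyJdA19, BeltranRoosSeeger, conde2021metric}) with the local $L^p\to L^q$ bounds of Theorem \ref{thm:max full} as the new input; the paper gives no further details beyond this citation. Your rescaling remarks and the identification of the admissible exponent region with the interior of $\cR$ (plus the open diagonal) match the intended argument.
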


The proof of sparse bounds for the global maximal operator by using $L^p\to L^q$ estimates for localized maximal functions was pioneered by Lacey \cite{laceyJdA19} in his work on the Euclidean spherical maximal function.  
The recent papers \cite{BeltranRoosSeeger} and \cite{conde2021metric} give very general results about this correspondence for Euclidean spaces and spaces of homogeneous type, respectively.

\section{Appendix: Translation Invariance}
\label{section trans invar}
Let $\Theta:\bbR^{2n+1}\times\bbR^{2n+1}\to \bbR^{2n+1}$ denote the Heisenberg group translation map
\begin{equation*}
    \label{eq H law}
    \Theta(x,y):=
    \left(\ux-\uy,\ox-\oy+\tfrac{1}{2}\ux^{\intercal}J\uy\right).
\end{equation*}
Let $\Phi$ be a smooth real valued function on $\bbH^n\times[1,2]$. For our purpose, we shall think of $\Phi$ as a function from $\bbR^{2n+1}\times [1,2]\to\bbR$.  Let $\Tilde{\Phi}:\bbR^{2n+1}\times[1,2]\times\bbR^{2n+1}\to\bbR$ be defined as 
\begin{equation}
    \label{eq gen func defn}
    \Tilde{\Phi}(x,t,y):=\Phi(\Theta(x, y), t)=\Phi(\ux-\uy,\ox-\oy+\tfrac{1}{2}\ux^{\intercal}J\uy,t).
\end{equation}
We prove the following lemma which says that the rank of the curvature matrices associated to the oscillatory integral operator corresponding to the phase function $\Tilde{\Phi}$ is invariant under group translation.
\begin{lemma}
\label{lem trans invar}
Let $\Tilde{\Phi}$ be as defined above and let $t\in [1,2]$.

(i) \[\rank\Tilde{\Phi}''_{x,y}(x, t, y)=\rank\Tilde{\Phi}''_{x,y}(0,t,\Theta(x,y).\]

(ii) Let $\mathscr{C}$ be the cinematic curvature matrix for $\Tilde{\Phi}$ as defined in \eqref{eq curv matrix eq} with respect to a unit normal vector $N$. Then
\[\rank \mathscr{C}(x,t,y)=\rank \mathscr{C}(0,t,\Theta(x,y)).\]
\end{lemma}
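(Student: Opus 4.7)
The key observation is the invariance of the map $\Theta$ under simultaneous left Heisenberg translation: for every $a \in \bbH^n$,
\[\Theta(a\cdot x, a\cdot y) = \Theta(x, y).\]
This I would verify by direct expansion. Writing $a\cdot x = (\ubar{a}+\ubar{x},\, \bar{a}+\bar{x}+\tfrac{1}{2}\ubar{a}^{\intercal}J\ubar{x})$ and similarly for $a\cdot y$, the $\bar{a}$-contributions in the vertical coordinates of $a\cdot x$ and $a\cdot y$ cancel, while the $\ubar{a}$-dependent cross terms in $\tfrac{1}{2}(\ubar{a}+\ubar{x})^{\intercal}J(\ubar{a}+\ubar{y})$ cancel against $\tfrac{1}{2}\ubar{a}^{\intercal}J\ubar{x}$ and $\tfrac{1}{2}\ubar{a}^{\intercal}J\ubar{y}$ by skew-symmetry of $J$ (using $\ubar{a}^{\intercal}J\ubar{a}=0$ and $\ubar{a}^{\intercal}J\ubar{y}=-\ubar{y}^{\intercal}J\ubar{a}$). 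As an immediate consequence, $\Tilde{\Phi}(a\cdot x, t, a\cdot y) = \Tilde{\Phi}(x, t, y)$ for all $a$.

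For part (i), given $(x_0, y_0)$ I would take $a=x_0^{-1}$, so that the map $L_a:(x,y)\mapsto (a\cdot x, a\cdot y)$ sends $(x_0,y_0)$ to $(0, x_0^{-1}\cdot y_0)$. The Jacobians $DL_a^x$ and $DL_a^y$ are \emph{constant} block-lower-triangular matrices of determinant one, hence invertible. Differentiating the identity $\Tilde{\Phi} = \Tilde{\Phi}\circ L_a$ once in $x$ and once in $y$, the chain rule (together with the vanishing of second derivatives of the affine map $L_a$) yields the matrix identity
\[\Tilde{\Phi}''_{x,y}(x_0, t, y_0) = (DL_a^x)^{\intercal}\, \Tilde{\Phi}''_{X,Y}(0, t, x_0^{-1}\cdot y_0)\, DL_a^y,\]
so the ranks on the two sides coincide. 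Observing that $y_0^{-1}\cdot x_0 = \Theta(x_0, y_0)$, the alternative choice $a = y_0^{-1}$ gives the companion reduction linking $(x_0, y_0)$ to $(\Theta(x_0, y_0), 0)$; either of these normalizations reduces the verification of the curvature condition to the case when the first argument vanishes, which is precisely how the lemma is used in \S\ref{sec eq im}.

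For part (ii), the same translation identity, differentiated once in the $(x,t)$ variables, gives $\Xi(x,t,y) = \mathcal{L}^{\intercal}\,\Xi(a\cdot x, t, a\cdot y)$, where $\mathcal{L}$ is the block-diagonal matrix combining $DL_a^x$ with the trivial $t$-direction. Hence the tangent vectors $\{\Xi_j\}_{j=1}^{2n+1}$ at $(x_0,t,y_0)$ are the image under an invertible linear map of those at $(0, t, a\cdot y_0)$, and a unit normal $N$ to the former span corresponds (up to normalization) to the normal $\mathcal{L}N$ at the latter. Since $L_a$ is affine in $y$, its second $y$-derivatives vanish, and the cinematic curvature matrix transforms as
\[\mathscr{C}(x_0, t, y_0) = (DL_a^y)^{\intercal}\,\mathscr{C}(0, t, a\cdot y_0)\, DL_a^y,\]
where the right-hand side is computed with respect to the rescaled unit normal $\mathcal{L}N/|\mathcal{L}N|$. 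Rescaling the normal only multiplies $\mathscr{C}$ by a nonzero scalar, and conjugation by the invertible $DL_a^y$ preserves rank, so the rank identity follows. The main technical subtlety is the careful bookkeeping of how the unit normal transforms in part (ii); part (i) is a direct consequence of the chain rule once the translation invariance of $\Theta$ is in hand.
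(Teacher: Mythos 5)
Your argument is correct, and at bottom it runs on the same mechanism as the paper's proof: Heisenberg translations are affine maps with constant, unimodular, block-triangular Jacobians, so the chain rule converts the reduction to $x=0$ into multiplication of the mixed Hessian (resp.\ congruence of the curvature form) by invertible matrices. The packaging, however, is genuinely different and arguably cleaner. The paper differentiates $\Tilde{\Phi}=\Phi\circ\Theta$ directly and reads the factorization of $\Tilde{\Phi}''_{x,y}$ off the explicit Jacobian of $\Theta$; you instead isolate the identity $\Theta(a\cdot x,a\cdot y)=\Theta(x,y)$ (equivalently, $\Theta(x,y)=y^{-1}\cdot x$ together with left invariance), after which the lemma becomes an instance of the transformation law for Hessians and second fundamental forms under an affine unimodular change of variables. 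This buys two things: the dependence of the transformed matrices on the base point is handled automatically rather than by inspection of an explicit matrix, and in part (ii) you track explicitly how the normal vector transforms (to $\mathcal{L}N/|\mathcal{L}N|$) and why the resulting congruence $(DL^y_a)^{\intercal}\,\mathscr{C}\,DL^y_a$, combined with the nonzero scalar renormalization of $N$, preserves rank --- a point the paper's one-sided row computation leaves implicit.

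One caveat on the literal statement: with $a=x_0^{-1}$ your normalization lands at $(0,t,x_0^{-1}\cdot y_0)$, and since $\Theta(x,y)=y^{-1}\cdot x$ one has $x_0^{-1}\cdot y_0=-\Theta(x_0,y_0)$ rather than $\Theta(x_0,y_0)$. You flag this and correctly note that either normalization accomplishes what the lemma is used for in \S\ref{sec eq im}, namely reducing the verification of the curvature conditions to $x=0$ over the full relevant range of $y$; so this is a harmless labelling discrepancy (one the paper's own bookkeeping shares) rather than a gap.
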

\begin{proof}
Differentiating \eqref{eq gen func defn} and using the chain rule, we have \[\nabla_{(x,t)}\Tilde{\Phi}(x,t,y)=\nabla \Phi\big|_{(\Theta(x,y),t)}\cdot\begin{pmatrix}
\Theta_{(x,t)}'& & 0\\
0& & 1
\end{pmatrix}=\nabla \Phi\big|_{(\Theta(x,y),t)}\cdot\begin{pmatrix}
I_{2n}& & 0& & 0\\
\tfrac{1}{2}(J\uy)^{\intercal}& & 1& & 0\\
0 & & 0 & & 1
\end{pmatrix}.\]
Let $d=2n+1$. Expanding the matrix product on the right gives
\[\nabla_{(x,t)}\Tilde{\Phi}(x,t,y)=\left(\nabla_{2n}\Phi+\tfrac{1}{2}\Phi'_{d}J\uy,\Phi'_{d},\Phi'_{t}\right)\big|_{(\Theta(x,y),t)}.\]
Here $\nabla_{2n}\Phi$ denotes the partial derivative of $\Phi$ with respect to the first $2n$ coordinates, while $\Phi'_{d}, \Phi'_{t}$ denote its derivatives with respect to the $2n+1$-th and the last (time) coordinates respectively.

Another application of chain rule yields 
\begin{equation}
    \label{eq first deriv cov}
    \Tilde{\Phi}''_{(x,t),y}(x,t,y)=-\begin{pmatrix}
D_{2n}^2\Phi+\tfrac{1}{2}\nabla_{2n}\Phi'_{d}(J\uy)^{\intercal}+\tfrac{1}{2}\Phi_{d}'J& &\nabla_{2n}\Phi'_{d}+\tfrac{1}{2}\Phi''_{dd}J\uy\\
\nabla_{2n}\Phi'_{d}& &\Phi''_{dd}\\
\nabla_{2n}\Phi'_{t}& &\Phi''_{tt}
\end{pmatrix}\Bigg|_{(\Theta(x,y),t)}\begin{pmatrix}
I_{2n}& & 0\\
\tfrac{1}{2}(J\ux)^{\intercal}& & 1
\end{pmatrix}.
\end{equation}
Here $D_{2n}^2$ denotes the Hessian of $\Phi$ with respect to the first $2n$ coordinates. In particular, for a fixed $t$, the mixed Hessian of $\Tilde{\Phi}$ at $(x,y)$ is given by 
\begin{equation*}
\Tilde{\Phi}''_{x,y}(x, t, y)=-\begin{pmatrix}
D_{2n}^2\Phi+\tfrac{1}{2}\nabla_{2n}\Phi'_{d}(J\uy)^{\intercal}+\tfrac{1}{2}\Phi_{d}'J& &\nabla_{2n}\Phi'_{d}+\tfrac{1}{2}\Phi''_{dd}J\uy\\
\nabla_{2n}\Phi'_{d}& &\Phi''_{dd}
\end{pmatrix}\bigg|_{(\Theta(x,y),t)}\begin{pmatrix}
I_{2n}& & 0\\
\tfrac{1}{2}(J\ux)^{\intercal}& & 1
\end{pmatrix}.    
\end{equation*}

Since the second matrix in the product on the right hand side has determinant equal to one, the rank of $\Tilde{\Phi}''_{x,y}(x,t,y)$ is the same as the rank of 
\begin{align*}
&-\begin{pmatrix}
D_{2n}^2\Phi+\tfrac{1}{2}\nabla_{2n}\Phi'_{d}(J\uy)^{\intercal}+\tfrac{1}{2}\Phi_{d}'J& &\nabla_{2n}\Phi'_{d}+\tfrac{1}{2}\Phi''_{dd}J\uy\\
\nabla_{2n}\Phi'_{d}& &\Phi''_{dd}
\end{pmatrix}\bigg|_{(\Theta(x,y),t)}\\
&=-\begin{pmatrix}
D_{2n}^2\Phi+\tfrac{1}{2}\nabla_{2n}\Phi'_{d}(J\uy)^{\intercal}+\tfrac{1}{2}\Phi_{d}'J& &\nabla_{2n}\Phi'_{d}+\tfrac{1}{2}\Phi''_{dd}J\uy\\
\nabla_{2n}\Phi'_{d}& &\Phi''_{dd}
\end{pmatrix}\bigg|_{(\Theta(x,y),t)}\begin{pmatrix}
I_{2n}& & 0\\
0& & 1
\end{pmatrix}\\
&=\Tilde{\Phi}''_{x,y}(0,t,\Theta(x,y)).   
\end{align*}
This proves the first part. For the second part, we consider the surface \[\Sigma_{(x,t)}:=\nabla_{(x,t)}\Tilde{\Phi}(x,t,y).\] The tangent space of $\Sigma_{(x,t)}$ at a point $y$ is spanned by the column vectors of the matrix $\Tilde{\Phi}''_{(x,t),y}(x,t,y)$. Observe that \eqref{eq first deriv cov} implies that the column vectors of the matrix \[\begin{pmatrix}
D_{2n}^2\Phi+\tfrac{1}{2}\nabla_{2n}\Phi'_{d}(J\uy)^{\intercal}+\tfrac{1}{2}\Phi_{2n+1}'J& &\nabla_{2n}\Phi'_{d}+\tfrac{1}{2}\Phi''_{dd}J\uy\\
\nabla_{2n}\Phi'_{d}& &\Phi''_{dd}\\
\nabla_{2n}\Phi'_{t}& &\Phi''_{tt}
\end{pmatrix}\Bigg|_{(\Theta(x,y),t)}.\]
also form a basis for the tangent space to $\Sigma_{(x,t)}$ at $y$.
For $1\leq j\leq 2n+1$, let $\Xi_j$ denote the $j$-th column vector of the matrix above, and let $N\in \bbR^{2n+2}$ be a unit vector normal to $\Sigma_{(x,t)}$, so that $\langle N, \Xi_j\rangle=0$ for $1\leq j\leq 2n+1$.

Recall that the cinematic curvature matrix $\mathscr{C}(x,t,y)$ with respect to $N$ is a $(2n+1)\times (2n+1)$ dimensional matrix with the $(i,j)$-th entry given by $\left\langle N, \frac{\partial}{\partial y_i}\Xi_j(\Theta(x,y),t)\right\rangle$. We use subscript $i$ with $i\in \{1, \ldots, 2n+1\}$ to denote partial derivative in the direction of $e_i$. By the chain rule, the $i$-th row of $\mathscr{C}(x,t,y)$ is given by
\begin{equation*}
    N^{\intercal}\cdot\begin{pmatrix}
\mathscr{A}& &\mathscr{B}\\
\nabla_{2n}\Phi''_{di}& &\Phi'''_{ddi}\\
\nabla_{2n}\Phi''_{ti}& &\Phi'''_{tti}
\end{pmatrix}\Bigg|_{(\Theta(x,y),t)}
% Fully expanded matrix
% \begin{pmatrix}
% D_{2n}^2\Phi'_i+\nabla_{2n}\Phi''_{di}(J\uy)^{\intercal}+\nabla_{2n}\Phi'_{d}(Je_i)^{\intercal}+\Phi_{di}''J& &\nabla_{2n}\Phi''_{di}+\Phi'''_{ddi}J\uy+\Phi''_{dd}Je_i\\
% \nabla_{2n}\Phi''_{di}& &\Phi'''_{ddi}\\
% \nabla_{2n}\Phi''_{ti}& &\Phi'''_{tti}
% \end{pmatrix}\Bigg|_{(\Theta(x,y),t)}
\begin{pmatrix}
I_{2n}& & 0\\
\tfrac{1}{2}(J\ux)^{\intercal}& & 1
\end{pmatrix},
\end{equation*}
with $\mathscr{A}:=D_{2n}^2\Phi'_i+\tfrac{1}{2}\nabla_{2n}\Phi''_{di}(J\uy)^{\intercal}+\tfrac{1}{2}\nabla_{2n}\Phi'_{d}(Je_i)^{\intercal}+\tfrac{1}{2}\Phi_{di}''J$ and $\mathscr{B}:=\nabla_{2n}\Phi''_{di}+\tfrac{1}{2}\Phi'''_{ddi}J\uy+\tfrac{1}{2}\Phi''_{dd}Je_i$.

Observe that 
\[
 N^{\intercal}\cdot\begin{pmatrix}
\mathscr{A}& &\mathscr{B}\\
\nabla_{2n}\Phi''_{di}& &\Phi'''_{ddi}\\
\nabla_{2n}\Phi''_{ti}& &\Phi'''_{tti}
\end{pmatrix}\Bigg|_{(\Theta(x,y),t)}   
= N^{\intercal}\cdot\begin{pmatrix}
\mathscr{A}& &\mathscr{B}\\
\nabla_{2n}\Phi''_{di}& &\Phi'''_{ddi}\\
\nabla_{2n}\Phi''_{ti}& &\Phi'''_{tti}
\end{pmatrix}\Bigg|_{(\Theta(x,y),t)}\begin{pmatrix}
I_{2n}& & 0\\
0& & 1
\end{pmatrix},
\]
which is the $i-$th row of the matrix $\mathscr{C}(0,t,\Theta(x,y))$. Thus we have
\[\mathscr{C}(x,t,y)=\mathscr{C}(0,t,\Theta(x,y))\begin{pmatrix}
I_{2n}& & 0\\
\tfrac{1}{2}(J\ux)^{\intercal}& & 1
\end{pmatrix}.\]
This clearly implies that the rank of the cinematic curvature matrix for $\Tilde{\Phi}$ at $(x,t,y)$ is equal to the cinematic curvature matrix rank at $(0,t,\Theta(x,y)).$
\end{proof}
\bibliography{ref}
\end{document}